\newcommand{\tsum}{{\textstyle\sum}}
\title{Special triple covers of algebraic surfaces}
\author{Nicolina Istrati}
\address{Nicolina Istrati\\FB 12/Mathematik und Informatik\\
Philipps-Universit\"at Marburg\\
Hans-Meer\-wein-Str. 6\\
35032 Marburg\\
Germany}
\email{nicolina.istrati@uni-marburg.de}
\author{Piotr Pokora}
\address{Department of Mathematics,
Pedagogical University of Krakow,
Podchor\c a\.zych 2,
PL-30-084 Krak\'ow, Poland}
\email{piotr.pokora@up.krakow.pl, piotr.pokora@up.krakow.pl}
\author{S\"onke Rollenske}
\address{S\"onke Rollenske\\FB 12/Mathematik und Informatik\\
Philipps-Universit\"at Marburg\\
Hans-Meerwein-Str. 6\\
35032 Marburg\\
Germany}
\email{rollenske@mathematik.uni-marburg.de}
\begin{document}
\begin{abstract}
We  study \emph{special triple covers} $f\colon T \to S$ of algebraic surfaces, where the Tschirnhausen bundle $\ke = \left(f_*\ko_T/\ko_S\right)^\vee$ is a quotient of a split rank three vector bundle, and we provide several necessary and sufficient criteria for the existence.

As an application, we give a complete classification of special triple planes, finding among others two nice families of K3 surfaces.

\end{abstract}
\subjclass[2010]{14J10, 14J29}
\keywords{triple covers, K3 surfaces, surface of general type}

\maketitle
\setcounter{tocdepth}{1}
\tableofcontents

\section{Introduction}
Every algebraic geometer needs his toolbox of methods to construct new algebraic varieties. One of the most prominent of those is taking finite branched covers  $f\colon T \to S$, possibly followed by desingularisation. Here we mostly restrict to the case where both $S$ and $T$ are smooth. Then the finite map $f$  is determined by the $\ko_S$-algebra structure on $f_*\ko_T$, so  a major role is played by the Tschirnhausen bundle $\ke = \left(f_*\ko_T/\ko_S\right)^\vee$. 

The easiest case  is when $f$ is an  abelian Galois cover, for then the symmetries force $\ke$ to split into line bundles and the multiplication maps can be specifed in terms of divisors, see e.g. \cite{pardini91, catanese99}. This includes all maps of degree $2$. The non-abelian case, which is much more challenging, has also attracted some interest in recent years in different contexts, as for example in \cite{MR3660905,MR4334321,MR2866071,MR3443036, Tokunga}.

General finite maps have been studied in low degrees by Miranda \cite{Miranda, Hahn-MIranda1999} and by Casnati and Ekedahl in a series of papers  starting with  \cite{CE96}. Roughly speaking, the theory becomes more complicated in two ways: the Tschirnhausen bundle is no longer split and the description of the multiplication map is less straightforward.

For covers of  degree three the bundle $\ke$ is of rank two and the multiplication map is encoded by a section  $\eta \in H^0(S, \Sym^3 \ke \tensor \det \ke ^\vee)$ -- see Section \ref{sect: triple covers}. This situation is reasonably approachable if one can control the Tschirnhausen bundle. In general, this can be quite difficult but has been successfully carried out in special cases, e.g. \cite{MR3885155, MR1768173, MR3128997}. In the context of triple covers, it is also worth recalling some recent work regarding the classification of trigonal surfaces which uses generically finite triple covers over some ruled surface, not necessarily smooth, via cubic equations, as  explained in  \cite{MR1926442}.

In this paper we explore \emph{special triple covers}, corresponding to the case when the Tschirn\-hausen bundle is what we call nearly split, i.e., it is a quotient of a direct sum of three line bundles. This notion, introduced and characterised in Section \ref{sect: VB}, includes the split case but allows for extra flexibility in the bundle and extra challenges in the construction of the defining section $\eta$.

After recalling the general theory of triple covers and the case of split Tschirn\-hausen bundle in Section \ref{sect: triple covers}, we specialise to special triple covers in Section \ref{sect: special triple covers} and deduce their invariants in terms of the building data of the bundle $\ke$.  

In Section \ref{sect: suffcient and necessary conditions}, we derive several necessary and sufficient criteria for the existence of triple covers with a given nearly split $\ke$.  When the building data of the Tschirnhausen bundle is commensurable, we obtain a rather complete picture regarding admissibility in Theorem \ref{thm: gci existence}. An instance of this result for the case $S=\IP^2$ reads as follows.
\begin{custom}[Theorem \ref{thm: admissible triple planes}]
Let $\ke$ be a rank two vector bundle on $\IP^2$ sitting in an exact sequence:
\begin{equation}\label{eq: e}
 \begin{tikzcd}
  0 \rar & \ko(d)\rar{(\gamma_1,\gamma_2,\gamma_3)} & \bigoplus_{i=1}^3\ko(d+c_i)\rar & \ke\rar& 0
 \end{tikzcd} \end{equation}
 with $c_1\geq c_2\geq c_3\geq 1$. Then $\ke$ is the Tschirnhausen bundle of a smooth triple cover $T\rightarrow \IP^2$ if and only if one of the following conditions holds:
 \begin{enumerate}
 \item $d+c_3-c_1\geq 0$
 \item $d+c_3-c_2\geq 0$, $d-c_1+2c_2-c_3\geq 0$ and the curve $Z(\gamma_3)$ is smooth at the points $Z(\gamma_2,\gamma_3)$.
 \end{enumerate}
\end{custom}
 
As a first application, we complement  recent work of Garbagnati and Penegini \cite{GP21}, by giving in Example \ref{exam: covers of K3} several simple but non-split triple covers of K3 surfaces, both properly elliptic and of general type. 

If the Tschirnhausen bundle is sufficiently positive, then the corresponding triple cover should be minimal of general type.  We show how this can be controlled and where such  surfaces appear in the geography of surfaces of general type in Section~\ref{sect: aymptotics}. However, we also point out that a certain balance has to be maintained in the bundle, otherwise it can not be realised as a Tschirnhausen bundle. 

To showcase the applicability of our results, we treat special triple planes $f\colon T \to \IP^2$ exhaustively in Section \ref{sect: triple planes}.
\begin{table}[t]
  \caption{Special triple planes not minimal of general type}
    \label{tab: special triple planes not of gen type}
\centering
\begin{tabular}{cccc|ccc}
 \toprule
 $\kappa(T)$ &$p_g(T)$ & $q(T)$ & $K_T^2$  & $d$ & $( c_1, c_2, c_3)$ \\
 \midrule
$-\infty$ & $0$ & $0$ & $8$ &  $0$ & $(1,1,0)$ \\
$-\infty$ &  $0$ & $0$ & $3$ & $0$ & $(2,1,0)$ \\
$-\infty$ & $0$ & $0$ & $-1$ &  $0$ & $(2,2,0)$\\
$-\infty$ & $0$ & $1$ &  $0$ &  $0$ & $(1,1,1)$ \\
$-\infty$ & $0$ & $1$ &  $-9$&  $0$ & $(2,2,2)$ \\
$-\infty$ & $0$ & $0$ &  $-4$ & $1$ & $(1,1,1)$\\
\midrule
$0$ &  $1$ & $0$ & $-1$ & $0$ & $(3,2,0)$ \\
$0$ & $1$ & $0$ &  $-3$ &  $1$ & $(2,1,1)$ \\
 \midrule
$1$ & $2$ & $0$ & $0$ &  $0$ & $(3,3,0)$ \\
$1$ & $2$ & $0$ &  $-1$ &  $1$ & $(2,2,1)$  \\
$1$ & $3$ & $1$ &  $0$ &  $0$ & $(3,3,3)$ \\
 \midrule 
$2$  &  $4$ & $0$ &  $5$ &  $1$ & $(3,2,1)$ \\
$2$ & $7$ & $0$ &  $15$ &  $1$ & $(4,2,1)$ \\
    \bottomrule
 \end{tabular}
  \end{table}

\begin{custom}[Theorem \ref{prop: triple planes not general type} and Proposition \ref{prop: triple planes non min of gen type}]
Let $f\colon T\rightarrow \IP^2$ be a special triple cover, so that its Tschirnhausen bundle $\ke$ sits in an exact sequence \eqref{eq: e} with $c_1\geq c_2\geq c_3\geq 0$. It $T$ is not minimal of general type, then it is in one of the families given in Table \ref{tab: special triple planes not of gen type}.
\end{custom} 

In the rest of the section, we do a detailed study of each family appearing in the above table. Especially the two families of K3 surfaces admit a nice alternative description, see Example~\ref{exam: quartic} and Section~\ref{sect: K3 family }. 

\newpage
\subsection*{Acknowlegements}
The suggestion to study special triple covers arose in Fynn Brunzel's Master thesis \cite{Fynn}\footnote{In the notation of this paper, he studied  special triple planes with $(c_1, c_2, c_3) = (c_1, 1,1)$, where the ideal sheaf in \eqref{eq: Serre seq for E} is the ideal sheaf of a point.}.
Significant simplifications where found when the third author was visiting Krakow. We would also like to thank \L ucja Farnik for discussions.

The second author was partially supported by the National Science Center (Poland) Sonata Grant Nr \textbf{2018/31/D/ST1/00177}.

\subsection*{Notation and conventions}
We work over a fixed smooth complex projective or just compact complex surface $S$. A canonical divisor is denoted by $K_S$, the canonical bundle with $\omega_S$. 
With $\chi(S) = \chi(\ko_S)$ we denote the holomorphic Euler characteristic.

\section{Nearly split rank two bundles}\label{sect: VB}

Recall that a vector bundle is called split if it is a direct sum of line bundles. We now want to consider, for rank two vector bundles,  the next best case, where we can find a surjection from three line bundles.

\begin{prop}\label{prop: nearly split E}
 Let $\ke$ be a vector bundle of rank $2$ on $S$. Assume that there exist three line bundles $\kl_1, \kl_2, \kl_3$ and  a surjection 
 $\pi \colon \kl_1\oplus \kl_2\oplus \kl_3 \onto \ke$.
 
Then there exists a line bundle $\kl$ and three curves $C_1, C_2, C_3$ such that 
\begin{enumerate}
 \item no two curves have a common component,
 \item the triple intersection $C_1\cap C_2\cap C_3 = \emptyset$,
 \item there is an exact sequence
 \begin{equation}\label{eq: split resolution of E}
 \begin{tikzcd}
  0 \rar & \kl\rar{\alpha} & \bigoplus\kl(C_i)\rar & \rar\ke& 0,
 \end{tikzcd}
\end{equation}
where $\alpha$ is given by multiplication with sections defining the curves $C_i$.
\end{enumerate}
Conversely, given a line bundle and three curves satisfying $(i)$ and $(ii)$, there exists a bundle $\ke$ as in $(iii)$.

We call $\ke$ the \emph{nearly split vector bundle} associated to $(\kl, C_1, C_2, C_3)$.
\end{prop}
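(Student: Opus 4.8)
The plan is to read $(\kl,C_1,C_2,C_3)$ off the kernel of $\pi$ and then remove common components by a sequence of elementary modifications. First I would set $\kl:=\ker\pi$: the kernel of a surjection of locally free sheaves is again locally free (the sequence splits locally), and a rank count forces it to have rank one, so $\kl$ is a line bundle and the inclusion $\kl\hookrightarrow\kl_1\oplus\kl_2\oplus\kl_3$ is a triple of maps $\alpha_i\colon\kl\to\kl_i$. If some $\alpha_i$ is identically zero then $\kl_i$ splits off $\ke$, so $\ke$ is a direct sum of two line bundles; in that degenerate case I would exhibit $(\kl,C_1,C_2,C_3)$ by hand, taking for instance $C_1=\emptyset$ and $C_2,C_3$ general members of suitable twists of the two summands. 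Otherwise all $\alpha_i\ne0$, each $\alpha_i$ is multiplication by a section cutting out an effective divisor $C_i:=Z(\alpha_i)$, and $\kl_i=\kl(C_i)$; this is exactly the sequence \eqref{eq: split resolution of E}, giving $(iii)$.

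Condition $(ii)$ then comes for free: if a point lay on all three $C_i$ the map $\alpha=(\alpha_1,\alpha_2,\alpha_3)$ would vanish there, and the fibre of $\ke=\operatorname{coker}\alpha$ would jump to dimension three, contradicting local freeness. So $C_1\cap C_2\cap C_3=\emptyset$.

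Arranging $(i)$ is the heart of the argument, and the step I expect to require the most care. I would induct on the number $N$ of common components (with multiplicity) of the three pairs. If $N>0$, say $C_1$ and $C_2$ share a component, set $D:=\gcd(C_1,C_2)$, so $D\le C_1$, $D\le C_2$; by $(ii)$ the support of $D$ misses $C_3$, hence $\alpha_3$ is nowhere zero along $D$. Writing $\pi=(\pi_1,\pi_2,\pi_3)$, the identity $\pi_1\alpha_1+\pi_2\alpha_2+\pi_3\alpha_3=0$ restricted to $D$ gives $\pi_3|_D\circ(\alpha_3|_D)=0$ (as $\alpha_1,\alpha_2$ vanish along $D$), and since $\alpha_3|_D$ is an isomorphism we get $\pi_3|_D=0$; thus $\pi_3$ factors through $\ke(-D)\hookrightarrow\ke$, i.e. extends to $\hat\pi_3\colon\kl_3(D)\to\ke$. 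The map $(\pi_1,\pi_2,\hat\pi_3)\colon\kl_1\oplus\kl_2\oplus\kl_3(D)\onto\ke$ is still surjective, a determinant computation identifies its kernel with $\kl(D)$, and tracing the component maps shows the new building data is $(\kl(D),\,C_1-D,\,C_2-D,\,C_3)$, again with all component sections nonzero. Since $\gcd(C_1-D,C_2-D)=0$ and each new curve is $\le$ the corresponding old one, no pair acquires new common components, so $N$ strictly drops; the induction terminates at $N=0$, which is $(i)$.

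For the converse, given $(\kl,C_1,C_2,C_3)$ satisfying $(i)$ and $(ii)$, I would pick defining sections $s_i$ of the $C_i$, form $\alpha=(s_1,s_2,s_3)\colon\kl\to\bigoplus\kl(C_i)$, which is injective as the $s_i$ are nonzero, and set $\ke:=\operatorname{coker}\alpha$. This has rank two and is locally free precisely because $\alpha$ is nowhere vanishing, equivalently the $s_i$ have no common zero — which is $(ii)$. Note that $(i)$ plays no role in this direction; it is retained because it is part of the normal form produced in the first part.
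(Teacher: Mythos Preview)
Your proposal is correct and follows the same overall architecture as the paper: take $\kl=\ker\pi$, read off the $C_i$ from the component maps, deduce $(ii)$ from local freeness of $\ke$, and then iteratively strip common components to obtain $(i)$. The converse direction is handled identically.

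The one tactical difference worth noting is in the removal of a common component $D$. The paper quotients out the ``third'' summand $\kl(C_1)$, obtains $\kq=\ke/\kl(C_1)$, and uses the (twisted) Koszul resolution of the complete intersection to identify the torsion of $\kq$ as $\kl(D)|_D$; this forces $\kl(C_1)\to\ke$ to vanish along $D$ and hence to factor through $\kl(C_1+D)$. You instead use the syzygy $\pi_1\alpha_1+\pi_2\alpha_2+\pi_3\alpha_3=0$ directly: restricting to $D$ kills the first two terms, and invertibility of $\alpha_3|_D$ (guaranteed by $(ii)$) gives $\pi_3|_D=0$, whence $\pi_3$ factors through $\kl_3(D)$. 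Your route is more elementary and avoids the Koszul diagram entirely; the paper's route has the side benefit of setting up the sequence \eqref{eq: Serre seq for E} and the resolution of $\ki_Z$ which are used repeatedly later. You also explicitly treat the degenerate case $\alpha_i\equiv 0$ (reducing to a split $\ke$), which the paper's proof passes over silently; your handling there is a little sketchy---you should make precise that one may choose $\kl$ antiample enough so that the needed linear systems are nonempty, or simply take one further $C_j=\emptyset$---but it is easily completed.
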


\begin{rem}\label{rem: split}
It may well be that a nearly split vector bundle in our sense also admits a splitting into line bundles. For example, if in the construction the curve $C_3$ is empty, then the sequence \eqref{eq: split resolution of E} is split exact. In other words, split bundles are also nearly  split. 
\end{rem}

\begin{proof}
First note that given a sequence \eqref{eq: split resolution of E} the quotient $\ke$ is a vector bundle if and only if the map has everywhere full rank if and only if $(ii)$ is satisfied. 

Now assume the surjection $\pi$ is given and let $\kl = \ker \pi$, which is a line bundle, because it is reflexive of rank $1$,  and let  $C_i$ be the divisor defining the map $\kl\to \kl_i$.  
 
 Then there is an exact sequence \eqref{eq: split resolution of E}, and the triple intersection is zero because $\ke$ is a vector bundle. 

 Now assume that $C_2 = C_2'+D$ and $C_3 =C_3'+D$ such that $C'_2$ and $C'_3$ have no common component. Then if we factor out $\kl_1 = \kl(C_1)$ we get a diagram
 \begin{equation}\label{diag: resolution / Serre seq / koszul res}
 \begin{tikzcd}[ampersand replacement  = \&]
 {} \&  \&   0\dar \&  0\dar \\    
 \& \&  \kl
  \dar
 \rar[equal] \& \kl\dar{\mat{\delta\gamma_2 & \delta\gamma_3}^T} \\
  0\rar \&  \kl(C_1) \rar\dar[equal] \&  \kl(C_1) \oplus \kl (C_2) \oplus \kl(C_3) \dar\rar \& \kl (C_2) \oplus \kl(C_3) \rar\dar{\mat{-\gamma_3& \gamma_2}}  \&  0 \\
0 \rar  \&  \kl(C_1 ) \rar \&  \ke\rar\dar \& \kq \rar\dar\&  0  \\
\& \&  0 \&  0 \\
    \end{tikzcd},
     \end{equation}
where $\delta$ is an equation for $D$. 
The first map in the last column can be factored through $\kl(D)$, so we get
\begin{small}
 \[ 
 \begin{tikzcd}
  0 \rar & \kl \rar \dar &\kl(C_2) \oplus \kl(C_3)\rar \dar[equal] & \kq \rar\dar & 0\\
  0 \rar & \kl(D) \rar &\kl(C_2'+D) \oplus \kl(C_3'+D) \rar & \kl(D+C_2'+C_3')\tensor \ki_{C'_2\cap C'_3}  \rar & 0
 \end{tikzcd},
\]
\end{small}
where the lower row is the twisted Koszul-complex of the complete intersection $C'_2\cap C'_3$. By the snake-Lemma the torsion submodule of $\kq$ is $\kl(D)|_D$, so 
the homomorphism $\kl(C_1) \to \ke$ vanishes along $D$ and thus factors over $\kl(C_1+D)$. 

Setting $\kl'= \kl(D)$, we obtain 
\[
\begin{tikzcd}
  0 \rar & \kl'\rar & \kl'(C_1)\oplus \kl'(C_2')\oplus\kl'(C_3')\rar & \rar\ke& 0,
 \end{tikzcd}
\]
where we have removed the common components from $C_2$ and $C_3$. Repeating this for the other curves, we get to a situation where $(i)$ holds as well. 
\end{proof}

\begin{rem}
Given a sequence \eqref{eq: split resolution of E} satisfying the conditions of Proposition \ref{prop: nearly split E}, the last column of \eqref{diag: resolution / Serre seq / koszul res} becomes the (twisted) Koszul resolution for the ideal $\ki_Z$  of the complete intersection subscheme  $Z = C_2\cap C_3$ (compare Proposition \ref{prop: resolution ideal}). 
Thus we have an exact sequence
\begin{equation}\label{eq: Serre seq for E}
 0 \to \kl(C_1) \to \ke \to \kl(C_2+C_3) \ki_Z\to 0, 
\end{equation}
where we can clearly permute the roles of the $C_i$'s.
This is the usual sequence we have at our disposal when one studies rank two vector bundles, compare \cite[Ch. IV.11]{BHPV}

Conversely, if a rank two bundle $\ke$ fits into an exact sequence as in \eqref{eq: Serre seq for E} with $Z= C_2 \cap C_3$, then it is nearly split if and only if the extension class maps to zero under the map
\begin{small}
 \[ 
 \begin{tikzcd}
\Ext^1(\kl(C_2+C_3)\ki_Z, \kl(C_1)) \rar & \Ext^1(\kl(C_2)\oplus \kl(C_3), \kl(C_1))\dar[equal] \\
&H^1(\ko(C_1-C_2))\oplus H^1(\ko(C_1-C_3)) \end{tikzcd}.\]
\end{small}
It is not hard to construct examples where this is not the case.
\end{rem}

\begin{prop}\label{prop: Cherncl}
 Let $\ke$ be a nearly split vector bundle associated to $(\kl, C_1, C_2, C_3)$. Then
 \begin{gather*}
 \det\ke = \kl^2(C_1+C_2+C_3)\\
  c_1(\ke) = 2c_1(\kl) +\sum_i [C_i]\\
  c_2(\ke) =\kl^2+\kl.(C_1+C_2+C_3)+C_1.C_2+C_1.C_3+C_2.C_3.
 \end{gather*}
\end{prop}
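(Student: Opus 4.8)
The plan is to read off both Chern classes directly from the short exact sequence \eqref{eq: split resolution of E} of Proposition \ref{prop: nearly split E} by means of the Whitney sum formula. Write $L := c_1(\kl)$ and $D_i := [C_i]$ in the cohomology ring of $S$. Since the total Chern class is multiplicative in short exact sequences and $\kl$, $\kl(C_i)$ are line bundles with $c_1(\kl(C_i)) = L + D_i$, we obtain
\[
 c(\ke)\cdot(1+L) \;=\; (1+L+D_1)(1+L+D_2)(1+L+D_3).
\]
Because $S$ is a surface, everything of degree $\geq 3$ vanishes; in particular the cubic term $(L+D_1)(L+D_2)(L+D_3)$ on the right drops out.

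First I would expand the right-hand side as
\[
 1 + \bigl(3L + \textstyle\sum_i D_i\bigr) + \textstyle\sum_{i<j}(L+D_i)(L+D_j),
\]
and then divide by $1+L$ using $\tfrac1{1+L} = 1 - L + L^2$ modulo terms of degree $>2$, collecting the result by degree. The degree-one part gives
\[
 c_1(\ke) \;=\; \bigl(3L + \textstyle\sum_i D_i\bigr) - L \;=\; 2L + \textstyle\sum_i D_i,
\]
which is the asserted formula for $c_1(\ke)$; since a line bundle on $S$ is determined by its first Chern class, this also yields $\det\ke = \kl^2(C_1+C_2+C_3)$.

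For $c_2(\ke)$ I would compute the degree-two part. Using $\sum_{i<j}(L+D_i)(L+D_j) = 3L^2 + 2L\sum_i D_i + \sum_{i<j}D_iD_j$, the degree-two coefficient of $c(\ke)$ comes out as
\[
 \Bigl(3L^2 + 2L\textstyle\sum_i D_i + \textstyle\sum_{i<j}D_iD_j\Bigr) - L\Bigl(3L + \textstyle\sum_i D_i\Bigr) + L^2 \;=\; L^2 + L\textstyle\sum_i D_i + \textstyle\sum_{i<j}D_iD_j,
\]
which is exactly $\kl^2 + \kl.(C_1+C_2+C_3) + C_1.C_2 + C_1.C_3 + C_2.C_3$.

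The whole argument is purely formal, so I do not expect any real obstacle. The only points demanding a little care are the bookkeeping of the geometric-series expansion of $1/(1+L)$ and the observation that the cubic contributions vanish because $\dim S = 2$, which is what makes the final expressions the clean polynomials of degree $\leq 2$ in the building data stated in the proposition.
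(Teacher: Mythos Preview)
Your argument via the Whitney product formula on the sequence \eqref{eq: split resolution of E} is exactly the paper's approach; the only cosmetic difference is that you invert $(1+L)$ by geometric series, while the paper writes the product identity $\prod_i(1+L+D_i)=(1+L)(1+c_1(\ke)+c_2(\ke))$ and substitutes the already-known $c_1(\ke)$ to read off $c_2(\ke)$.

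One small slip worth fixing: you deduce the line-bundle equality $\det\ke=\kl^2(C_1+C_2+C_3)$ from the equality of first Chern classes, but a line bundle on a surface is \emph{not} in general determined by its class in $H^2(S,\IZ)$ (any surface with $\Pic^0\neq 0$ gives counterexamples). The paper avoids this by using multiplicativity of the determinant in short exact sequences directly on \eqref{eq: split resolution of E}, which gives $\kl\otimes\det\ke\cong\bigotimes_i\kl(C_i)$ and hence the claimed isomorphism of line bundles; your $c_1$ computation then follows from that, rather than the other way around.
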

\begin{proof}
The formula for $\det\ke$ follows from eq. \eqref{eq: split resolution of E}, which then implies the formula for $c_1(\ke)$. Finally, using again eq. \eqref{eq: split resolution of E}, we find:
\[\prod_{i=1}^3(1+c_1(\kl)+[C_i])=(1+c_1(\kl))(1+c_1(\ke)+c_2(\ke)).\]
By replacing $c_1(\ke)$ in the above we find the desired formula for $c_2(\ke)$. 
\end{proof}

\subsection{Computational tools}

In this section we collect some tools to compute the cohomology  of (twists of)  symmetric powers of our bundle $\ke$.  Assuming $\ke$ satisfies \eqref{eq: split resolution of E}, we denote by $\tilde \ke:=\oplus_{i=1}^3\kl(C_i)$.

We will frequently be interested in the cohomology of bundles of the form $\Sym^k \ke \tensor \km$ for some line bundle $\km$. Taking symmetric powers of the sequences \eqref{eq: split resolution of E} and  \eqref{eq: Serre seq for E}, compare  \cite[Prop.~A.2.2\;d.]{Eisenbud}, we get the following useful exact sequences:
      \begin{gather}
    0 \to  \kl\tensor\Sym^{k-1}\tilde \ke  \to\Sym^k\tilde \ke \to \Sym^k\ke \to 0 \label{eq: symmetric power of nearly split sequence},\\
    0 \to  \kl(C_1)\tensor \ke  \to\Sym^2\ke \to \ki^2_{C_2\cap C_3}\tensor \kl^{\tensor 2}(2C_2 + 2C_3) \to 0 \label{eq: square of Serre sequence},\\
    0 \to  \kl(C_1)\tensor\Sym^2\ke  \to\Sym^3\ke \to \ki^3_{C_2\cap C_3}\tensor \kl^{\tensor 3}(3C_2 + 3C_3) \to 0 \label{eq: cube of Serre sequence}.
   \end{gather}
   In each case, only the injectivity of the first map has to be checked, but follows from the fact that it is an isomorphism at the generic point and the corresponding sheaves are locally free, in particular torsion-free.

   The first sequence lends itself to computations, since $\Sym^k\tilde \ke$ is by definition a direct sum of line bundles. The other two sequences gain in appeal if we note that the cohomology of twists of the ideal sheaf $\ki_{C_2\cap C_3}^k$ is often computable via the following result. 
\begin{prop}\label{prop: resolution ideal}
Let $A, B\subset S$ be curves without common components and let $Z = A \cap B$. Denoting defining equations of $A$ and $B$ by respectively $a$ and $b$, then the ideal sheaf of $Z$ is  $ \ki_Z = (a,b)$ with Koszul resolution
\[
 \begin{tikzcd}[ampersand replacement = \&]
  0 \rar \& \ko(-A-B) \rar{\mat{ b \\ - a}} \& \ko(-A) \oplus \ko(-B) \rar{(a, b)} \& \ki_Z \rar \& 0. 
 \end{tikzcd}
\]
The symmetric powers coincide with the powers of the ideal
\[ \Sym ^k \ki_Z = \ki_Z ^k = (a^k , a^{k-1} b,   \dots, b^k),\] 
and free resolutions of $\ki_Z^k$ are given as follows:
\begin{small}
\[{
 \begin{tikzcd}[ampersand replacement = \&]
  0 \rar \& \bigoplus_{i = 0}^{k-1}\ko(-(k-i) A - (i+1)B) \rar{\cdot M} \& \bigoplus_{i = 0}^k \ko(-(k-i) A - iB) \rar{(a^k ,  \dots, b^k)} \& \ki_Z^k \rar \& 0 
 \end{tikzcd}}
\]
where 
\[ M =
 \mat{ 
 b & &0  \\
 - a & \ddots\\
 & \ddots & b\\
 0 & & -a
  }.
\]
\end{small}
 \end{prop}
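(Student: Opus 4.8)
The plan is to prove this in three stages. First, I would establish that $\ki_Z = (a,b)$ with the stated Koszul resolution. Since $A$ and $B$ have no common component, the regular functions $a$ and $b$ form a regular sequence in the local ring $\ko_{S,p}$ at every point $p$ (a surface is Cohen--Macaulay of dimension two, and $a, b$ cut out a zero-dimensional subscheme locally, hence form a system of parameters, hence a regular sequence). Therefore the Koszul complex on $(a,b)$ is exact and resolves $\ko_S/(a,b)$; twisting appropriately gives that $(a,b)$ is the ideal sheaf of the scheme-theoretic intersection $Z = A \cap B$ together with the displayed length-one resolution. The sign in the differential $\mat{b \\ -a}$ is just the usual Koszul sign convention ensuring the composite with $(a,b)$ vanishes.

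Second, I would verify $\Sym^k \ki_Z = \ki_Z^k$ and that this equals $(a^k, a^{k-1}b, \dots, b^k)$. The inclusion of the image of $\Sym^k \ki_Z$ into $\ki_Z^k$ is automatic; surjectivity and the identification $\Sym^k \ki_Z \cong \ki_Z^k$ can be checked locally at each point. Off $Z$ both sheaves are the full structure sheaf, so the issue is only at the finitely many points of $Z$; there $a, b$ is a regular sequence, and for an ideal generated by a regular sequence the natural surjection from the symmetric algebra onto the Rees algebra is an isomorphism (equivalently, $(a,b)$ has no ``extra'' relations among its powers beyond the Koszul ones). This is the standard fact that a complete intersection ideal is of linear type. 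Concretely, one shows the only syzygies among $a^k, a^{k-1}b, \dots, b^k$ are the obvious Koszul-type ones $b \cdot (a^{k-i}b^{\,i-1}) - a\cdot(a^{k-i-1}b^{\,i}) = 0$, which is exactly what the matrix $M$ encodes.

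Third, I would check exactness of the displayed free resolution of $\ki_Z^k$. The map $(a^k, \dots, b^k)$ is surjective onto $\ki_Z^k$ by the previous step. The matrix $M$ is bidiagonal with entries $b$ on the diagonal and $-a$ on the subdiagonal, so composing with $(a^k, \dots, b^k)$ gives zero term by term. Injectivity of $\cdot M$ follows because $a,b$ have no common zero divisor relations at the generic point, so $M$ has full rank $k$ generically; and exactness in the middle is a local computation at points of $Z$, where one can use that $a,b$ is a regular sequence to run an induction on $k$ (splitting off one Koszul-type relation at a time), or invoke the Eagon--Northcott / Hilbert--Burch type description of resolutions of powers of a complete intersection. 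Alternatively, this resolution is precisely the one obtained by taking the $k$-th symmetric power of the length-one Koszul resolution of $\ki_Z$ in the sense of \cite[Prop.~A.2.2]{Eisenbud}, which is how it is being used elsewhere in the paper.

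The main obstacle is the middle step: proving that the symmetric power of $\ki_Z$ genuinely coincides with the ordinary power $\ki_Z^k$ and has no unexpected torsion or relations. This is where the hypothesis that $A$ and $B$ share no component is essential, since it guarantees $a,b$ is a regular sequence locally; without that, $\Sym^k\ki_Z \to \ki_Z^k$ need not be injective and the proposed resolution would fail. Once one is comfortable that $(a,b)$ is a local complete intersection of codimension two everywhere on its support, everything else is the standard theory of Koszul complexes and their symmetric powers, and the resolution of $\ki_Z^k$ is a formal consequence.
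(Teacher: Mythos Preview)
Your proposal is correct and follows essentially the same route as the paper: both reduce to the local situation at points of $Z$, observe that $a,b$ is a regular sequence there, and then invoke the equality of the symmetric and Rees algebras for complete intersection ideals (the paper cites \cite{Huneke80} for this, while you phrase it as the ideal being of linear type). Your write-up is considerably more detailed than the paper's three-line proof, but the substance is the same.
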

\begin{proof}
The sequences are evidently exact on the complement of $Z$. For every point $p\in Z$ the local equations $a, b$ form a regular sequence in the local ring $\ko_{S, p}$. In this setting, the presentation of the powers of the ideal can be read off from the Rees algebra, which coincides with the symmetric algebra by \cite{Huneke80}.
\end{proof}

\section{Triple covers}\label{sect: triple covers}
\subsection{Conventions for projective bundles}
Here we adopt the convention that for a locally free sheaf $\ke$ on $S$ 
\[ \pi\colon \IP(\ke) = \Proj_S(\Sym^* \ke)\to S\]
is the projective bundle parametrizing rank one quotient bundles of $\ke$. Thus there is a relative Euler sequence 
\begin{equation}\label{eq: Euler}
 \begin{tikzcd}
 0 \rar & \Omega_{\IP(\ke)/S} \rar & \pi^*\ke\tensor \ko_{\IP(\ke)}(-1) \rar & \ko_{\IP(\ke)}\rar & 0 
   \end{tikzcd}
\end{equation}
and, in particular, we have $\omega_{\IP(\ke)} = \pi^*(\det \ke \tensor \omega_{S})(-\rk \ke)$. 

\subsection{Triple covers after Miranda, Casnati and Ekedahl}
We recall the construction of Casnati and Ekedahl \cite{CE96}, and Miranda \cite{Miranda}. Let $S$ be a smooth projective surface and $\ke$ a rank two vector bundle on $S$. On the projective bundle $\pi\colon \IP= \IP(\ke)\to S$ we have the relative tautological bundle $\ko_{\IP}(1)$ and, via pushforward, a natural isomorphism
\[ \Phi_3 \colon H^0(S, \Sym^3\ke\tensor \det \ke^\vee) \isom H^0(\IP, \pi^*\det \ke^\vee(3)).\]

\begin{defin}
 We say a section $\eta \in H^0(S, \Sym^3\ke\tensor \det \ke^\vee)$ has the right codimension at $x\in S$ if $\Phi_3(\eta)$ does not vanish identically along the fibre $\inverse \pi(x)$.
 We say $\eta$ has the right codimension if it has the right codimension at every point of $S$. 
\end{defin}
If $\eta$ has the right codimension at every point, then the vanishing  subscheme of $\Phi_3(\eta)$ defines a Gorenstein triple cover, as in each fibre it cuts out a Gorenstein subscheme of length three. More precisely, we have: 

\begin{thm}[Casnati-Ekedahl, Miranda]\label{thm: CE}
Let $S$ be a smooth projective surface.
The above construction gives a bijective correspondence beween connected Gorenstein triple covers $f\colon T \to S$ with  $\ke = \left( f_*(\ko_T) / \ko_S\right)^\vee$  and rank two vector bundles $\ke$ on $S$ such that  $H^0(S, \ke^\vee) = 0$ together with a section $\eta \in H^0(S, \Sym^3 \ke \tensor \det \ke ^\vee)$ having the right codimension up to non-constant multiples. 

Moreover, if $\Sym^3 \ke \tensor \det \ke ^\vee$ is globally generated, then a general section $\eta$ defines a smooth triple cover. 
 \end{thm}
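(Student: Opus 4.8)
The plan is to unwind the construction on $\IP = \IP(\ke)$ and then reduce everything to a fibrewise statement. First I would set up the correspondence in the forward direction: given $\ke$ with $H^0(S,\ke^\vee)=0$ and a section $\eta$ of the right codimension, the divisor $D_\eta := Z(\Phi_3(\eta)) \subset \IP$ is, in each fibre $\inverse\pi(x) \cong \IP^1$, the vanishing of a nonzero cubic form, hence a length-three subscheme. Every length-three subscheme of $\IP^1$ is Gorenstein (it is a Cartier divisor on a smooth curve), so $f := \pi|_{D_\eta}\colon D_\eta \to S$ is a flat finite degree-three Gorenstein cover. The point $H^0(S,\ke^\vee)=0$ enters to guarantee that $T := D_\eta$ is \emph{connected}: one computes $f_*\ko_T$ from the Euler sequence \eqref{eq: Euler} twisted appropriately, getting $f_*\ko_T = \ko_S \oplus \ke^\vee$, so $H^0(T,\ko_T) = H^0(S,\ko_S) \oplus H^0(S,\ke^\vee) = \IC$ exactly when $H^0(S,\ke^\vee)=0$, and moreover this identifies the Tschirnhausen bundle of $f$ with $\ke$. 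Scaling $\eta$ by a nonzero constant obviously gives the same divisor, which is why the correspondence is stated up to non-constant multiples.

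For the reverse direction I would start from a connected Gorenstein triple cover $f\colon T \to S$ and recover $\ke$ and $\eta$. Gorenstein-ness makes $f_*\omega_{T/S}$ locally free of rank three, and the standard trace/relative-duality argument (this is the content of Casnati--Ekedahl that I am allowed to cite) produces a canonical closed embedding $T \hookrightarrow \IP(\ke)$ over $S$, where $\ke = (f_*\ko_T/\ko_S)^\vee$, realising $T$ as a relative cubic, i.e. as $Z(\Phi_3(\eta))$ for a unique-up-to-scalar $\eta \in H^0(S,\Sym^3\ke\tensor\det\ke^\vee)$. Connectedness of $T$ forces $H^0(S,\ke^\vee)=0$ by the computation of the previous paragraph. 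The two constructions are mutually inverse essentially by how the embedding is defined, which gives the claimed bijection.

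For the last sentence, assume $\Sym^3\ke\tensor\det\ke^\vee$ is globally generated. Then $\pi^*\det\ke^\vee(3)$ is globally generated on $\IP$ via $\Phi_3$, so by Bertini a general member $D_\eta$ of this base-point-free linear system is smooth away from the base locus (empty here) — but one has to be a little careful because $\IP$ is a surface bundle over a surface and we want $D_\eta$ itself smooth as a variety. Since $\IP$ is smooth and the linear system is base-point-free, a general $D_\eta$ is a smooth divisor in $\IP$ by the usual Bertini theorem over $\IC$. It remains to check that a general such $\eta$ has the right codimension at every point, so that $f\colon D_\eta \to S$ is actually finite of degree three: this follows because the sublocus of sections vanishing identically on some fibre has large codimension — a section vanishing on a fixed fibre $\inverse\pi(x)$ imposes four conditions (the four coefficients of the cubic), and letting $x$ vary over the surface $S$ this bad locus has codimension $\ge 4 - 2 = 2 > 0$ in the space of sections, hence the general $\eta$ avoids it. Combining, a general $\eta$ gives a smooth $D_\eta$ mapping finitely and flatly of degree three onto $S$, i.e. a smooth triple cover.

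The main obstacle I expect is \emph{not} the Bertini/genericity part but the clean bookkeeping in the reverse direction: producing the canonical embedding $T \hookrightarrow \IP(\ke)$ and checking that $\eta$ is intrinsic (independent of choices) and that the two assignments are inverse. This is exactly where one leans on the Casnati--Ekedahl relative-duality machinery; I would cite \cite{CE96} and \cite{Miranda} for the structure theorem and focus the written proof on the identification $f_*\ko_T = \ko_S\oplus\ke^\vee$, the connectedness $\Leftrightarrow H^0(S,\ke^\vee)=0$ equivalence, and the Bertini argument for the final assertion.
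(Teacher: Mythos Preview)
The paper does not supply its own proof of this theorem: it is stated with attribution to Casnati--Ekedahl and Miranda and used as a black box, citing \cite{CE96} and \cite{Miranda}. So there is nothing to compare against in the paper itself.

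That said, your sketch is a faithful outline of what one finds in those references. The forward direction (relative cubic divisor in $\IP(\ke)$, fibrewise length-three Gorenstein scheme), the pushforward computation yielding $f_*\ko_T \cong \ko_S \oplus \ke^\vee$ via the twisted ideal sequence and relative duality, and the resulting equivalence between connectedness and $H^0(S,\ke^\vee)=0$ are all correct and standard. For the reverse direction you rightly defer to the Casnati--Ekedahl structure theorem for the canonical embedding $T \hookrightarrow \IP(\ke)$; this is the genuinely nontrivial input and is exactly what \cite{CE96} provides. Your Bertini argument for the final assertion is fine; the only place to tighten is the ``codimension $\geq 4-2$'' count for sections vanishing on some fibre, which in a formal proof should be phrased as an incidence-correspondence dimension bound rather than a bare subtraction, but the conclusion is correct.
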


The bundle $\ke$ is called \textit{the Tschirnhausen bundle} of the triple cover.
 \begin{defin}
  We say a rank $2$ bundle $\ke$ is admissible if it is the Tschirnhausen bundle of a triple cover $f\colon T\to S$ with $T$ smooth and connected. 
   \end{defin}
 If $f\colon T\to S$ is a triple cover then, by relative duality, for every vector bundle $\kf$ on $T$ we have
 \[ f_*\shom_T(\kf, \omega_T) \isom \shom_S(f_*\kf, \omega_S).\]
Either from this, or by construction  from \cite{CE96}, we see that 
 \begin{equation}\label{eq: pushforwardKT}
\omega_T = \ko_{\IP}(1) \tensor\pi^*\omega_S |_T \text{ and }   f_*\omega_T \isom  \shom_S(f_*\ko_T, \omega_S)\isom \omega_S \oplus( \ke\tensor \omega_S).
 \end{equation} 
 
As an application of this equation, we show how positivity of the Tschirnhausen bundle forces $T$ to be minimal of general type. 
By \eqref{eq: pushforwardKT} the canonical bundle  $\omega_T$ is ample if $\ko_{\IP(\ke)}(1)\tensor \pi^*\omega_S\isom \ko_{\IP(\ke\tensor \omega_S)}(1)$ is ample. By definition, this is the case if and only if the vector bundle $\ke\tensor \omega_S$ is ample, compare \cite{LazarsfeldII}.
 While ampleness of bundles in general is subtle, we formulate a useful sufficient criterion.
 \begin{prop}\label{prop: general criterion min gen type}
 Let $f\colon T \to S$ be a triple cover with Tschirnhausen bundle $\ke$ and let $\kl$ be a globally generated ample line bundle on $S$. If $\ke\tensor \omega_S \tensor \kl^\vee$ is globally generated on $S$, then $T$ is minimal of general type. 
 \end{prop}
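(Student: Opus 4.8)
The plan is to reduce the statement to the ampleness criterion recorded just above: by \eqref{eq: pushforwardKT} it suffices to show that the rank two bundle $\ke \tensor \omega_S$ is ample on $S$, since then $\omega_T = \ko_{\IP(\ke\tensor\omega_S)}(1)|_T$ is ample and $T$ is automatically of general type and minimal (a surface with ample canonical bundle has no $(-1)$-curves). So the whole content is: if $\kl$ is a globally generated ample line bundle and $\ke\tensor\omega_S\tensor\kl^\vee$ is globally generated, then $\ke\tensor\omega_S$ is ample.

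First I would set $\kf := \ke\tensor\omega_S\tensor\kl^\vee$, so that $\ke\tensor\omega_S = \kf\tensor\kl$ with $\kf$ globally generated (hence nef) and $\kl$ globally generated and ample. The natural approach is to write $\ke\tensor\omega_S$ as the tensor product of a nef bundle and an ample line bundle and invoke the standard fact that such a tensor product is ample; concretely, nef $\tensor$ ample line bundle is ample for vector bundles (see e.g.\ \cite{LazarsfeldII}). Alternatively, and more self-containedly, one can argue on $X := \IP(\ke\tensor\omega_S) = \IP(\kf)$ directly: global generation of $\kf$ gives a surjection $\ko_S^{\oplus N}\onto \kf$, hence a closed embedding $X = \IP(\kf)\hookrightarrow \IP(\ko_S^{\oplus N}) = S\times\IP^{N-1}$ under which $\ko_X(1)$ is the restriction of $\pr_2^*\ko_{\IP^{N-1}}(1)$. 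Twisting the tautological quotient by $\kl$, we get that $\ko_{\IP(\kf\tensor\kl)}(1) = \ko_X(1)\tensor\pi^*\kl$ is the restriction to $X$ of $\pr_2^*\ko_{\IP^{N-1}}(1)\tensor\pr_1^*\kl$, which is ample on $S\times\IP^{N-1}$ because $\kl$ is ample on $S$ and $\ko(1)$ is ample on $\IP^{N-1}$. The restriction of an ample line bundle to a closed subvariety is ample, so $\ko_{\IP(\ke\tensor\omega_S)}(1)$ is ample, i.e.\ $\ke\tensor\omega_S$ is ample.

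Finally I would translate back to $T$: the inclusion $T\hookrightarrow \IP(\ke\tensor\omega_S)$ identifies $\omega_T$ with the restriction of $\ko_{\IP(\ke\tensor\omega_S)}(1)$ by \eqref{eq: pushforwardKT}, so $\omega_T$ is ample; a smooth projective surface with ample canonical bundle is minimal of general type.

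The main obstacle is not conceptual but bookkeeping: one must be careful that the relative tautological bundle behaves correctly under the twist $\ke\leadsto\ke\tensor\omega_S$ and under the embedding coming from global generation, i.e.\ that $\ko_{\IP(\kf\tensor\kl)}(1)\cong\ko_{\IP(\kf)}(1)\tensor\pi^*\kl$ and that this is pulled back from the Segre-type ample bundle on $S\times\IP^{N-1}$. Once these identifications are pinned down, ampleness follows formally from the ampleness of a line bundle on a product and restriction to a closed subscheme; the surjectivity $H^0(S,\ke^\vee)=0$ needed to have a genuine triple cover is already guaranteed by Theorem \ref{thm: CE}, and plays no further role here.
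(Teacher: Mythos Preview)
Your proposal is correct and follows essentially the same route as the paper: reduce to showing $\ke\tensor\omega_S$ is ample, then use that a globally generated bundle twisted by an ample line bundle is ample. The paper's proof is the one-line citation ``follows from \cite[Example~6.1.5]{LazarsfeldII}'', which is exactly the fact you invoke; you simply spell out a self-contained proof of that example via the embedding $\IP(\kf)\hookrightarrow S\times\IP^{N-1}$.
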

 \begin{proof}
  As mentioned above, it is enough to prove that $\ke\tensor \omega_S$ is ample, which follows from \cite[Example~6.1.5]{LazarsfeldII}.
 \end{proof}
 \subsection{Warm-up: split Tschirnhausen bundle}
Consider as a warm-up the case of a triple cover $f \colon T \to S$, where the Tschirnhausen bundle is split, that is, $\ke\isom \kl_1\oplus \kl_2$. This includes the case of a $\IZ/3\IZ$-Galois cover, compare \cite{pardini91}.

The first condition of the admissibility is easy to check:
\begin{lem}\label{lem: split connectedness condition}
 We have $h^0(S, \ke^\vee ) =0$ if and only if $h^0(S, \kl_1^\vee) = h^0(S, \kl_2^\vee) = 0$.
 \end{lem}

 The bundle containing the sections defining a triple cover is
 \[ \Sym^3\ke \tensor \det \ke^\vee = \bigoplus_{ k = 0}^3 \kl_1^{k-1} \tensor \kl_2^{3-k-1}\]
 and thus some necessary and sufficient conditions on the existence of a smooth cover can be easily found.
 \begin{prop}\label{prop: split case globally generated}
  If for all $0\leq k\leq 3$ the line bundles $ \kl_1^{k-1} \tensor \kl_2^{3-k-1}$ are globally generated, then there exists a smooth triple cover $f\colon T \to S$ with Tschirnhausen bundle $\ke$. 
 \end{prop}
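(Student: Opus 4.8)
The plan is to apply Theorem~\ref{thm: CE} directly: it suffices to verify the two hypotheses of that theorem, namely that $H^0(S,\ke^\vee)=0$ and that $\Sym^3\ke\tensor\det\ke^\vee$ is globally generated, for then a general section $\eta$ of the latter bundle has the right codimension and defines a smooth connected triple cover with Tschirnhausen bundle $\ke$. First I would deal with the connectedness condition: since $\ke\isom\kl_1\oplus\kl_2$, we have $\ke^\vee\isom\kl_1^\vee\oplus\kl_2^\vee$, so $H^0(S,\ke^\vee)=H^0(S,\kl_1^\vee)\oplus H^0(S,\kl_2^\vee)$. Now one of the line bundles $\kl_1^{k-1}\tensor\kl_2^{3-k-1}$ appearing in the hypothesis, namely the one with $k=0$, is $\kl_1^{-1}\tensor\kl_2^{-2}$, and the one with $k=3$ is $\kl_1^{2}\tensor\kl_2^{-1}$; more to the point, Lemma~\ref{lem: split connectedness condition} reduces $h^0(\ke^\vee)=0$ to $h^0(\kl_1^\vee)=h^0(\kl_2^\vee)=0$, which I would derive from global generation of the relevant summands.

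Here is the one subtlety to watch: the bundles in the hypothesis are indexed so that for $k=0$ we get $\kl_1^{-1}\kl_2^{-2}$ and for $k=3$ we get $\kl_1^{2}\kl_2^{-1}$, neither of which is literally $\kl_1^\vee$ or $\kl_2^\vee$. One needs global generation of those two specific summands to force $h^0(\kl_1^\vee\kl_2^{-2})=0$ respectively $h^0(\kl_1^{2}\kl_2^{-1})$ behaving well, but the cleanest route is: global generation of a line bundle $\km$ implies $h^0(\km^\vee)=0$ unless $\km\isom\ko_S$. I would argue that if, say, $\kl_1^\vee$ had a nonzero section, then combined with a (nonzero, since globally generated and we may assume $S$ is connected hence these bundles are either $\ko_S$ or have no dual sections — actually the honest statement is that a globally generated line bundle $\km$ has $h^0(\km^\vee)\le 1$ with equality iff $\km\isom\ko_S$) sections of the globally generated summands, one produces a section of $\ko_S$ realizing a contradiction or forces a triviality that still gives connectedness. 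In practice the paper has Lemma~\ref{lem: split connectedness condition} precisely to package this, so I would simply invoke it after checking the boundary cases.

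The global generation of $\Sym^3\ke\tensor\det\ke^\vee$ is then immediate from the displayed decomposition $\Sym^3\ke\tensor\det\ke^\vee=\bigoplus_{k=0}^3\kl_1^{k-1}\tensor\kl_2^{3-k-1}$, since a finite direct sum of globally generated line bundles is globally generated, and this is exactly the hypothesis of the proposition. With both conditions of Theorem~\ref{thm: CE} verified, its last sentence gives a smooth triple cover for a general $\eta$, and connectedness follows because the theorem's correspondence is with \emph{connected} Gorenstein triple covers precisely when $H^0(S,\ke^\vee)=0$.

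I do not expect a serious obstacle here; this is essentially a bookkeeping argument assembling Lemma~\ref{lem: split connectedness condition}, the explicit splitting of $\Sym^3\ke\tensor\det\ke^\vee$, and the smoothness half of Theorem~\ref{thm: CE}. The only place requiring a moment's care is the interplay between ``globally generated'' for the four summands and the vanishing $h^0(\ke^\vee)=0$: one must make sure the indexing does give enough information, or else note that whenever a globally generated line bundle $\km$ on a connected $S$ satisfies $h^0(\km^\vee)\ne 0$ then $\km\isom\ko_S$, and check that the corresponding degenerate cases ($\kl_1$ or $\kl_2$ trivial) are still covered by Lemma~\ref{lem: split connectedness condition}.
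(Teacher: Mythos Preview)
Your core argument---the four summands being globally generated implies $\Sym^3\ke\tensor\det\ke^\vee$ is globally generated, and then the ``Moreover'' clause of Theorem~\ref{thm: CE} yields a smooth triple cover for general $\eta$---is exactly the paper's proof, which consists of the single sentence ``Clear from Theorem~\ref{thm: CE}.''

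However, everything you write about establishing $H^0(S,\ke^\vee)=0$ is both unnecessary and, as stated, unprovable. The proposition asks only for a \emph{smooth} triple cover with Tschirnhausen bundle $\ke$; it does not assert connectedness, and the paper's proof makes no attempt to verify $h^0(\ke^\vee)=0$. Indeed that vanishing does not follow from the hypothesis: take $\kl_1=\kl_2=\ko_S$, so all four summands $\kl_1^{k-1}\tensor\kl_2^{2-k}$ equal $\ko_S$ and are globally generated, yet $h^0(\ke^\vee)=2$. Your own hedging (``the only place requiring a moment's care\dots'') is pointing at a genuine obstruction, not a mere subtlety. Also note a slip in your indexing: for $k=0$ the summand is $\kl_1^{-1}\tensor\kl_2^{2}$, not $\kl_1^{-1}\tensor\kl_2^{-2}$.

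So: drop the connectedness discussion entirely and your proof collapses to the paper's one-liner.
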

\begin{proof}
 Clear from Theorem  \ref{thm: CE}.
\end{proof}

Let us denote the fibre coordinates on the $\IP^1$-bundle by $v, w$. Then since the bundle is globally split, any defining section can be globally written as
  \begin{equation}\label{eq: etatilde}
\eta = \sum_{k=0}^3 f_k v^{k} w^{3-k}
\end{equation}
with $f_{k}\in H^0(S,\kl_1^{k-1} \tensor \kl_2^{3-k-1})$. 
In order to see if this is at least globally irreducible, one would have to compute the discriminant, which is akward to handle in this generality. At least we have an easy criterion for the equation to be divisible by one of the variables.
\begin{lem}\label{lem: split case necessary}
 If $\ke = \kl_1\oplus \kl_2$ is admissible, then 
 \[H^0(S, \kl^3_1\tensor\kl_2^\vee)\neq 0 \ \text{ and } H^0(S, \kl^3_2\tensor\kl_1^\vee)\neq 0 .\] 
\end{lem}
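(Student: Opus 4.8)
The plan is to show that the defining section $\eta$ of an admissible split bundle cannot be divisible by either fibre coordinate, and to translate the non-vanishing of the top and bottom coefficients $f_3, f_0$ into the asserted cohomological conditions. First I would recall from \eqref{eq: etatilde} that, since $\ke = \kl_1 \oplus \kl_2$ is globally split, any section $\eta \in H^0(S, \Sym^3\ke \tensor \det\ke^\vee)$ can be written globally as $\eta = \sum_{k=0}^3 f_k v^k w^{3-k}$ with $f_k \in H^0(S, \kl_1^{k-1}\tensor\kl_2^{3-k-1})$. In particular $f_3 \in H^0(S, \kl_1^2\tensor\kl_2^\vee)$ and $f_0 \in H^0(S, \kl_2^2\tensor\kl_1^\vee)$. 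Wait — the claim is about $\kl_1^3\tensor\kl_2^\vee$ and $\kl_2^3\tensor\kl_1^\vee$, so the point is not merely $f_3 \neq 0$; I will need to produce sections of these twisted-up bundles, presumably by multiplying $f_3$ by a section coming from the fibre direction, i.e.\ by interpreting the monomial $v^3$ itself as contributing a section of $\kl_1\tensor(\text{something})$ after pushing forward along $\pi$.

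The cleaner route, which I would actually carry out, is geometric. Suppose $\ke = \kl_1\oplus\kl_2$ is admissible, with smooth connected triple cover $f\colon T\to S$ and defining section $\eta$ having the right codimension. The splitting $\ke = \kl_1\oplus\kl_2$ gives two disjoint sections $\sigma_1, \sigma_2\colon S \to \IP(\ke)$ of $\pi$, namely the ones corresponding to the two projections onto line bundle quotients; in the fibre coordinates $v,w$ these are the loci $\{w = 0\}$ and $\{v = 0\}$. The cover $T\subset \IP(\ke)$ is the zero locus of $\Phi_3(\eta)$, a section of $\pi^*\det\ke^\vee(3)$. I would first argue that $T$ cannot contain either section $\sigma_i(S)$: if it did, then $\Phi_3(\eta)$ would be divisible by the equation of that section, hence $\eta$ would be divisible by $v$ or by $w$, which would make $T$ reducible (a union of the section and a double cover) or at least non-connected away from that component — contradicting connectedness and smoothness, since a reducible $T$ is not smooth and irreducible. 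Granting $T\not\supset\sigma_i(S)$, the scheme-theoretic intersection $T\cap\sigma_i(S)$ is an effective divisor on $\sigma_i(S)\cong S$, whose class is $\sigma_i^*(\pi^*\det\ke^\vee(3))$. Computing: on $\sigma_1(S) = \{w=0\}$ the restriction of $\ko_{\IP(\ke)}(1)$ is $\kl_1$ (the quotient line bundle at that section), so $\sigma_1^*(\pi^*\det\ke^\vee(3)) = \det\ke^\vee\tensor\kl_1^3 = \kl_1^{-1}\kl_2^{-1}\tensor\kl_1^3 = \kl_1^2\tensor\kl_2^{-1}$ — hmm, that recovers only $f_3$ again.

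Let me reconsider which section gives the stated bundle. The monomial $v^3$ in \eqref{eq: etatilde} is the coefficient whose vanishing locus contains $\{w=0\}$, i.e.\ $\eta$ divisible by $w$ corresponds to $f_0 = 0$ being strengthened to $f_0 = f_1 = \cdots$; what I actually want is: the \emph{residual} behaviour. The correct statement is that $T$ meets the section $\sigma_1(S) = \{w = 0\}$ in the divisor $Z(f_3) $ with $f_3\in H^0(\kl_1^2\kl_2^{-1})$, and for this intersection to be the one cut out inside the \emph{triple cover} — equivalently for $\eta$ not to be divisible by $w$ but also for the section $v^3$ not to degenerate the cover — one needs $H^0(\kl_1^3\kl_2^{-1})\neq 0$; the extra twist by $\kl_1$ comes because the conormal bundle of $\sigma_1(S)$ in $\IP(\ke)$ is $\kl_1\kl_2^{-1}$, so multiplying $f_3$ by the section of $\kl_1\kl_2^{-1}$ vanishing on nothing... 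I will sort the precise bookkeeping in the writeup. So: the plan is (1) reduce to $T\not\supset\sigma_i(S)$ using connectedness/smoothness; (2) intersect $T$ with each $\sigma_i(S)$ and use the nonemptiness together with the normal bundle computation of Section \ref{sect: triple covers} to get an effective divisor in $|\kl_1^3\kl_2^{-1}|$ and in $|\kl_2^3\kl_1^{-1}|$; (3) conclude $h^0\neq 0$. The main obstacle will be step (2): getting the twist right, i.e.\ correctly identifying which line bundle carries the natural section — I expect it follows from writing $\Phi_3(\eta)$ near $\sigma_1(S)$ in a local trivialization and using that $\ko_{\IP(\ke)}(1)|_{\sigma_1(S)} = \kl_1$ while the ideal of $\sigma_1(S)$ is generated by a section of $\pi^*(\kl_2^{-1})(1)$, so that the coefficient of the linear term of $\Phi_3(\eta)$ along $\sigma_1(S)$ lives in $\det\ke^\vee\tensor\kl_1^3\tensor\kl_1\kl_2^{-1} = \kl_1^3\kl_2^{-1}$, hmm that gives $\kl_1^4\kl_2^{-2}$; I will need to be careful and may instead derive the statement purely algebraically from \eqref{eq: etatilde} by noting that divisibility of the discriminant-free part forces both $f_0$ and $f_3$ nonzero and then that an admissible (hence irreducible) cubic with $f_3\neq 0$ additionally forces $f_3$ to not be "too special", which is exactly captured after clearing denominators by $H^0(\kl_1^3\kl_2^{-1})\neq 0$.
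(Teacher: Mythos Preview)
Your initial computation is correct: $f_3 \in H^0(S, \kl_1^2 \tensor \kl_2^\vee)$ and $f_0 \in H^0(S, \kl_2^2 \tensor \kl_1^\vee)$, and the paper's proof is precisely your first route. If $T$ is smooth and connected then $\eta$ is irreducible, hence not divisible by $v$ or by $w$, hence $f_3 \neq 0$ and $f_0 \neq 0$. This is exactly what the sentence immediately preceding the lemma (``an easy criterion for the equation to be divisible by one of the variables'') points to; no separate proof environment is given because the argument is this one line.

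The reason you got stuck is that the lemma as printed almost certainly carries a typo: the exponents should be $2$, not $3$. The intended statement is $H^0(S, \kl_1^2\tensor\kl_2^\vee)\neq 0$ and $H^0(S, \kl_2^2\tensor\kl_1^\vee)\neq 0$. This is confirmed by the paper's own application in Example~\ref{exam: split triple planes}, which invokes the lemma to conclude $2a_2 - a_1 \geq 0$ on $\IP^2$ --- precisely the condition $H^0(\kl_2^2 \tensor \kl_1^\vee) \neq 0$ for $\kl_i = \ko(a_i)$; the exponent-$3$ version would only yield the weaker $3a_2 - a_1 \geq 0$. Your geometric computation $\sigma_1^*(\pi^*\det\ke^\vee(3)) = \kl_1^2 \tensor \kl_2^\vee$ is also correct and, not coincidentally, lands on the same exponent.

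All of your subsequent attempts to manufacture an extra twist by $\kl_i$ --- via normal bundles of $\sigma_i(S)$, linear terms of $\Phi_3(\eta)$ along the section, or vague discriminant considerations --- are chasing a phantom. Drop them; your first paragraph, minus the ``Wait'', is already the complete proof of the corrected statement and matches the paper's argument exactly.
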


These easy conditions suffice to give a complete answer in particular cases.
\begin{exam}\label{exam: split triple planes}
 Let $S = \IP^2$ and $\ke = \ko(a_1) \oplus \ko( a_2)$ with $a_1\geq a_2>0$ satisfying the condition of Lemma \ref{lem: split connectedness condition}. Then the least positive line bundle summand of $\Sym^3\ke \tensor \det \ke^\vee $ is 
 $\ko_{\IP^2}(2a_2-a_1)$, so by Lemma \ref{lem: split case necessary} we have $2a_2-a_1 \geq 0 $ if $\ke$ is admissible. But since on $\IP^2$ any line bundle of non-negative degree is globally generated, this condition is already sufficient for admissibility by Proposition \ref{prop: split case globally generated}. 

 In total, we have proved that $\ke = \ko(a_1) \oplus \ko( a_2)$ is admissible if and only if
 \[ 0 <a_2 \leq a_1 \leq 2a_2.\]
 
We will discuss what kind of surfaces can arise as triple planes with split Tschirnhausen bundle in Section \ref{sect: triple planes}.
 \end{exam}

 \section{Special triple covers}\label{sect: special triple covers}
  \begin{defin}
  We call a triple cover $f \colon T \to S$ \emph{special}  if the Tschirnhausen bundle $\ke$ is nearly split in the sense of Proposition \ref{prop: nearly split E}.
 \end{defin}
 We will now gather some information on such covers and start with the following remark.
 \begin{rem}\label{rem: IP and tilde IP}
  Let $\ke$ be  a nearly split bundle associated to $(\kl, C_1, C_2, C_3)$. Then the surjection  
  \[\tilde \ke := \kl(C_1)\oplus \kl(C_2) \oplus \kl(C_3)\onto \ke\]
  defines an embedding of projective bundles 
    \[
  \begin{tikzcd}
   \IP := \IP(\ke)\arrow[hookrightarrow]{rr}\arrow{dr}[swap]{\pi} && \tilde \IP :=\IP(\tilde \ke)\arrow{dl}{\tilde \pi}\\ & S
  \end{tikzcd}
  \]
  with the property that $\ko_{\tilde \IP}(1)|_{\IP} = \ko_{\IP}(1)$.
  
Let us denote a defining equation for $C_i $ by $\gamma_i$ and let $u,v,w$ be the corresponding sections of $\ko_{\tilde\IP}(1)\otimes\tilde\pi^*\kl^\vee(-C_i)$ with $i \in \{1,2,3\}$ defining $\IP(\kl(C_j)\oplus\kl(C_k))\subset\tilde\IP$ via the natural composition
 \[ \tilde\pi^*\kl(C_i)\rightarrow\tilde\pi^*\tilde\ke\rightarrow\ko_{\tilde\IP}(1).\]
 
 Then the equation for $\IP\subset\tilde\IP$ is
 \begin{equation}\label{eq: eq for IP} 
\gamma_1 u + \gamma_2 v + \gamma_3 w = 0, 
 \end{equation}
 which is a section of $\ko_{\tilde \IP}(1) \tensor \tilde \pi^*\kl^\vee$.
 \end{rem}

 If $\ke$ is admissible, then together with the triple cover we get a diagram
  \[
  \begin{tikzcd}
T \arrow{dr}[swap]{f}\rar[hookrightarrow] &  \IP\arrow[hookrightarrow]{r}\arrow{d}{\pi} \rar & \tilde \IP \arrow{dl}{\tilde \pi}\\
  & S
 \end{tikzcd}
 \]
 where $T$ is defined in $\IP$ by a section  $\eta \in H^0(\IP, \pi^*\det \ke^\vee(3))$.
The following definition will prove to be useful later.
\begin{defin}\label{def: ci cover}
 We say that the special triple cover $f\colon T\to S$ is a global complete intersection (gci) if $T = \tilde T \cap \IP\subset \tilde \IP$, that is, if the defining equation $\eta \in H^0(\IP, \pi^*\det \ke^\vee(3))$ lifts to an equation $\tilde \eta \in H^0(\tilde \IP, \tilde\pi^*\det \ke^\vee(3))$.
\end{defin}

 The invariants of triple planes have been computed also by Miranda \cite{Miranda}, but we include below a proof for the convenience of the reader. 
 \begin{cor}\label{cor: invariants}
  In the above situation, the invariants of $T$ are 
  \begin{align*}
   K_T^2 & =3K_S^2+4K_S.(2\kl+\sum_iC_i)+(5\kl^2+5\kl.\sum_iC_i+2\sum_iC_i^2+\sum_{i<j}C_iC_j) \\
   \chi(T) &=3\chi(S)+\kl^2+\kl.K_S+\kl.\sum_iC_i+\frac12 \sum_i(C_i^2+K_S.C_i). \\
   \chi(2K_T) &  =
   3(\chi(S)+K_S^2)+9K_S.\kl+6\kl^2\\ & \qquad +6\kl.\sum_iC_i+\frac92 K_S.\sum_iC_i+\frac{5}{2}\sum_iC_i^2+\sum_{i<j}C_i.C_j \\
   p_g(T) & = h^0 (S, \ke\tensor \omega_S)+p_g(S) \\
   q(T) &  = h^1(S, \ke\tensor \omega_S)+q(S)\end{align*}
  
The plurigenera of $T$ are computed as $P_2(T)= h^0(\Sym^2\ke\tensor \omega_S^2)$ and 
  \[P_m(T)\geq h^0(S,\Sym^m\ke\otimes\omega_S^m)-h^0(S,\Sym^{m-3}\ke\otimes\det\ke\otimes\omega_S^m)\]
  with equality if 
  $h^1(S,\Sym^{m-3}\ke\otimes\det\ke\otimes\omega_S^{m})=0$.
 \end{cor}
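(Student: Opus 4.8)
The plan is to compute all invariants by pushing forward line bundles from $T$ to $S$ and using the known structure of $f_*\ko_T = \ko_S \oplus \ke^\vee$, together with the formulas $\omega_T = \ko_{\IP}(1)\tensor\pi^*\omega_S|_T$ and $f_*\omega_T \isom \omega_S\oplus(\ke\tensor\omega_S)$ from \eqref{eq: pushforwardKT}. First I would record the Chern classes of $\ke$ from Proposition \ref{prop: Cherncl}: writing $c_1 = c_1(\ke) = 2\kl + \sum_i C_i$ (as a divisor class) and $c_2 = c_2(\ke)$ as given there, so that $\det\ke = \kl^2(C_1+C_2+C_3)$. Since $f$ is finite and flat of degree $3$, we have $\chi(T) = \chi(f_*\ko_T) = \chi(\ko_S) + \chi(\ke^\vee)$, and the latter is computed by Riemann–Roch on $S$: $\chi(\ke^\vee) = 2\chi(\ko_S) + \tfrac12(c_1(\ke^\vee)^2 - 2c_2(\ke^\vee)) + \tfrac12 c_1(\ke^\vee).K_S = 2\chi(S) - \tfrac12 c_1.K_S + \tfrac12(c_1^2 - 2c_2)$. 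Substituting the expressions for $c_1, c_2$ in terms of $\kl$ and the $C_i$ and simplifying gives the stated formula for $\chi(T)$; this is a routine but slightly lengthy expansion.

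Next, for $p_g$ and $q$ I would use $H^i(T,\omega_T) = H^i(S, f_*\omega_T) = H^i(S,\omega_S) \oplus H^i(S, \ke\tensor\omega_S)$, which immediately yields $p_g(T) = h^0(S,\ke\tensor\omega_S) + p_g(S)$ and $q(T) = h^1(S,\ke\tensor\omega_S) + q(S)$ (using $h^2(T,\omega_T) = h^0(\ko_T) = 1$ and Serre duality for the irregularity, noting $h^1(T,\omega_T) = h^1(T,\ko_T) = q(T)$). For $K_T^2$ I would use Noether's formula on $T$, $K_T^2 = 12\chi(T) - e(T)$, combined with the topological Euler number computed from the branched cover — or, more in the spirit of the paper, compute $K_T^2 = (\omega_T)^2$ intersection-theoretically on $\IP(\ke)$ using $\omega_T = \ko_{\IP}(1)\tensor\pi^*\omega_S$ restricted to $T$, where $[T] = 3\xi + \pi^*(\det\ke^\vee)$ with $\xi = c_1(\ko_{\IP}(1))$, and push forward via $\pi_*\xi = 1$, $\pi_*\xi^2 = c_1(\ke)$, together with the Grothendieck relation $\xi^2 = \pi^*c_1(\ke)\cdot\xi - \pi^*c_2(\ke)$. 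Either route reduces $K_T^2$ to an expression in $K_S^2$, $K_S.c_1(\ke)$, $c_1(\ke)^2$, $c_2(\ke)$, and $\chi(S)$; substituting Proposition \ref{prop: Cherncl} gives the displayed formula.

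For the pluricanonical data I would write $\omega_T^{\tensor m} = \ko_{\IP}(m)\tensor\pi^*\omega_S^m|_T$ and push forward along $f$ (equivalently along $\pi$, restricted from $\IP$ via the ideal sequence $0\to\ko_{\IP}(-3)\tensor\pi^*\det\ke\to\ko_{\IP}\to\ko_T\to 0$ twisted appropriately): $f_*\omega_T^{\tensor m} = \pi_*\big(\ko_{\IP}(m)\tensor\pi^*\omega_S^m\big) \big/ \pi_*\big(\ko_{\IP}(m-3)\tensor\pi^*(\omega_S^m\tensor\det\ke)\big)$ up to the relevant connecting maps, and since $\pi_*\ko_{\IP}(k) = \Sym^k\ke$ for $k\geq 0$ this gives the exact sequence $0\to\Sym^{m-3}\ke\tensor\det\ke^\vee\tensor\ko_{\IP}\ldots$; being careful, the correct statement is that there is an exact sequence $0 \to \Sym^{m-3}\ke\tensor\det\ke\tensor\omega_S^m \to \Sym^m\ke\tensor\omega_S^m \to f_*\omega_T^{\tensor m}\to 0$ (with the twist by $\det\ke$ coming from $\omega_{\IP/S}$). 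Taking cohomology and using $P_m(T) = h^0(S, f_*\omega_T^{\tensor m})$ yields the asserted inequality, with equality exactly when the obstruction $h^1(S,\Sym^{m-3}\ke\tensor\det\ke\tensor\omega_S^m)$ vanishes; for $m=2$ the correction term is absent, so $P_2(T) = h^0(\Sym^2\ke\tensor\omega_S^2)$ exactly, and $\chi(2K_T) = \chi(S, f_*\omega_T^{\tensor 2}) = \chi(\omega_S^2) + \chi(\ke\tensor\omega_S^2)$ is again pure Riemann–Roch on $S$, expanded via Proposition \ref{prop: Cherncl}.

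The one genuine point requiring care — the main obstacle — is getting the twist by $\det\ke$ in the pluricanonical pushforward sequence exactly right, i.e.\ correctly identifying $f_*\omega_T^{\tensor m}$ as the cokernel of a map between $\Sym^m\ke\tensor\omega_S^m$ and $\Sym^{m-3}\ke\tensor\det\ke\tensor\omega_S^m$; this follows from the structure of the Casnati–Ekedahl resolution of $\ko_T$ over $\IP(\ke)$ (equivalently, from $\omega_{\IP(\ke)} = \pi^*(\det\ke\tensor\omega_S)(-2)$ in our conventions) and from $[T]$ being cut out by a section of $\pi^*\det\ke^\vee(3)$. Everything else is bookkeeping: expanding the universal intersection-theoretic formulas on $\IP(\ke)$ and substituting the Chern classes from Proposition \ref{prop: Cherncl}, which I would organize by first deriving clean formulas in $c_1(\ke), c_2(\ke), K_S$ and only at the last step unpacking them into $\kl$ and the $C_i$.
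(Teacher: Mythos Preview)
Your overall strategy matches the paper's: compute $K_T^2$ by intersection theory on $\IP(\ke)$ using the Grothendieck relation, compute $\chi(T)$ via $\chi(\ko_S)+\chi(\ke^\vee)$ and Riemann--Roch, read off $p_g$ and $q$ from $f_*\omega_T$, and obtain the plurigenera from the twisted ideal sequence of $T\subset\IP$ pushed forward to $S$. The paper does exactly this; the only cosmetic difference is that it computes $\chi(\ke^\vee)$ via the line-bundle resolution \eqref{eq: split resolution of E} rather than by Riemann--Roch for the rank-two bundle directly.

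There is, however, one genuine slip. Your formula $\chi(2K_T)=\chi(\omega_S^2)+\chi(\ke\tensor\omega_S^2)$ is not correct: it would follow from $f_*\omega_T^{\tensor 2}\isom \omega_S^2\oplus(\ke\tensor\omega_S^2)$, but you yourself have just established (correctly, and in agreement with the paper) that $f_*\omega_T^{\tensor 2}\isom \Sym^2\ke\tensor\omega_S^2$. These are different rank-three bundles with different Chern classes, and a quick check on $S=\IP^2$ with $\ke=\ko(1)\oplus\ko(1)$ already shows the two Euler characteristics disagree ($9$ versus $22$). The correct route is either $\chi(2K_T)=\chi(\Sym^2\ke\tensor\omega_S^2)$ expanded by Riemann--Roch, or simply $\chi(2K_T)=\chi(T)+K_T^2$ by Riemann--Roch on $T$, which immediately gives the displayed expression once the first two formulas are in hand. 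Apart from this, your plan is sound; you might also make explicit the vanishing $R^1\pi_*\ko_\IP(m-3)=0$ for $m\geq 2$, which the paper invokes to guarantee that the pushed-forward sequence stays short exact.
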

\begin{proof}
  Let us put $a=c_1(\omega_S)\in H^2(S,\IR)$ and $u:=c_1(\ko_\IP(1))\in H^2(\IP,\IZ)$. Then we recall that we have the relation in $H^\bullet(\IP,\IZ)$
  \[ \pi^*c_2(\ke)-\pi^*c_1(\ke).u+u^2=0\]
  implying
  \[u^2=u.\pi^*c_1(\ke)-\pi^*c_2(\ke), \ \ u^3=u.\pi^*(c_1^2(\ke)-c_2(\ke)).\]
  Using this and \eqref{eq: pushforwardKT}, we compute:
  \begin{align*}
  K_T^2=&\pi^*\omega_S(1).\pi^*\omega_S(1).  \pi^*\det \ke^\vee (3)\\
  =&(\pi^*a+u)^2(-\pi^*c_1(\ke)+3u)\\
  =&u.\pi^*(-2c_1(\ke).a+3a^2)+u^2\pi^*(-c_1(\ke)+6a)+3u^3\\
  =&u.\pi^*\left(-2c_1(\ke).a+3a^2-c^2_1(\ke)+6a.c_1(\ke)+3c_1^2(\ke)-3c_2(\ke)\right)\\
  =&u.\pi^*\left(3a^2+4a.c_1(\ke)+2c_1^2(\ke)-3c_2(\ke)\right).
  \end{align*}
By applying the push forward map, using that $\pi_*u=1$ and Proposition \ref{prop: Cherncl}, we find:
\begin{align*}
K_T^2&=3K_S^2+4K_S.c_1(\ke)+2c_1^2(\ke)-3c_2(\ke)\\
&=3K_S^2+4K_S.(2\kl+\sum_iC_i)+(5\kl^2+5\kl.\sum_iC_i+2\sum_iC_i^2+\sum_{i<j}C_iC_j).  
\end{align*}

We now compute the holomorphic Euler characteristic using \eqref{eq: split resolution of E} and Riemann-Roch:
\begin{align*}
\chi(T)&=\chi(\ko_T) = \chi(S) + \chi(\ke^\vee) \\
&=\chi(S)+\sum_i\chi(\kl^\vee(-C_i))-\chi(\kl^\vee)\\
&=3\chi(S)+\frac12\left(\sum_i(\kl+C_i).(\kl+C_i+K_S)-\kl.(\kl+K_S)\right)\\
&=3\chi(S)+\kl^2+\kl.K_S+\kl.\sum_iC_i+\frac12 \sum_i(C_i^2+K_S.C_i).
\end{align*}

The formulas for $p_g$ and $q$ follow from \eqref{eq: pushforwardKT}. In order to compute $P_m$ for $m\geq 2$, note that since $T\subset \IP$ is given by a section of $\pi^*\det\ke^\vee(3)$, we have the following exact sequence:
\begin{equation*}
0\to\pi^*\det\ke\otimes\ko_{\IP}(m-3)\to\ko_{\IP}(m)\to\ko_T(m)\to 0.
\end{equation*}
Since for $m\geq 2$ we have $R^1\pi_*\ko_{\IP}(m-3)=0$, after applying $\pi_*$ to the above and using the projection formula, we find
the exact sequence:
\begin{equation*}
0\to\det\ke\otimes\Sym^{m-3}\ke\to\Sym^m\ke\to f_*\ko_T(m)\to 0
\end{equation*}
where we use the convention $\Sym^{-1}\ke=\pi_*\ko_\IP(-1)=0$.
Finally, in order to deduce $P_m$, one uses the above exact sequence, together with (ii) in the above proposition, giving
\begin{align*}
P_m&=h^0(S,f_*\omega^m_T)=h^0(S,\omega^m_S\otimes f_*\ko_T(m))\\
&\geq h^0(S,\Sym^m\ke\otimes\omega_S^m)-h^0(S,\Sym^{m-3}\ke\otimes\det\ke\otimes\omega_S^m).
\end{align*}
In particular, $P_2 = h^0(\Sym^2\ke\tensor \omega_S^2)$. 
  \end{proof}
 \begin{cor}\label{cor: minimal general type}
Let $f\colon T \to S$ be a special triple cover with Tschirnhausen bundle $\ke$. Then $T$ is minimal of general type if and only if.  
\[K_T^2>0,\; P_2(T) = h^0(\Sym^2\ke\tensor \omega_S^2)>0, \; h^1(T, \omega_T^2) = h^1(\Sym^2\ke\tensor \omega_S^2)=0.\]
 \end{cor}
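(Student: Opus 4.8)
The plan is to characterise minimality and general type via the standard numerical criteria for surfaces and then translate these into conditions on $\ke$ using the formulas of Corollary~\ref{cor: invariants}. Recall the classical fact that a smooth projective surface $T$ is minimal of general type if and only if $K_T^2>0$ and $P_2(T)>0$ (equivalently, $K_T$ is nef and big and there are no $(-1)$-curves; the criterion $K_T^2>0$ together with $P_2(T)\geq 2$, or just $P_m(T)\geq 2$ for some $m$, forces general type, and one checks that for minimal surfaces of general type $K_T^2 \geq 1$ and $P_2 = \chi + K_T^2 \geq 1$ with equality possible). So the heart of the statement is to show that, under the hypotheses, $P_2(T) = h^0(\Sym^2\ke\tensor\omega_S^2)$ exactly — which is already recorded in Corollary~\ref{cor: invariants} — and that $K_T^2 = K_T^2$ as computed there is the relevant intersection number; the cohomological vanishing $h^1(\omega_T^2)=0$ is what lets us run Riemann--Roch cleanly to identify $P_2$ and to relate it to $\chi(T)$.

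Concretely, first I would invoke Corollary~\ref{cor: invariants}: it gives $P_2(T) = h^0(S,\Sym^2\ke\tensor\omega_S^2)$ unconditionally (the $m=2$ case has no correction term since $\Sym^{-1}\ke = 0$), and via \eqref{eq: pushforwardKT} the identity $\omega_T^2 = \ko_\IP(2)\tensor\pi^*\omega_S^2|_T$ together with the pushforward computation gives $h^i(T,\omega_T^2) = h^i(S,\Sym^2\ke\tensor\omega_S^2)$ for all $i$, so the three hypotheses of the corollary are literally the conditions $K_T^2>0$, $P_2(T)>0$, $h^1(T,\omega_T^2)=0$. Next, for the ``if'' direction: from $P_2(T)>0$ and $h^1(T,\omega_T^2)=0$ we get by Riemann--Roch on $T$ that $P_2(T) = \chi(\ko_T) + K_T^2 + h^2(T,\omega_T^2) = \chi(\ko_T) + K_T^2 + p_g(T)$ (Serre duality identifies $h^2(\omega_T^2)$ with $h^0(\omega_T^{-1})=0$ once $T$ is not too negative, but more robustly one just uses $\chi(\omega_T^2) = \chi(\ko_T) + K_T^2$ and $h^1=0$); combined with $K_T^2>0$, the surface has a pluricanonical system of positive dimension and positive self-intersection of the canonical class, hence $\kappa(T)=2$ and, because $K_T$ is then automatically nef (a $(-1)$-curve $E$ would give $K_T.E = -1<0$, but $\omega_T^2$ effective and $K_T^2>0$ forces $K_T$ nef by the structure of the pluricanonical map — alternatively cite that $P_2\geq 1$ and $K_T^2\geq 1$ characterise minimal general type among smooth surfaces), $T$ is minimal. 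For the ``only if'' direction: if $T$ is minimal of general type then $K_T^2\geq 1>0$, and $P_2(T) = \chi(\ko_T)+K_T^2 \geq 1 > 0$ by Noether's inequality / the classification (indeed $\chi(\ko_T)\geq 1$ for minimal general type, or directly $P_2\geq 2$ by a theorem of Bombieri), and $h^1(T,\omega_T^2) = h^1(\Sym^2\ke\tensor\omega_S^2)=0$ is the Ramanujam/Mumford vanishing $h^1(T, \omega_T\tensor L)=0$ for $L = \omega_T$ nef and big.

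The main obstacle is making the ``$T$ minimal of general type'' $\iff$ ``$K_T^2>0$ and $P_2(T)>0$'' equivalence airtight without hand-waving: the delicate point is that $K_T^2>0$ plus $P_2(T)>0$ does not a priori exclude $T$ being, say, a blow-up of a general type surface — one needs that $h^0(\omega_T^2)>0$ together with $K_T^2>0$ already forces $K_T$ nef. I would handle this by appealing to the precise statement that for a smooth projective surface, $T$ is minimal of general type iff $K_T$ is nef and $K_T^2>0$ iff $K_T^2>0$ and $P_m(T)>0$ for all (equivalently, some) $m\geq 2$ — this is standard (see e.g. Bombieri, or Beauville's book), the key input being that on a non-minimal surface every pluricanonical divisor is a pullback from the minimal model plus a non-negative exceptional part, so $P_2$ is a birational invariant and $K_T^2 < K_{T_{\min}}^2$, forcing us onto the minimal model where $K^2>0$ and $P_2>0$ is exactly the general-type condition. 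With that classical fact cited, everything else is bookkeeping with Corollary~\ref{cor: invariants} and the identification $h^i(T,\omega_T^2) \cong h^i(S,\Sym^2\ke\tensor\omega_S^2)$, and the vanishing hypothesis is precisely what is needed to pin $P_2$ down numerically; no further computation is required.
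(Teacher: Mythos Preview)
There is a genuine gap in your ``if'' direction: you never actually use the hypothesis $h^1(T,\omega_T^2)=0$ to deduce minimality. Your argument tries to get minimality from $K_T^2>0$ and $P_2(T)>0$ alone, and you even state that ``$P_2\geq 1$ and $K_T^2\geq 1$ characterise minimal general type among smooth surfaces''. This is false: take any minimal surface of general type $T_{\min}$ with $K_{T_{\min}}^2\geq 2$ and blow up a point. The result has $P_2>0$ (a birational invariant) and $K^2 = K_{T_{\min}}^2 - 1 \geq 1$, yet it is not minimal. Your closing paragraph about ``the main obstacle'' is chasing a false equivalence; the third condition is not cosmetic, it is exactly what pins down minimality.

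Here is how the paper's argument works, and what is missing from yours. Once the first two conditions give $\kappa(T)=2$ (which you argue correctly), let $\sigma\colon T\to T_{\min}$ be the contraction to the minimal model. On $T_{\min}$ one has $h^1(\omega_{T_{\min}}^2)=0$ and $h^2(\omega_{T_{\min}}^2)=h^0(\omega_{T_{\min}}^{-1})=0$, so Riemann--Roch gives $P_2 = \chi(\ko_{T_{\min}}) + K_{T_{\min}}^2 = \chi(\ko_T) + K_{T_{\min}}^2$. On $T$ itself, $h^2(\omega_T^2)=h^0(\omega_T^{-1})=0$ as well (since $P_2>0$ and $K_T^2>0$ rule out an anti-effective canonical class), so Riemann--Roch reads $P_2 - h^1(\omega_T^2) = \chi(\ko_T) + K_T^2$. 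Subtracting,
\[
 h^1(T,\omega_T^2) \;=\; K_{T_{\min}}^2 - K_T^2,
\]
which is the number of points blown up. Hence $h^1(\omega_T^2)=0$ is precisely equivalent to $T=T_{\min}$. This is what the paper means by ``the last vanishing characterises minimality by Riemann--Roch''. Your ``only if'' direction is fine (modulo the slip $h^2(\omega_T^2)=p_g(T)$, which you immediately correct), and the identification $h^i(T,\omega_T^2)=h^i(S,\Sym^2\ke\tensor\omega_S^2)$ via $f_*\omega_T^2\cong\Sym^2\ke\tensor\omega_S^2$ is correct and useful. But the ``if'' direction needs the Riemann--Roch comparison above, not an appeal to a two-condition criterion that does not exist.
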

 \begin{proof}
The conditions are clearly satisfied for a minimal surface of general type. Conversely, the first two conditions ensure that $T$ is of general type. The last vanishing characterises minimality by Riemann-Roch.
 \end{proof}
 
 \section{Some necessary and sufficient conditions for admissibility}\label{sect: suffcient and necessary conditions}

We now address the generally difficult question, which nearly split bundles are admissible, that is, occur as Tschirnhausen bundles of a smooth triple cover $f\colon T \to S$.  
We continue to use the same notation, see Remark \ref{rem: IP and tilde IP}.

We start by settling when a potential cover would be connected. 
 \begin{lem}[connectedness condition]\label{lem: condition connectedness}
If $H^0(S, \ke^\vee) =0$, then
\begin{align}\label{eq:conn}
 H^0(S,\kl^\vee(-C_i-C_j))&=0, \ \ 1\leq i<j\leq 3.
 \end{align}
 Conversely, if \eqref{eq:conn} holds and also 
 \[H^0(S,\kl^\vee(-C_1-C_2)|_{C_3})=0 \text{ or } H^1(S, \kl^\vee(-C_1-C_2-C_3)) = 0,\] 
 then $H^0(S,\ke^\vee)=0$. 
  \end{lem}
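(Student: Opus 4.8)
The natural starting point is the dual of the defining sequence \eqref{eq: split resolution of E}, which reads
\[
\begin{tikzcd}
0 \rar & \ke^\vee \rar & \bigoplus_{i=1}^3 \kl^\vee(-C_i) \rar{\alpha^\vee} & \kl^\vee \rar & 0,
\end{tikzcd}
\]
the surjectivity on the right coming from condition $(ii)$ (the triple intersection being empty). Taking cohomology gives the exact sequence
\[
0 \to H^0(S,\ke^\vee) \to \bigoplus_{i=1}^3 H^0(S,\kl^\vee(-C_i)) \to H^0(S,\kl^\vee).
\]
So the first implication is almost immediate: a nonzero section of $\kl^\vee(-C_i-C_j)$ would, after multiplication by $\gamma_k$ (the equation of $C_k$, where $\{i,j,k\}=\{1,2,3\}$), land inside $\kl^\vee(-C_i)$ in a way that is killed by $\alpha^\vee$ — more precisely, one checks that the sections $(\gamma_j s, -\gamma_i s, 0)$ and its permutations lie in the kernel of $\alpha^\vee$, hence come from $H^0(S,\ke^\vee)$. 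Thus $H^0(S,\ke^\vee)=0$ forces $H^0(S,\kl^\vee(-C_i-C_j))=0$ for all $i<j$, giving \eqref{eq:conn}.

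For the converse I would run the same long exact sequence but now analyse the kernel of the map $\bigoplus_i H^0(\kl^\vee(-C_i)) \to H^0(\kl^\vee)$ directly. A triple $(s_1,s_2,s_3)$ of sections with $\gamma_1 s_1 + \gamma_2 s_2 + \gamma_3 s_3 = 0$ in $H^0(\kl^\vee)$ must be shown to vanish under \eqref{eq:conn}. The key algebraic input is that, since no two of the $C_i$ share a component, the $\gamma_i$ form (pairwise) regular sequences, so a syzygy $\gamma_1 s_1 + \gamma_2 s_2 + \gamma_3 s_3 = 0$ is generated by the Koszul syzygies: $s_1$ must be divisible by $\gcd$-type combinations of $\gamma_2,\gamma_3$, forcing $s_1$ to be a section of $\kl^\vee(-C_2-C_3)$ up to the Koszul relations, and similarly for $s_2, s_3$. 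Concretely, I expect to show that such a syzygy triple lies in the image of the second Koszul map from $\bigoplus_{i<j} H^0(\kl^\vee(-C_i-C_j))$ — but here is where the extra hypothesis enters, because that Koszul-exactness statement on global sections requires the vanishing of an $H^1$, and the sheaf-theoretic syzygy module is the twisted Koszul complex of Proposition~\ref{prop: resolution ideal}.

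To make this precise I would set $Z = C_2\cap C_3$ and use sequence \eqref{eq: Serre seq for E} in its dual form, or equivalently work with the Koszul resolution of $\ki_Z$ twisted by $\kl^\vee(C_1-C_2-C_3)$ or similar; dualizing \eqref{eq: Serre seq for E} gives
\[
0 \to \kl^\vee(-C_2-C_3) \to \ke^\vee \to \kl^\vee(-C_1) \to \text{(torsion supported on } Z),
\]
more carefully one has $0\to \kl^\vee(-C_2-C_3)\otimes\ki_Z^{\text{something}}$ — so I would instead argue from the exact sequence relating $\ke^\vee$, $\kl^\vee(-C_1)$ and $\kl^\vee(-C_2-C_3)$ obtained by dualizing \eqref{eq: Serre seq for E}, namely an extension of $\ker$ of the $\ki_Z$-twist. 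The cleanest route: from \eqref{eq: Serre seq for E}, $0\to \kl(C_1)\to\ke\to\kl(C_2+C_3)\ki_Z\to 0$, dualize to get $0\to\kl^\vee(-C_2-C_3)\to\ke^\vee\to \shom(\kl(C_1)\ki_Z',\ko)\to\cdots$; rather than chase this, I would use the three-term sequence from \eqref{eq: split resolution of E} together with the observation that $\alpha^\vee$ factors through the restriction $\kl^\vee(-C_1)\oplus\kl^\vee(-C_2)\to \kl^\vee(-C_1-C_2-C_3)\otimes(\text{stuff on }C_3)$. The two alternative hypotheses correspond exactly to two ways of guaranteeing the needed vanishing: either $H^0(\kl^\vee(-C_1-C_2)|_{C_3})=0$ kills the obstruction on the restriction to $C_3$ after one more exact sequence $0\to\kl^\vee(-C_1-C_2-C_3)\to\kl^\vee(-C_1-C_2)\to\kl^\vee(-C_1-C_2)|_{C_3}\to 0$, or $H^1(\kl^\vee(-C_1-C_2-C_3))=0$ makes the connecting map in that sequence surjective so that a class lifting to $H^0$ exists; combined with \eqref{eq:conn} ($H^0(\kl^\vee(-C_1-C_2))=0$, etc.), this pins the syzygy triple down to zero.

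\textbf{Main obstacle.} The routine part is the first implication and the bookkeeping of long exact sequences; the delicate point is organizing the converse so that the single extra hypothesis (in its two alternative forms) is visibly the \emph{only} thing needed beyond \eqref{eq:conn}. I expect the cleanest formulation is to compute $H^0(\ke^\vee)$ via the hypercohomology spectral sequence of the dual of \eqref{eq: split resolution of E} (a two-term complex of split bundles), or equivalently via a diagram like \eqref{diag: resolution / Serre seq / koszul res}, isolating $H^0(\kl^\vee(-C_1-C_2)|_{C_3})$ and the connecting homomorphism $H^0(\kl^\vee(-C_1-C_2)|_{C_3})\to H^1(\kl^\vee(-C_1-C_2-C_3))$ as the exact obstruction; then each of the two stated vanishings forces that contribution to die. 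Getting the indices and twists in the restriction-to-$C_3$ sequence exactly right — and making sure the asymmetry in $C_3$ versus $C_1,C_2$ in the hypothesis is consistent with the permutation symmetry of \eqref{eq:conn} — will be the part requiring the most care.
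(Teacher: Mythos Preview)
Your approach is essentially the same as the paper's: both start by dualising \eqref{eq: split resolution of E}, both deduce \eqref{eq:conn} from the Koszul syzygies among the $\gamma_i$, and both handle the converse by restricting to $C_3$ and invoking the sequence $0\to\kl^\vee(-C_1-C_2-C_3)\to\kl^\vee(-C_1-C_2)\to\kl^\vee(-C_1-C_2)|_{C_3}\to 0$. The paper organises the converse more cleanly than your plan does: rather than chasing syzygies or dualising \eqref{eq: Serre seq for E}, it simply observes that, once \eqref{eq:conn} holds, $H^0(\ke^\vee)=0$ is equivalent to the image of $H^0(\kl^\vee(-C_3))$ in $H^0(\kl^\vee)$ being disjoint from that of $H^0(\kl^\vee(-C_1))\oplus H^0(\kl^\vee(-C_2))$, and then writes a single commutative diagram in which this becomes injectivity of the composite map $\psi$ from the latter space into $H^0(\kl^\vee|_{C_3})$; the vanishing of $H^0(\kl^\vee(-C_1-C_2)|_{C_3})$ is then exactly the kernel of the bottom row, and the $H^1$ hypothesis implies it via the restriction sequence combined with $H^0(\kl^\vee(-C_1-C_2))=0$. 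Your detours through dualising \eqref{eq: Serre seq for E} and hypercohomology are unnecessary, and your phrasing of how the $H^1$ vanishing acts (``makes the connecting map surjective so that a class lifting to $H^0$ exists'') is muddled---what you actually want is that the connecting map $H^0(\kl^\vee(-C_1-C_2)|_{C_3})\to H^1(\kl^\vee(-C_1-C_2-C_3))$ is injective (since $H^0(\kl^\vee(-C_1-C_2))=0$), so zero target forces zero source.
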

\begin{proof}
 Dualising \eqref{eq: split resolution of E} and taking cohomology, we get the exact sequence
 \[ 0 \to H^0(S,\ke^\vee) \to \bigoplus H^0(S,\kl^\vee(-C_i))\to H^0(S,\kl^\vee).\]
So clearly $H^0(S,\ke^\vee)=0$ iff the last map is injective. In particular, we must have that the last map in the exact sequence
\[ 0 \to H^0(S,\kl^\vee(-C_i-C_j)) \to  H^0(S,\kl^\vee(-C_i))\oplus H^0(S,\kl^\vee(-C_j))\to H^0(S,\kl^\vee)\]
is injective for any $1\leq i<j\leq 3$, which is equivalent to \eqref{eq:conn}. 

Supposing now that $\eqref{eq:conn}$ holds, we have that $H^0(S,\ke^\vee)=0$ iff the image of $H^0(S,\kl^\vee(-C_3))$ in $H^0(S,\kl^\vee)$ does not intersect the image of $H^0(S,\kl^\vee(-C_1))\oplus H^0(S,\kl^\vee(-C_2))$. This is further equivalent to the map $\psi$ in the following  commutative diagram to be injective:
\begin{small}
\begin{equation*}
{
\begin{tikzcd}
   & 0 \dar\rar & H^0(\kl^\vee(-C_3)) \dar{\gamma_3} \\
0 \dar\rar & H^0(\kl^\vee(-C_1))\oplus H^0(\kl^\vee(-C_2)) \rar{(\gamma_1,\gamma_2)} \dar\arrow[rd, "\psi"] & H^0(\kl^\vee)\dar\\
H^0(\kl^\vee(-C_1-C_2)|_{C_3})\rar\dar & H^0(\kl^\vee(-C_1)|_{C_3})\oplus H^0(\kl^\vee(-C_2)|_{C_3}) \rar & H^0(\kl^\vee|_{C_3})\\
H^1(\kl^\vee(-C_1-C_2-C_3)) & &
\end{tikzcd}}
 \end{equation*}
 \end{small} 
 Hence, if $H^0(S,\kl^\vee(-C_1-C_2)|_{C_3})=0$, then $\psi$ is injective, as a composition of two injective maps. Finally, $H^0(S,\kl^\vee(-C_1-C_2)|_{C_3})=0$ as soon as $H^1(S,\kl^\vee(-C_1-C_2-C_3))=0$.
\end{proof}

Before we go on, let us distinguish several ways in which $\ke$ can be admissible.
\begin{defin}
 Let $\ke$ be an nearly split bundle. We say that $\ke$ is 
 \begin{enumerate}
  \item trivially admissible if $\Sym^3\tilde \ke\tensor \det \ke ^\vee$ is globally generated;
  \item gg-admissible if $\Sym^3\ke\tensor \det \ke ^\vee$ is globally generated; 
  \item gci-admissible if there exists a smooth global complete intersection special cover in the sense of Definition \ref{def: ci cover}.
 \end{enumerate}
\end{defin}

\begin{prop}\label{prop: implications admissibility}
 The following implications hold:
 \[\begin{tikzcd}
  \text{ trivially admissible} \rar[Rightarrow] \dar[Rightarrow] & \text{gg-admissible}\dar[Rightarrow] \\
  \text{gci-admissible} \rar[Rightarrow] & \text{admissible} \lar[Rightarrow, bend left]{\text{if }H^1(\tilde \IP, \tilde \pi^*(\det \ke^\vee \tensor \kl) (2))= 0}
 \end{tikzcd}.
\]
\end{prop}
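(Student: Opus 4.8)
The plan is to verify each of the five implications in the diagram, most of which follow immediately from Theorem~\ref{thm: CE} and the set-up in Remark~\ref{rem: IP and tilde IP}. First, ``trivially admissible $\Rightarrow$ gg-admissible'': since $\ke$ is a quotient of $\tilde\ke$, the bundle $\Sym^3\ke\tensor\det\ke^\vee$ is a quotient of $\Sym^3\tilde\ke\tensor\det\ke^\vee$, and a quotient of a globally generated sheaf is globally generated. Second, ``gg-admissible $\Rightarrow$ admissible'': this is exactly the last sentence of Theorem~\ref{thm: CE}, provided we also know $H^0(S,\ke^\vee)=0$; but global generation of $\Sym^3\ke\tensor\det\ke^\vee$ forces, fibrewise, that the associated section cuts out length-three subschemes, and in particular $\ke$ must have no sub-line-bundle contradicting $H^0(\ke^\vee)=0$ --- more carefully, one observes that a globally generated $\Sym^3\ke\tensor\det\ke^\vee$ together with a general section directly produces a smooth connected triple cover via Theorem~\ref{thm: CE}, and connectedness of $T$ is equivalent to $H^0(S,\ke^\vee)=0$ by the correspondence there. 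Third, ``trivially admissible $\Rightarrow$ gci-admissible'': if $\Sym^3\tilde\ke\tensor\det\ke^\vee$ is globally generated, then $\tilde\pi^*(\det\ke^\vee)(3)$ is globally generated on $\tilde\IP$ (again by pushforward, $H^0(\tilde\IP,\tilde\pi^*\det\ke^\vee(3))\isom H^0(S,\Sym^3\tilde\ke\tensor\det\ke^\vee)$), so by Bertini a general section $\tilde\eta$ has smooth zero locus $\tilde T$, and then $T=\tilde T\cap\IP$ is smooth of the right codimension provided the general member meets $\IP$ transversally; here one uses that the linear system is base-point free and that $\IP\subset\tilde\IP$ is smooth, so a general $\tilde T$ is transverse to $\IP$, giving a smooth gci cover (connectedness again via $H^0(\ke^\vee)=0$, which is implied as in the previous step). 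Fourth, ``gci-admissible $\Rightarrow$ admissible'': immediate, a gci special cover is in particular a smooth connected triple cover with Tschirnhausen bundle $\ke$, by Definition~\ref{def: ci cover} and the discussion preceding it.

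The one substantive implication is the conditional arrow ``admissible $\Rightarrow$ gci-admissible if $H^1(\tilde\IP,\tilde\pi^*(\det\ke^\vee\tensor\kl)(2))=0$''. Here I would argue as follows. If $\ke$ is admissible, there is a smooth connected $T\subset\IP$ defined by a section $\eta\in H^0(\IP,\pi^*\det\ke^\vee(3))$. By Remark~\ref{rem: IP and tilde IP}, $\IP\subset\tilde\IP$ is a divisor cut out by a section of $\ko_{\tilde\IP}(1)\tensor\tilde\pi^*\kl^\vee$, so we have the ideal-sheaf sequence
\begin{equation*}
0\to\tilde\pi^*(\det\ke^\vee\tensor\kl)(2)\to\tilde\pi^*\det\ke^\vee(3)\to \ko_\IP\tensor\pi^*\det\ke^\vee(3)\to 0
\end{equation*}
on $\tilde\IP$, obtained by twisting $0\to\ko_{\tilde\IP}(-1)\tensor\tilde\pi^*\kl\to\ko_{\tilde\IP}\to\ko_\IP\to 0$ by $\tilde\pi^*\det\ke^\vee(3)$ and using $\ko_{\tilde\IP}(1)|_\IP=\ko_\IP(1)$. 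Taking cohomology, the restriction map $H^0(\tilde\IP,\tilde\pi^*\det\ke^\vee(3))\to H^0(\IP,\pi^*\det\ke^\vee(3))$ is surjective precisely because the obstruction group $H^1(\tilde\IP,\tilde\pi^*(\det\ke^\vee\tensor\kl)(2))$ vanishes by hypothesis. Hence $\eta$ lifts to some $\tilde\eta\in H^0(\tilde\IP,\tilde\pi^*\det\ke^\vee(3))$, and $\tilde T:=Z(\tilde\eta)$ satisfies $\tilde T\cap\IP=Z(\eta)=T$, which is smooth; thus $T$ is a smooth gci cover and $\ke$ is gci-admissible.

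The main obstacle --- or rather the only point requiring care --- is the transversality in the ``trivially admissible $\Rightarrow$ gci-admissible'' step: one must ensure that for a base-point-free linear system on $\tilde\IP$ a general member $\tilde T$ is smooth \emph{and} meets the fixed smooth subvariety $\IP$ transversally, so that $T=\tilde T\cap\IP$ is itself smooth and of the right codimension. This is a standard Bertini-type argument (the restricted system $|\tilde\pi^*\det\ke^\vee(3)|_\IP|$ is also base-point free, being generated by the images of global sections, so its general member is smooth away from the base locus, which is empty), but it is the place where the geometry, as opposed to pure cohomology bookkeeping, actually enters. Everything else is a formal consequence of Theorem~\ref{thm: CE}, the pushforward identifications $\Phi_3$ and its analogue on $\tilde\IP$, and the divisor sequence for $\IP\subset\tilde\IP$ from Remark~\ref{rem: IP and tilde IP}.
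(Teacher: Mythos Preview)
Your proof is correct and follows essentially the same approach as the paper: the same surjection $\Sym^3\tilde\ke\tensor\det\ke^\vee\onto\Sym^3\ke\tensor\det\ke^\vee$ for trivially $\Rightarrow$ gg, Theorem~\ref{thm: CE} for gg $\Rightarrow$ admissible, Bertini applied to the base-point-free system on $\tilde\IP$ for trivially $\Rightarrow$ gci, and the identical twisted restriction sequence for the conditional reverse arrow. The paper is terser---it dispatches the Bertini step in one line (smoothness of $\tilde T\cap\IP$ directly, without separating transversality) and does not raise the connectedness issue at all---but the substance is the same.
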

\begin{proof}
  Since there is a surjection $\Sym^3\tilde \ke \tensor \det \ke^\vee \onto \Sym^3 \ke \tensor \det \ke^\vee $, the latter is globally generated if the former is so. Thus trivially admissible implies gg-admissible, which in turn implies admissible by Theorem \ref{thm: CE}. 
  
  To understand gci-admissibility, which implies admissibility by definition, we look at the twisted  restriction sequence 
 \[ 0 \to \tilde \pi^*(\kl\tensor \det \ke^\vee) (2) \to \tilde \pi^* \det \ke ^\vee (3) \to \pi^* \det \ke ^\vee (3) \to 0. \]
If $\ke$ is trivially admissible, then the bundle in the middle is globally generated and by Bertini \cite{Bertini} there is a smooth intersection $T = \tilde T \cap \IP$, so $\ke $ is gci-admissible.  

Clearly, if the given $H^1$ vanishes, then every $\eta \in H^0(\IP, \pi^* \det \ke ^\vee (3))$ lifts to a section defined on $\tilde\IP$, so the converse implication in the lower row holds. 
\end{proof}

For later reference we  the following. 
\begin{lem}\label{lem: consequences of splitting}
 We have: 
 \begin{enumerate}
  \item The bundle $\ke$ is trivially admissible if and only if for all $a_1 + a_2 + a_3 = 3$ the line bundle $\kl\left(\sum_i (a_i-1) C_i\right)$ is globally generated. 
  \item If for all  $a_1 + a_2 + a_3 = 3$ we have $H^1(S, \kl\left(\sum_i (a_i-1) C_i\right))=0$, then every triple cover with Tschirnhausen bundle $\ke$ is gci. 
 \end{enumerate}
\end{lem}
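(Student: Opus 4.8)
The plan is to reduce both statements to the elementary fact that a finite direct sum of line bundles is globally generated, respectively has vanishing $H^{1}$, exactly when each of its summands is, after decomposing the bundles in question into line bundles by means of $\tilde\ke=\bigoplus_{i=1}^{3}\kl(C_i)$ together with the identity $\det\ke=\kl^{2}(C_1+C_2+C_3)$ of Proposition \ref{prop: Cherncl}.

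For part (i) I would simply compute, the sums running over non-negative triples,
\[ \Sym^{3}\tilde\ke\tensor\det\ke^{\vee}=\bigoplus_{a_1+a_2+a_3=3}\kl(C_1)^{a_1}\tensor\kl(C_2)^{a_2}\tensor\kl(C_3)^{a_3}\tensor\det\ke^{\vee}=\bigoplus_{a_1+a_2+a_3=3}\kl\Big(\tsum_i(a_i-1)C_i\Big). \]
Since $H^{0}$ of a direct sum and its evaluation morphism both split as direct sums, the left-hand bundle is globally generated if and only if each summand on the right is; as trivial admissibility is by definition the global generation of $\Sym^{3}\tilde\ke\tensor\det\ke^{\vee}$, this is precisely statement (i).

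For part (ii), recall from Definition \ref{def: ci cover} that a triple cover with Tschirnhausen bundle $\ke$, defined by a section $\eta\in H^{0}(\IP,\pi^{*}\det\ke^{\vee}(3))$, is gci exactly when $\eta$ lifts to a section over $\tilde\IP$; hence \emph{every} such cover is gci if and only if the restriction map $H^{0}(\tilde\IP,\tilde\pi^{*}\det\ke^{\vee}(3))\to H^{0}(\IP,\pi^{*}\det\ke^{\vee}(3))$ is surjective. Since $\tilde\pi$ is a $\IP^{2}$-bundle and $\pi$ a $\IP^{1}$-bundle, $\ko(3)$ has no higher direct images along either, and $\tilde\pi_{*}\ko_{\tilde\IP}(3)=\Sym^{3}\tilde\ke$, $\pi_{*}\ko_{\IP}(3)=\Sym^{3}\ke$; so after applying $\tilde\pi_{*}$ (and using $\ko_{\tilde\IP}(1)|_{\IP}=\ko_{\IP}(1)$) this restriction map is identified with the map $H^{0}(S,\Sym^{3}\tilde\ke\tensor\det\ke^{\vee})\to H^{0}(S,\Sym^{3}\ke\tensor\det\ke^{\vee})$ induced by the surjection $\Sym^{3}\tilde\ke\onto\Sym^{3}\ke$ of sequence \eqref{eq: symmetric power of nearly split sequence} with $k=3$. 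Twisting that sequence by $\det\ke^{\vee}$, surjectivity on global sections follows as soon as $H^{1}(S,\kl\tensor\Sym^{2}\tilde\ke\tensor\det\ke^{\vee})=0$ — equivalently, via the structure sequence of $\IP\subset\tilde\IP$ (whose ideal sheaf is $\ko_{\tilde\IP}(-1)\tensor\tilde\pi^{*}\kl$), this is the vanishing of $H^{1}(\tilde\IP,\tilde\pi^{*}(\kl\tensor\det\ke^{\vee})(2))$ already isolated in Proposition \ref{prop: implications admissibility}.

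It then remains to decompose $\Sym^{2}\tilde\ke=\bigoplus_{b_1+b_2+b_3=2}\kl(C_1)^{b_1}\tensor\kl(C_2)^{b_2}\tensor\kl(C_3)^{b_3}$ and twist by $\kl\tensor\det\ke^{\vee}=\kl^{-1}(-C_1-C_2-C_3)$, exactly as in part (i); this exhibits $H^{1}(S,\kl\tensor\Sym^{2}\tilde\ke\tensor\det\ke^{\vee})$ as the direct sum, over non-negative triples with $b_1+b_2+b_3=2$, of the groups $H^{1}\big(S,\kl(\tsum_i(b_i-1)C_i)\big)$, each of which vanishes by hypothesis, whence the map is surjective and part (ii) follows. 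I do not expect any genuine obstacle here: the argument is pure bookkeeping with split bundles once the projective-bundle conventions of Remark \ref{rem: IP and tilde IP} are in force, the only points to keep straight being that $\ko_{\tilde\IP}(1)|_{\IP}=\ko_{\IP}(1)$, that $\tilde\pi$ really is a $\IP^{2}$-bundle (so $\tilde\pi_{*}\ko_{\tilde\IP}(m)=\Sym^{m}\tilde\ke$ with vanishing higher direct images for $m\ge 0$), and that the ideal sheaf of $\IP$ inside $\tilde\IP$ is $\ko_{\tilde\IP}(-1)\tensor\tilde\pi^{*}\kl$.
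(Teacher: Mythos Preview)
Your approach is exactly the paper's: decompose $\tilde\ke$ and its symmetric powers into line bundle summands and test global generation (for (i)) respectively $H^1$-vanishing (for (ii)) summand by summand. You have simply written out the bookkeeping that the paper's one-sentence proof leaves implicit, and your identification of the restriction map with the $H^0$-map coming from \eqref{eq: symmetric power of nearly split sequence} (equivalently, with the $H^1$ from Proposition~\ref{prop: implications admissibility}) is correct.

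One point you pass over too quickly in part (ii): you correctly compute that $\kl\tensor\Sym^{2}\tilde\ke\tensor\det\ke^{\vee}$ splits as the direct sum of the line bundles $\kl\bigl(\sum_i(b_i-1)C_i\bigr)$ over non-negative triples with $b_1+b_2+b_3=2$, and then assert that each summand has vanishing $H^1$ ``by hypothesis''. But the hypothesis as stated ranges over triples with $a_1+a_2+a_3=3$, and the two index sets give genuinely different line bundles (for instance $\kl(-C_3)$ occurs for sum $2$ but never for sum $3$). This is an imprecision in the lemma's statement rather than in your strategy---the paper's proof is terse enough to obscure the same mismatch---and it is harmless in the only application (Theorem~\ref{thm: admissible triple planes}, where $S=\IP^2$ and every line bundle has vanishing $H^1$); still, you should flag the discrepancy rather than absorb it into ``by hypothesis''.
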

\begin{proof}
 This comes from our definition of bundle $\tilde \ke$, defined in Remark \ref{rem: IP and tilde IP}, and hence all its symmetric powers are direct sums of line bundles and both global generation and vanishing of $H^1$ can be tested on summands. 
\end{proof}
If we are not in the fortunate situation of trivial admissibility, we can give some necessary criteria. 
 \begin{lem}[Necessary conditions for gg-admissibility]\label{lem: nec for easy existence}
 If $\, \Sym^3\ke \otimes \det\ke^\vee \,$ is globally generated with $h^0(\ke^\vee) = 0$, hence admissible, then for all curves $D\subset S$, the bundle $\Sym^3\ke\otimes\det\ke^\vee|_D$ is globally generated. In particular, for all $\{i,j,k\} = \{1,2,3\}$ the bundle 
 \[\left(\det\ke^\vee \tensor \Sym^3\left( \kl(C_i+C_j) \oplus \kl(C_k)\right) \right)|_{C_k}\]
  is globally generated. 
  
 \end{lem}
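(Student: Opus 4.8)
The plan is to prove the two assertions separately; neither is difficult once set up correctly, and the entire content of the second one is the observation that $\ke$ restricted to $C_k$ splits. For the first assertion, I would just use that global generation passes to restrictions: if the evaluation map $H^0(S,\kf)\tensor\ko_S\onto\kf$ is onto for $\kf=\Sym^3\ke\tensor\det\ke^\vee$, then tensoring with $\ko_D$ (right exactness) gives a surjection $H^0(S,\kf)\tensor\ko_D\onto\kf|_D$, which factors through $H^0(D,\kf|_D)\tensor\ko_D$; hence $\kf|_D$ is globally generated for every curve $D\subset S$. Only the global generation of $\Sym^3\ke\tensor\det\ke^\vee$ is used here — the hypothesis $h^0(\ke^\vee)=0$ enters merely to make $\ke$ admissible via Theorem~\ref{thm: CE}.

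For the second assertion, fix $\{i,j,k\}=\{1,2,3\}$ (the case $C_k=\emptyset$ being vacuous). The claim to establish is the isomorphism
\[ \ke|_{C_k}\;\cong\;\bigl(\kl(C_i+C_j)\oplus\kl(C_k)\bigr)\big|_{C_k}. \]
To prove it, I would tensor the resolution \eqref{eq: split resolution of E} with $\ko_{C_k}$: by right exactness $\ke|_{C_k}$ is the cokernel of $\kl|_{C_k}\xrightarrow{(\gamma_1,\gamma_2,\gamma_3)|_{C_k}}\tilde\ke|_{C_k}$. Since $\gamma_k$ is a defining section of $C_k$, it restricts to zero on $C_k$, so this map lands in the summand $\bigl(\kl(C_i)\oplus\kl(C_j)\bigr)|_{C_k}\subset\tilde\ke|_{C_k}$, whence
\[ \ke|_{C_k}\;=\;\Bigl(\bigl(\kl(C_i)\oplus\kl(C_j)\bigr)|_{C_k}\big/\operatorname{im}(\gamma_i,\gamma_j)\Bigr)\;\oplus\;\kl(C_k)|_{C_k}. \]
At every point $p\in C_k$ one cannot have $\gamma_i(p)=\gamma_j(p)=0$, as $C_1\cap C_2\cap C_3=\emptyset$, so $(\gamma_i,\gamma_j)|_{C_k}$ is a subbundle inclusion and the first summand is a line bundle; comparing determinants on $C_k$ identifies it with $\kl(C_i+C_j)|_{C_k}$, which gives the displayed isomorphism.

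Finally I would combine the two parts: since $\det\bigl(\kl(C_i+C_j)\oplus\kl(C_k)\bigr)=\kl^2(C_1+C_2+C_3)=\det\ke$, twisting the isomorphism by $\det\ke^\vee$ and taking $\Sym^3$ yields
\[ \bigl(\det\ke^\vee\tensor\Sym^3(\kl(C_i+C_j)\oplus\kl(C_k))\bigr)\big|_{C_k}\;\cong\;\bigl(\det\ke^\vee\tensor\Sym^3\ke\bigr)\big|_{C_k}, \]
and the right-hand side is globally generated by the first assertion applied to $D=C_k$. I expect no serious obstacle here; the only point requiring a little care is the determinant identification when $C_k$ is non-reduced, but this is harmless, as the relevant cokernel is locally free of rank one and hence equal to its own determinant. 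The real insight is simply that restriction to $C_k$ kills $\gamma_k$ and thereby splits the defining resolution of $\ke$.
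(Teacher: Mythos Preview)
Your proof is correct and follows the same approach as the paper's: the paper's proof simply says the first assertion is obvious and that for the second one ``we just need to note what happens to \eqref{eq: split resolution of E} upon restricting to $C_k$,'' which is precisely the splitting you have carefully written out. You have filled in the details the paper leaves implicit, including the determinant identification of the quotient line bundle.
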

\begin{proof}
 The first assertion is obvious, while for the second one we just need to note what happens to \eqref{eq: split resolution of E} upon restricting to $C_k$. 
\end{proof}

We include a general necessary condition for existence.
\begin{prop}[Necessary condition for an irreducible cover]\label{prop: necessary condition irreducibility}
Let $\eta$ be a section in $H^0(S, \Sym^3(\ke) \tensor \det \ke^\vee)$. If $\eta$ defines an irreducible triple cover $f\colon T \to S$, then for all $i\neq j\neq k\neq i$
\[ H^0(S, \kl(2(C_j+C_k)-C_i)\tensor \ki^3_{C_j\cap C_k})\neq 0.\]
\end{prop}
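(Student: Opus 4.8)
The plan is to use the Serre-type sequence \eqref{eq: Serre seq for E} in the form that isolates the curve $C_i$, namely
\[ 0 \to \kl(C_i) \to \ke \to \kl(C_j+C_k)\,\ki_Z \to 0, \qquad Z = C_j\cap C_k, \]
and feed it through the cubing operation. Concretely, I would take the symmetric cube of this sequence, producing a filtration of $\Sym^3\ke$ whose graded pieces are $\kl^{3}(3C_i)$, $\kl^{3}(2C_i+C_j+C_k)\,\ki_Z$, $\kl^{3}(C_i+2C_j+2C_k)\,\ki_Z^{2}$ and $\kl^{3}(3C_j+3C_k)\,\ki_Z^{3}$ — this is exactly the pattern already visible in \eqref{eq: square of Serre sequence}--\eqref{eq: cube of Serre sequence}, just organised around the index $i$. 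Twisting by $\det\ke^\vee = \kl^{-2}(-C_1-C_2-C_3)$ and using $C_1+C_2+C_3 = C_i+C_j+C_k$, the bottom (most positive) graded piece of $\Sym^3\ke\tensor\det\ke^\vee$ is precisely $\kl(2(C_j+C_k)-C_i)\tensor\ki_Z^{3}$.

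The geometric input is: if $\eta$ defines an irreducible cover $T$, then $T$ must meet the section $\sigma_i\colon S\hookrightarrow\IP(\ke)$ corresponding to the quotient line bundle $\kl(C_j+C_k)\ki_Z \twoheadrightarrow$ (more precisely, the sub-line-bundle $\kl(C_i)\hookrightarrow\ke$) — otherwise that section would be a component of $T$ disjoint from the rest, or $\eta$ would be divisible by the linear form cutting out $\sigma_i$, contradicting irreducibility (and connectedness, hence Lemma~\ref{lem: condition connectedness}). Restricting $\Phi_3(\eta)\in H^0(\IP,\pi^*\det\ke^\vee(3))$ to this section gives a nonzero section of the line bundle which is the top graded quotient above, i.e. of $\kl(2(C_j+C_k)-C_i)\tensor\ki_Z^{3}$ — wait, one must be careful which sub/quotient the section $\sigma_i$ detects. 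The cleaner phrasing: the restriction map $H^0(\IP,\pi^*\det\ke^\vee(3)) \to H^0(S, \text{(top piece)})$ is surjective, and $\eta$ cannot lie in the kernel, because the kernel consists exactly of sections vanishing on $\sigma_i$, i.e. divisible by the defining equation of $\sigma_i\subset\IP$, which would force $\sigma_i\subset T$ and make $T$ reducible. Hence the image of $\eta$ is a nonzero element of $H^0(S,\kl(2(C_j+C_k)-C_i)\tensor\ki^3_{C_j\cap C_k})$, which is the claim.

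The step I expect to be the main obstacle is pinning down, with correct signs and twists, \emph{which} of the four graded pieces of the filtered $\Sym^3\ke\tensor\det\ke^\vee$ is the image of $\eta$ under restriction to $\sigma_i$, and verifying that ``$\eta$ restricted to $\sigma_i$ is nonzero'' is genuinely equivalent to ``$\sigma_i\not\subset T$'' (as opposed to, say, $T$ being tangent to $\sigma_i$). The first point is bookkeeping with \eqref{eq: cube of Serre sequence} and Proposition~\ref{prop: resolution ideal}; the subtlety is that the sub-line-bundle $\kl(C_i)\subset\ke$ corresponds to a \emph{quotient} of $\ke^\vee$, hence to a section of $\IP(\ke)$, and under $\Phi_3$ the evaluation of $\Phi_3(\eta)$ along that section is the composite $H^0(\Sym^3\ke\tensor\det\ke^\vee)\to H^0(\Sym^3(\kl(C_i))\tensor\det\ke^\vee) = H^0(\kl(3C_i)\tensor\det\ke^\vee) = H^0(\kl(2C_i - C_j - C_k))$, which is the \emph{wrong} (most negative) piece. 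So in fact I should use instead the \emph{other} section — the one given by the quotient $\ke\twoheadrightarrow\kl(C_j+C_k)\ki_Z$ is not a section since the target is not locally free, but its saturation $\ke\twoheadrightarrow\kl(C_j+C_k)$ off $Z$ gives a rational section, and the correct statement involves a section of $\IP(\ke)$ through which $\eta$ restricts to the $\ki_Z^3$-twisted piece. The honest fix: apply the argument to the composition $\ke\to\ke/\kl(C_i) = \kl(C_j+C_k)\ki_Z$ and observe that $\Phi_3(\eta)$ restricted to the divisor in $\IP$ where this composition vanishes (a section over $S\setminus Z$, a blow-up-type locus over $Z$) lands, after the identification, in $H^0(S,\kl(2(C_j+C_k)-C_i)\ki_Z^3)$; irreducibility of $T$ forbids $\eta$ from being divisible by the corresponding linear form, so this restriction is nonzero. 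Getting this identification precisely right — reconciling the section-of-$\IP(\ke)$ picture with the ideal-sheaf twist coming from $Z$ — is where the real work lies; everything else is the filtration computation already set up in the paper.
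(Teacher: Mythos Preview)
Your core idea is the paper's own: use the surjection coming from the cube of the Serre sequence \eqref{eq: cube of Serre sequence},
\[
\Sym^3\ke\tensor\det\ke^\vee \;\onto\; \kl(2(C_j+C_k)-C_i)\tensor\ki_{C_j\cap C_k}^3,
\]
and argue that $\eta$ cannot lie in the kernel if $T$ is irreducible. So the approach is correct and essentially the same as the paper's.

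Where you diverge is in execution, and this is worth noting because the paper's execution dissolves exactly the difficulty you flag as ``the main obstacle''. Rather than interpreting the kernel globally via a (rational) section of $\IP(\ke)$ and then wrestling with what happens over $Z = C_j\cap C_k$, the paper simply restricts to the open set $U = S\setminus\bigcup_l C_l$. There the Serre sequence splits, so $\ke|_U \isom \kl(C_i)|_U\oplus\kl(C_j+C_k)|_U$, the projective bundle is trivial with fibre coordinates $u,v$, and $\eta|_U = \sum_{m}\eta_m u^m v^{3-m}$. The restriction map above is then just ``take the $\eta_0$ component'', and $\eta_0 = 0$ means $u\mid\eta|_U$, so $T|_U$ is reducible; since $U$ is dense, $T$ is reducible. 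No sections of $\IP(\ke)$, no saturation, no ideal-sheaf bookkeeping over $Z$ --- the entire issue you anticipated simply does not arise, because the open set $U$ is already disjoint from $Z$.

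In short: you have the right map and the right implication, but you are making the geometric interpretation harder than necessary. Restricting to a dense open set where the bundle splits is enough, since irreducibility is a generic condition.
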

\begin{proof}
The irreducibility of $T$ is governed by the irreducibility of the cubic equation $\eta$ at the generic point, which is not easy to control in general. 

 However, if we restrict the sequence \eqref{eq: Serre seq for E}   to  the open subset $U = S\setminus \cup_{l=1}^3C_l$, then, since $\kl|_{U} \to \kl(C_j)|_{U}$ is an isomorphism, it is split exact and gives an isomorphism $\ke|_U \isom \kl(C_i)|_U\oplus\kl(C_j+C_k)|_U$.
 
Also, as the projective bundle $\IP|_U \to U$ is trivial, we can write the section 
 \[ \eta|_U = \sum_{i = 0}^3 \eta_i u^iv^{n-i}.\]
With this, the natural restriction map 
\[H^0(S, \Sym^3(\ke) \tensor \det \ke^\vee) \to H^0(S, \kl(2(C_j+C_k)-C_i)\tensor \ki^3_{C_j\cap C_k})\]
induced by the third power of \eqref{eq: Serre seq for E} corresponds to the restriction to the $\eta_0$ components. If this component vanishes, then over $U$ the equation is divisible by $u$ and $T$ is reducible for sure. 
\end{proof}
In case of special building data, we can be much more precise.

\begin{thm}[gci-admissibility for commensurable data]
\label{thm: gci existence}
Let $A$ be a divisor on a smooth surface $S$ such that $|A|$ has no base points. 
Fix integers  $d$ and $c_1\geq c_2 \geq c_3 \geq 1$ and let $C_i\in |c_i A|$ be such that the triple intersection is empty.  
Let $\ke$ be nearly split associated to the data $(\ko_S(d A), C_1, C_2, C_3)$. 
\begin{enumerate}
\item If $d+c_3\geq c_1+c_2-c_3$, then $\ke$ is trivially admissible. 
\item If $c_1+c_2-c_3>d+c_3\geq c_1$, then $\ke$ is  gci-admissible but not trivially admissible.
\item If $c_1> d+c_3 \geq c_2$ and $d -c_1 +2c_2 - c_3 \geq 0$, 
 then $\ke$ is  gci-admissible if and only if $C_3$ is smooth at every point of $C_2 \cap C_3$, 
\item In all other cases, the bundle  $\ke$ is not gci-admissible. 
\end{enumerate}
\end{thm}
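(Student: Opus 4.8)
The plan is to reduce everything to the case $S = \IP^2$ treated in Theorem \ref{thm: admissible triple planes}, or rather to mimic its proof in the general commensurable setting. The key observation is that when $C_i \in |c_i A|$ with $|A|$ base-point free, the bundle $\tilde\ke = \bigoplus_i \ko_S(d A + c_i A) = \bigoplus_i \ko_S((d+c_i)A)$ has all its symmetric powers split into line bundles of the form $\ko_S(mA)$, and global generation of $\ko_S(mA)$ follows from base-point freeness of $|A|$ whenever $m \geq 0$. So the first step is to compute, for each $(a_1,a_2,a_3)$ with $a_1+a_2+a_3 = 3$, the twist $\kl(\sum_i(a_i-1)C_i) = \ko_S\big((d + \sum_i(a_i-1)c_i)A\big)$ and find the minimal value of the exponent $d + \sum_i(a_i-1)c_i$ over all such triples. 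Since $c_1 \geq c_2 \geq c_3$, the minimum is attained at $(a_1,a_2,a_3) = (0,0,3)$, giving exponent $d - c_1 - c_2 + 2c_3$, and the next smallest at $(0,1,2)$ or $(0,0,3)$-adjacent configurations; so $\ke$ is trivially admissible (by Lemma \ref{lem: consequences of splitting}(i)) exactly when $d - c_1 - c_2 + 2c_3 \geq 0$, i.e. $d + c_3 \geq c_1 + c_2 - c_3$. This proves (1). For the claim that (1) is sharp — trivial admissibility fails in (2),(3),(4) — I would again use Lemma \ref{lem: consequences of splitting}(i): if $d + c_3 < c_1 + c_2 - c_3$ then the summand $\ko_S((d-c_1-c_2+2c_3)A)$ has negative exponent, and since it has a section forcing $H^0 = 0$ would require more care, but on $\IP^2$ a negative line bundle is not globally generated, and in general a line bundle $\ko_S(mA)$ with $m < 0$ cannot be globally generated unless it has a section, which it does not when $|A| \neq \emptyset$; this handles the "not trivially admissible" half of (2) and (3).

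The heart of the argument is gci-admissibility in cases (2) and (3), and non-gci-admissibility in (4). Here I would follow the strategy of Theorem \ref{thm: admissible triple planes}: a gci cover is cut out on $\tilde\IP$ by a section $\tilde\eta \in H^0(\tilde\IP, \tilde\pi^*\det\ke^\vee(3))$, equivalently by $\eta \in H^0(S, \Sym^3\ke \otimes \det\ke^\vee)$ lifting from the split bundle. The relevant cohomology is organized by the exact sequences \eqref{eq: symmetric power of nearly split sequence}, \eqref{eq: cube of Serre sequence} and Proposition \ref{prop: resolution ideal}, which let one write down all sections explicitly in terms of the fibre coordinates $u,v,w$ with coefficients that are sections of various $\ko_S(mA)$ and of twists of the ideal sheaf $\ki_{C_2 \cap C_3}$. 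In case (2), when $d + c_3 \geq c_1$, one checks that the "bottom" coefficient bundles that are problematic in case (1) are still globally generated on the relevant curves after the Koszul/ideal-sheaf twist kicks in — precisely, the failure of $\ko_S((d-c_1-c_2+2c_3)A)$ to be globally generated is compensated because the corresponding monomial $w^3$ is forced to vanish on $C_2 \cap C_3$ with the right multiplicity via \eqref{eq: cube of Serre sequence}, and one produces a section with the right codimension by a Bertini argument on $\tilde\IP$ (as in Proposition \ref{prop: implications admissibility}) provided the restricted linear system separates points. In case (3), the extra hypothesis $d - c_1 + 2c_2 - c_3 \geq 0$ plays the same compensating role for a different monomial, and the smoothness of $C_3$ along $C_2 \cap C_3$ is exactly what is needed so that the ideal sheaf $\ki_{C_2 \cap C_3}$ behaves like a smooth complete intersection locally — without it, the base locus of the relevant system on the fibre over a singular point of $C_3 \cap C_2$ is too large and no section can have the right codimension there.

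For the negative direction in (4), the plan is to identify, in each remaining numerical range, a point $x \in S$ (typically on $C_2 \cap C_3$ or where some $C_i$ is too positive relative to $d$) at which \emph{every} section of $\Sym^3\tilde\ke \otimes \det\ke^\vee$ — hence every potential $\tilde\eta$ — vanishes identically on the fibre $\tilde\pi^{-1}(x)\cap\IP$, so that no gci cover can have the right codimension there; this is the contrapositive packaging of the global-generation analysis of Lemma \ref{lem: nec for easy existence}. Concretely, one restricts the building sequence to the curve $C_k$ (for the appropriate $k$), computes the restricted bundle $(\det\ke^\vee \otimes \Sym^3(\kl(C_i+C_j)\oplus\kl(C_k)))|_{C_k}$, and observes that when the numerical conditions of (2) and (3) both fail, this restricted bundle has a summand of negative degree on $C_k$, hence is not globally generated, and the obstruction is unavoidable. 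I expect the main obstacle to be the bookkeeping in case (3): disentangling precisely which monomials in $u,v,w$ contribute, correctly threading the three ideal-sheaf powers $\ki_{C_2\cap C_3}^k$ through \eqref{eq: square of Serre sequence}–\eqref{eq: cube of Serre sequence}, and verifying — using Proposition \ref{prop: resolution ideal} and the smoothness hypothesis on $C_3$ — that the resulting system of sections actually has the right codimension at the finitely many points of $C_2 \cap C_3$, rather than merely generically. The other cases are comparatively mechanical once the $\IP^2$ proof is in hand, but this local analysis at the complete-intersection points is where the real content, and the necessity of the smoothness hypothesis, lives.
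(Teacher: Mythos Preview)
Your high-level outline for part~(i) is correct and matches the paper: the minimal exponent among the summands of $\Sym^3\tilde\ke\otimes\det\ke^\vee$ is $d-c_1-c_2+2c_3$, and trivial admissibility holds exactly when this is non-negative.

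However, your plan for (ii)--(iv) has genuine gaps. The central confusion is between $\Sym^3\ke$ and $\Sym^3\tilde\ke$. For \emph{gci}-admissibility one lifts $\eta$ to $\tilde\eta\in H^0(\tilde\IP,\tilde\pi^*\det\ke^\vee(3))$, and $\Sym^3\tilde\ke\otimes\det\ke^\vee$ is a direct sum of line bundles~--- the Serre-type sequences \eqref{eq: square of Serre sequence}, \eqref{eq: cube of Serre sequence} and the ideal powers $\ki_{C_2\cap C_3}^k$ play no role here. The paper's method is instead to write $\tilde\eta=\sum_{a+b+c=3}f_{abc}u^av^bw^c$, identify which monomials vanish for the given numerics, and then \emph{restrict to $\IP$ by using the defining equation $\gamma_1u+\gamma_2v+\gamma_3w=0$ to eliminate one fibre variable}. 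This elimination is the step you are missing: it converts the question into an explicit local computation of the lowest-order term of $\eta$ at points of the base locus $B=\{u=v=\gamma_3=0\}\subset\IP$, which lies over $C_3$. Bertini handles everything off $B$; on $B$ one checks smoothness by hand.

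Two specific corrections follow from this. First, the smoothness hypothesis on $C_3$ in (iii) has nothing to do with $\ki_{C_2\cap C_3}$ ``behaving like a smooth complete intersection'' (it always is one, since $C_2,C_3$ share no component). Rather, over a point $x\in C_2\cap C_3$ one eliminates $u$ via $u=-(\gamma_2 v+\gamma_3 w)/\gamma_1$ and finds that the only candidate first-order term of $\eta$ at the corresponding point of $B$ is $f_{102}\cdot(-\gamma_3/\gamma_1)$; this has order~$1$ precisely when $\gamma_3$ does, i.e.\ when $C_3$ is smooth at $x$. Second, your strategy for (iv) via Lemma~\ref{lem: nec for easy existence} proves non-\emph{gg}-admissibility, not non-\emph{gci}-admissibility, and the ``vanishes identically on a fibre'' mechanism does not cover the case $d+c_3<c_2$: there $\tilde\eta$ has only $w$-degree $\leq 1$, and the restriction to $\IP$ generically has the right codimension but is forced to vanish to order $\geq 2$ along the curve $\{u=\gamma_3=0\}$, so every gci cover is \emph{singular}, not ill-defined. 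The other subcase of (iv), $d-c_1+2c_2-c_3<0$, is handled by observing that $\tilde\eta$ is globally divisible by $u$, hence reducible.
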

\begin{proof}
Using the notation from Remark \ref{rem: IP and tilde IP}, the splitting of $\Sym^3\tilde \ke$, and the formula for $\det \ke$, every section $\tilde\eta\in H^0(\tilde \IP,   \tilde \pi^* \det \ke ^\vee (3))\isom  H^0(S, \Sym^3\tilde \ke \tensor \det \ke ^\vee)$ can be written as 
\begin{equation}\label{eq: tildeT}
\tilde \eta = \sum_{a+b +c= 3} f_{abc} u^av^bw^c
\end{equation}
with $f_{abc}\in H^0\left(S, \ko_S\left((d+ (a-1)c_1 + (b-1) c_2 + (c-1) c_3)A\right)\right)$; we denote its restriction to $\IP$ with $\eta = \tilde \eta|_\IP$.  

We will use repeatedly that for any integer $m$, the  $\ko_S(mA)$ is  globally generated if $m\geq 0$ and has no sections if $m<0$. 
The lowest multiple of $A$ occurs for the coefficient of $w^3$, namely
 \[f_{003} \in H^0\left(S, \ko_S\left((d-c_1 - c_2+ 2c_3)A\right)\right),\] 
 so if $d+c_3\geq c_1+c_2-c_3$, then $\ke$ is trivially admissible, which proves $(i)$.

For the other items, consider the inequalities
\[ \underbrace{d -c_1 +2c_2 - c_3}_{\geq 0 \text{ in  $(iii)$}} \geq d -c_1 +c_2 \geq\underbrace{ d -c_1 + c_3}_{\geq 0 \text{ in  $(ii)$}} \geq \underbrace{d -c_1 -c_2 + 2c_3}_{\geq 0 \text{ in  $(i)$}}, \]
involving the multiples of $A$ corresponding to the coefficients of $v^3$, $v^2w$, $vw^2$ and $w^3$, as well as the inequalities  
\[\underbrace{d -  c_2 + c_3}_{\geq 0 \text{ in  $(iii)$}} \geq\underbrace{ d -c_1 + c_3}_{\geq 0 \text{ in  $(ii)$}}\]
involving the multiples of $A$ corresponding to the coefficients of $uw^2$ and $vw^2$.

We see that the bundles containing the coefficients $f_{abc}$ in $(iii)$ are less positive than the ones  in $(ii)$.

We first show $(iv)$ by proving that $\ke$ is not gci-admissible if either one of the conditions in $(iii)$ is not met. 

If $ d -c_1 +2c_2 - c_3<0$ then, by the above, the equation $\tilde \eta$ is globally divisible by $u$ and there cannot exist an irreducible gci-cover.

Now assume $d+c_3-c_2<0$. Then we have also 
\[ d-c_1-c_2+2c_3\leq d-c_1+c_3\leq d+c_3-c_2<0\]
so that only monomials of degree $0$ or $1$ in $w$ appear in \eqref{eq: tildeT}.
Assume that a gci triple cover $T$ is defined by restriction of an equation $\tilde \eta$ to $\IP$. 

Let us restrict to $\tilde U:=\{w\neq 0\}|_{S\setminus C_2}\subset\tilde\IP$ and replace $u$ by $\frac uw$ and $v$ by $\frac vw$,  so that \eqref{eq: eq for IP} becomes over $\tilde U$:
\[v=\alpha u+\beta, \ \alpha=-\frac{\gamma_1}{\gamma_2}, \ \beta=-\frac{\gamma_3}{\gamma_2}.\]

Then on $U:=\tilde U\cap \IP$, $\eta$ becomes:
\[ \eta=\sum_{a+b=3}f_{ab0}u^a(\alpha u+\beta)^b+\sum_{a+b=2}f_{ab1}u^a(\alpha u+\beta)^b\]
an equation which vanishes at the curve defined by $u = \beta = 0$, lying over $C_3$,  with multiplicity at least two. Thus $T$ cannot by smooth, indeed not even normal, and $\ke$ is not gci-admissible. 
This concludes the proof of $(iv)$. 

To address $(ii) $ and $(iii)$ we now assume 
that $d -c_1 +2c_2 - c_3 \geq 0 $, that is, we can have $v^3$ with non-zero coefficient in $\tilde \eta$.  

We may also assume that we are not in case $(i)$, that is, $c_1+c_2  -c_3 > d+c_3 $ and $w^3$ does not appear in $\tilde \eta$.  We claim that the base locus of the linear system on $\IP$ is then the curve 
\[ B:=\Bs\left(| \ko_\IP(3)\otimes\pi^*\det \ke^\vee|\right) = \{ u=v = \gamma_3 = 0\},\]
lying over $C_3$. 
Indeed, both equation vanish along $B$, because $\tilde \eta$ does not contain the monomial $w^3$ with a non-zero coefficient. At a point outside $B$, one monomial of the form $f_{300}u^3$ or $f_{030}v^3$ does not vanish, since both $f_{030}$ and $f_{030}$ can be chosen in base-point free linear systems. 

By Bertini \cite{Bertini} and compactness of $B$, it follows that $\ke$ is gci-admissible if and only if for any $z\in B$, there is a non-zero section $\tilde \eta$ cutting out a surface on $\IP$ which is smooth at $z$. 

For the proof of $(iii)$, suppose that $d-c_1+2c_2-c_3\geq 0$ and $c_1> d+c_3 \geq c_2$, so 
\[d-c_1-c_2+2c_3\leq d-c_1+c_3<0 \] 
and the monomials $vw^2$ and $w^3$ do not appear in \eqref{eq: tildeT}.

Since $w\neq 0$ on $B$, we can again restrict to $w\neq 0$ and divide by $w$. Let us first take $z\in B\cap\tilde U$ over $x\in C_3\setminus C_2$.  
 We do the same as in $(iv)$ and see that $\eta$ becomes over $U$:
\[ \eta=\sum_{a+b=3}f_{ab0}u^a(\alpha u+\beta)^b+\sum_{a+b=2}f_{ab1}u^a(\alpha u+\beta)^b+f_{102}u.\]
Its first order term is thus $f_{102}u$, and since $f_{102}$ is a section of a globally generated line bundle by assumption, $\eta$ can be taken smooth at such points $z$.

Let us now take $z\in B$ over $x\in C_3\cap C_2$ and restrict to the open set $\tilde V:=\{w\neq 0\}|_{S\setminus C_1}$, where \eqref{eq: eq for IP} becomes
\[ u=\mu v+\nu, \ \mu=-\frac{\gamma_2}{\gamma_1}, \ \nu=-\frac{\gamma_3}{\gamma_1}.\]
Then on $V:=\tilde V\cap \IP$, the lowest order term of $\eta$ at $z$ is the lowest order term of 
\[  f_{102}(\mu v+\nu). \]
Since $\mu v$ is at least of order two and $f_{102}$ lives in a globally generated line bundle, we find that  $\eta$ can be chosen smooth at $z$ precisely when $\nu$ is of order $1$ at $x$. This shows the assertion.  

For $(ii)$ we have $c_1+c_2-c_3>d+c_3\geq c_1$ and only the monomial $w^3$ does not appear in the expression of $\tilde\eta$. As before, in order to ensure that $\ke$ is gci-admissible, it is enough to show that at any $z\in B$ one can find a section $\eta$ smooth at $z$.

By taking first $z\in B$ over $x\in C_3\setminus C_2$, we see just like before that the lowest order term of $\eta$ at $z$ is given by the first order term of
\[ f_{102}u+f_{012}(\alpha u+\beta).\]
Since $f_{102}+\alpha f_{012}$ can be chosen not to vanish at $x$, $\eta$ can be chosen smooth at $z$. Similarly, at $z\in B$ over $x\in C_3\cap C_2$, the first order term of $\eta$ contains the monomial $(f_{102}\mu+f_{012})v$ which can be made non-vanishing at $z$. Hence $\ke$ is indeed gci-admissible. 
\end{proof}

\begin{exam} \label{exam: covers of K3}
 We want to point out one exceptionally simple special case of Theorem \ref{thm: gci existence}, namely when $c_1= c_2 = c_3 = c\geq 1$ and $d\geq 0$.

Let $S$ be a smooth surface and $A$ a globally generated line bundle. Let $H$ be a line in $\IP^2$.  Writing  $\tilde \IP \isom S \times \IP^2$ and   denoting by $p$ the projection to the second factor, the $\IP^1$-bundle $\IP$ is defined by a general element of $ |c\pi^*A + p^*H|$. 
 
 Thus a gci triple cover $f\colon T\to S$ with this building data is a complete intersection of
 \[ \IP\in  |c\tilde \pi^*A + p^*H| \text{ and } \tilde T \in |d\tilde \pi^*A + 3p^*H|,\]
 which is obviously admissible by Bertini. Its canonical bundle is 
 \[\omega_T = f^*\omega_S((c+d)A)\tensor p^*\ko_{\IP^2}(1).\]

Assume that $S$ is a K3 surface, a case recently considered by Garbagnati and Penegini in \cite{GP21}.  Since $A$ is assumed to be globally generated, we have $A^2\geq 0$, thus  
 \[ K_T^2 = (5d^2+ 15cd + 9c^2) A^2\geq 0 
\, .
 \]
 We see that if $A^2 >0$,  then $K_T^2>0$ and $p_g(T)\geq p_g(S)>0$, so $T$ is of general type. If $A$ is even ample, then the canonical $\omega_T$ is clearly ample too, and $T$ is in fact minimal of general type.
 
 If $A^2 = 0$ but $A$ is not trivial, then $|A|$ induces a fibration over a curve $\phi_{|A|}\colon S \to B$ and it is easy to check that the triple cover $T \to S$ is then the pullback of a triple cover $B' \to B$. 
 
Therefore, via this construction, we obtain either surfaces of general type, by choosing $A$ ample, or properly elliptic surfaces if $A$ induces an elliptic fibration. 
 \end{exam}

\section{Asymptotic considerations}\label{sect: aymptotics}
Let us fix an ample divisor $A$ on the smooth surface $S$. Assume that $\ke$ is an nearly split bundle on $S$ associated to $(\kl, C_1, C_2, C_3)$. 
It is plausible that $\ke$ will become admissible if we make the data suffciently positive. 

Below, we give some indication how this can be done but also point out that one has to keep a certain balance for admissibility. 

\begin{prop}
Consider the bundle $\ke'_m = \ke(mA)$, nearly split with associated data $(\kl(mA), C_1, C_2, C_3)$.
 Then for $m\gg 0$ the bundle $\ke'_m$ is gg-admissible. If $T_m$ is a general triple cover with Tschirnhausen bundle $\ke'_m$, then for $m\gg 0$ it is minimal of general type and asymptotically the slope becomes
\[ \lim_{m\to \infty } \frac{K_{T_m}^2}{\chi(T_m)} = 5.\]
 \end{prop}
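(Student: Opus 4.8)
The plan is to verify the three claims in turn: gg-admissibility for $m \gg 0$, minimality of general type for $m \gg 0$, and the asymptotic slope. The first two are essentially stabilisation statements about positivity, while the third is a direct computation with the formulas of Corollary~\ref{cor: invariants}.

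For gg-admissibility, recall from Theorem~\ref{thm: CE} that it suffices to show $\Sym^3 \ke'_m \tensor \det (\ke'_m)^\vee$ is globally generated, together with $h^0(S, (\ke'_m)^\vee) = 0$ for $m$ large; the latter holds since $(\ke'_m)^\vee = \ke^\vee(-mA)$ has no sections once $m$ is large because $A$ is ample. For the global generation I would use Lemma~\ref{lem: consequences of splitting}: by surjectivity $\Sym^3 \tilde\ke'_m \tensor \det(\ke'_m)^\vee \onto \Sym^3 \ke'_m \tensor \det(\ke'_m)^\vee$, so it is enough that each summand $\kl((m + \sum_i (a_i-1)) \text{-type twist})$ — explicitly $\kl(mA)(\sum_i (a_i - 1) C_i)$ for $a_1 + a_2 + a_3 = 3$ — is globally generated. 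Since these finitely many line bundles are all of the form (fixed line bundle) $\tensor \ko_S(mA)$ with $A$ ample, they become globally generated for $m \gg 0$ by Serre vanishing and the fact that an ample line bundle becomes very ample, hence globally generated, after a bounded twist. This gives $\ke'_m$ trivially admissible, a fortiori gg-admissible, for $m \gg 0$.

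For minimality of general type, I would apply Proposition~\ref{prop: general criterion min gen type} with the line bundle $\kl := \ko_S(A)$, which is ample and (after possibly replacing $A$ by a multiple, which only shifts the range of $m$) globally generated. One needs $\ke'_m \tensor \omega_S \tensor \ko_S(-A) = \ke(mA) \tensor \omega_S(-A)$ to be globally generated. Using the nearly split resolution \eqref{eq: split resolution of E} of $\ke$, this bundle is a quotient of $\bigoplus_i \kl(mA + C_i) \tensor \omega_S(-A)$, and each of these is again a fixed line bundle twisted by $\ko_S(mA)$, hence globally generated for $m \gg 0$; global generation passes to the quotient. Thus Proposition~\ref{prop: general criterion min gen type} applies and $T_m$ is minimal of general type for $m \gg 0$.

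Finally, for the asymptotic slope I would substitute into the formulas of Corollary~\ref{cor: invariants}, replacing $\kl$ by $\kl(mA)$ while keeping $C_1, C_2, C_3$ fixed. In $K_{T_m}^2$ the dominant term comes from $5\kl_m^2 = 5(c_1(\kl) + mA)^2$, whose leading coefficient in $m$ is $5 m^2 A^2$; all other terms ($K_S$-terms, $\kl_m \cdot C_i$, $C_i$-terms) are $O(m)$ or $O(1)$. In $\chi(T_m)$ the dominant term is $\kl_m^2 = (c_1(\kl) + mA)^2$, with leading coefficient $m^2 A^2$, and again everything else is lower order. Since $A$ is ample, $A^2 > 0$, so the ratio tends to $5 m^2 A^2 / m^2 A^2 = 5$. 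The main (and only mild) obstacle is bookkeeping: making sure the $O(m)$ and $O(1)$ corrections in numerator and denominator genuinely do not affect the limit, which is immediate once one writes $K_{T_m}^2 = 5 m^2 A^2 + O(m)$ and $\chi(T_m) = m^2 A^2 + O(m)$ and divides.
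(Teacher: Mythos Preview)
Your argument is correct and follows the same three-step outline as the paper, but you invoke different auxiliary results in two places, and there is one small imprecision worth flagging.

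For gg-admissibility, the paper observes directly that
\[
\Sym^3\ke'_m\tensor \det(\ke'_m)^\vee \;=\; \Sym^3\ke\tensor \det\ke^\vee \tensor \ko_S(mA),
\]
since twisting a rank two bundle by $mA$ contributes $3mA$ to $\Sym^3$ and $-2mA$ to $\det^\vee$; this is a fixed vector bundle twisted by $mA$ and hence globally generated for $m\gg0$. Your route via trivial admissibility and the individual line-bundle summands $\kl(mA)\bigl(\sum_i(a_i-1)C_i\bigr)$ also works, but is slightly more roundabout.

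For minimality, the paper instead applies Corollary~\ref{cor: minimal general type}, checking that $\Sym^2\ke'_m\tensor\omega_S^2 = \Sym^2\ke\tensor\omega_S^2(2mA)$ acquires a section and vanishing $h^1$ for $m\gg0$ by Serre vanishing, together with $K_{T_m}^2>0$ from the asymptotic formula. Your use of Proposition~\ref{prop: general criterion min gen type} is a valid alternative and arguably more direct, since it yields $\omega_{T_m}$ ample outright. However, your handling of the hypothesis that the auxiliary line bundle be globally generated is imprecise: replacing $A$ by a multiple would change the sequence $\ke'_m$, not merely ``shift the range of $m$''. The clean fix is to keep $A$ fixed and take the auxiliary line bundle to be $\ko_S(kA)$ for some fixed $k$ with $kA$ globally generated; then one needs $\ke'_m\tensor\omega_S(-kA)=\ke\tensor\omega_S\bigl((m-k)A\bigr)$ globally generated for $m\gg0$, which follows exactly as you argue from the nearly split resolution.

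The asymptotic computation is identical to the paper's.
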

\begin{proof}
First note that for $m\gg 0$ the bundle
\[ \Sym^3\ke'_m\tensor \det {\ke'}_m^\vee =\Sym^3\ke\tensor \det \ke^\vee (mA)\]
is globally generated, hence $\ke'_m$ is gg-admissible.

Substituting into the formulas of Corollary \ref{cor: invariants} we see that 
 \begin{align*}
 K_{T_m}^2 & = 5m^2A^2 + \text{lower order terms}\\
\chi(T_m) & = m^2A^2 + \text{lower order terms}.
\end{align*}
So if we choose $m$ large enough such that in addition  $K_{T_m}^2 >0$ and $\Sym^2\ke'_m \tensor \omega_S^2 = \Sym^2\ke \tensor \omega_S^2(2mA)$ has no higer cohomology and at least one section, possibly by Serre vanishing, then we conclude from Corollary \ref{cor: minimal general type}.  
\end{proof}

\begin{prop}
Assume that $\ke$ is trivially admissible, that is, $\Sym ^3\tilde \ke\tensor \det \ke^\vee$ is globally generated, and  consider the bundle $\ke'_m$, nearly split with associated data $(\kl, C_1', C_2', C_3')$, where $C_i'\in |C_i +mA|$ is sufficiently general.
Then for any $m\geq 0$
  the bundle $\ke'_m$ is gg-admissible. If $T_m$ is a general triple cover with Tschirnhausen bundle $\ke'_m$, then for $m\gg 0$ it is minimal of general type and asymptotically the slope becomes
\[ \lim_{m\to \infty } \frac{K_{T_m}^2}{\chi(T_m)} = 6.\]
 \end{prop}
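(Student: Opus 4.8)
The overall strategy mirrors the previous proposition, with the key differences stemming from the fact that here we increase the curves $C_i$ rather than the line bundle $\kl$, and we start from a trivially admissible bundle. First I would observe that $\tilde\ke'_m = \bigoplus_i \kl(C_i + mA)$, so $\Sym^3\tilde\ke'_m\tensor\det{\ke'_m}^\vee$ is a direct sum of line bundles of the shape $\kl\bigl(\sum_i(a_i-1)C_i\bigr)\tensor\ko_S\bigl((a_1+a_2+a_3-3)mA\bigr)$ with $a_1+a_2+a_3=3$, hence of the shape (summand of $\Sym^3\tilde\ke\tensor\det\ke^\vee$)$\tensor\ko_S(0\cdot A)$ — the exponent of $mA$ is always $0$. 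So in fact $\Sym^3\tilde\ke'_m\tensor\det{\ke'_m}^\vee \isom \Sym^3\tilde\ke\tensor\det\ke^\vee$, which is globally generated by hypothesis. Thus $\ke'_m$ is trivially admissible, in particular gg-admissible, for every $m\geq 0$, by Proposition \ref{prop: implications admissibility}; this gives the first assertion without any largeness of $m$. (One does need that a sufficiently general $C_i'\in|C_i+mA|$ keeps the triple intersection empty and keeps the curves without common components, which holds because $|A|$, hence $|C_i+mA|$, is sufficiently mobile — this uses Bertini and is where ``sufficiently general'' enters.)

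Next, for the general triple cover $T_m$, I would substitute the building data $(\kl, C_1', C_2', C_3')$ with $C_i'\equiv C_i + mA$ into the formulas of Corollary \ref{cor: invariants}. Writing $\ell = c_1(\kl)$, $\gamma_i = [C_i]$, and collecting powers of $m$, the dominant terms are
\[
 K_{T_m}^2 = \bigl(2\tsum_i\gamma_i^2 + \tsum_{i<j}\gamma_i\gamma_j\bigr)\Big|_{\gamma_i\mapsto mA} + \text{l.o.t.} = 6m^2 A^2 + \text{l.o.t.},
\]
\[
 \chi(T_m) = \tfrac12\tsum_i C_i^2\Big|_{C_i\mapsto mA} + \text{l.o.t.} = \tfrac32 m^2 A^2 + \text{l.o.t.},
\]
since there are three indices $i$ contributing $2$ and three pairs $i<j$ contributing $1$, giving $2\cdot 3 + 3 = 9$; wait — here one must be careful: $2\sum_i C_i^2 + \sum_{i<j}C_iC_j$ evaluated with each $C_i \equiv mA$ gives $(2\cdot 3 + 3)m^2A^2 = 9m^2A^2$, but the other quadratic contributions to $K_{T_m}^2$ coming from $5\kl.\sum_i C_i$ etc. are only linear in $m$, so the $m^2$-coefficient of $K_{T_m}^2$ is $9A^2$ — hold on, I should instead read off the leading term from the Chern-class expression $3K_S^2 + 4K_S.c_1(\ke'_m) + 2c_1^2(\ke'_m) - 3c_2(\ke'_m)$. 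One has $c_1(\ke'_m) = 2\ell + \sum_i(\gamma_i + mA) = 3mA + O(1)$ and $c_2(\ke'_m) = 3m^2A^2 + O(m)$ from Proposition \ref{prop: Cherncl}, so $2c_1^2 - 3c_2 = 2\cdot 9m^2A^2 - 3\cdot 3m^2A^2 + O(m) = 9m^2A^2 + O(m)$, and likewise $\chi(T_m) = 3\chi(S) + \tfrac12(c_1^2(\ke'_m) - 2c_2(\ke'_m)) + \tfrac12 K_S.c_1(\ke'_m) + \dots$, giving leading term $\tfrac12(9 - 6)m^2A^2 = \tfrac32 m^2A^2$. Hence the ratio tends to $9/(3/2) = 6$.

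Finally, to promote this to ``minimal of general type for $m \gg 0$'', I would invoke Corollary \ref{cor: minimal general type}: it suffices to check, for $m\gg 0$, that $K_{T_m}^2 > 0$ (clear from the leading term $9m^2A^2 > 0$ once $A$ is ample, so $A^2>0$), that $P_2(T_m) = h^0(S, \Sym^2\ke'_m\tensor\omega_S^2) > 0$, and that $h^1(S, \Sym^2\ke'_m\tensor\omega_S^2) = 0$. For the last two, I would use the sequence \eqref{eq: symmetric power of nearly split sequence} with $k=2$ to reduce to cohomology of line bundle summands of $\Sym^2\tilde\ke'_m\tensor\omega_S^2$ and of $\kl\tensor\tilde\ke'_m\tensor\omega_S^2$; each such summand is of the form (fixed line bundle)$\tensor\ko_S(jmA)$ with $j\in\{1,2\}$ for the $\Sym^2\tilde\ke'_m$ part and $j\in\{0,1\}$ for the other part — the $j=0$ terms are harmless as they contribute to $H^1$ of $\kl\tensor\kl(C_i+mA)\tensor\omega_S^2$, which still has a positive twist by $mA$. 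By Serre vanishing and global generation of $A$, for $m\gg 0$ all these line bundles have vanishing $H^1$ and $H^2$ and nonzero $H^0$, which forces $h^1(\Sym^2\ke'_m\tensor\omega_S^2) = 0$ and $h^0 > 0$ by the long exact sequence. The main obstacle is bookkeeping: making sure the ``sufficiently general'' choice of the $C_i'$ genuinely preserves conditions $(i)$ and $(ii)$ of Proposition \ref{prop: nearly split E} for all $m$ simultaneously, and tracking which line-bundle summands could in principle obstruct the $H^1$-vanishing before Serre vanishing kicks in; neither is deep, but both require care.
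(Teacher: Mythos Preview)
Your approach is correct and essentially identical to the paper's: you observe that $\Sym^3\tilde\ke'_m\tensor\det{\ke'_m}^\vee \isom \Sym^3\tilde\ke\tensor\det\ke^\vee$ (since the $mA$-exponent in each summand is $\sum_i(a_i-1)=0$), compute the leading terms $K_{T_m}^2 = 9m^2A^2 + O(m)$ and $\chi(T_m) = \tfrac32 m^2A^2 + O(m)$ from Corollary~\ref{cor: invariants}, and then invoke Corollary~\ref{cor: minimal general type} together with sequence~\eqref{eq: symmetric power of nearly split sequence} and Serre vanishing to get minimality of general type for $m\gg 0$. The only cosmetic issue is your bookkeeping of the $mA$-twists in the summands: every summand of $\Sym^2\tilde\ke'_m$ carries exactly a $2mA$-twist and every summand of $\kl\tensor\tilde\ke'_m$ carries exactly an $mA$-twist, so there is no $j=0$ case to worry about at all --- but as you note yourself, the conclusion is unaffected.
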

\begin{proof}
Note that $\Sym ^3\tilde \ke\tensor \det \ke^\vee = \Sym ^3\tilde \ke_m'\tensor \det {\ke_m'}^\vee$ for any $m$, so the bundle remains trivially admissible. 

We can use the exact sequence \eqref{eq: symmetric power of nearly split sequence} for $\ke'_m$ to show that for sufficiently large $m$ the bundle $\Sym^2\ke'_m \tensor \omega_S^2$ has no higher cohomology. 
Substituting into the formulas of Corollary \ref{cor: invariants} we see that 
 \begin{align*}
 K_{T_m}^2 & = 9m^2 A^2 + \text{lower order terms}\\
\chi(T_m) & = \frac{3}{2}m^2A^2 + \text{lower order terms}.
\end{align*}
So if we choose $m$ also large enough such that $K_{T_m}^2 >0$, then we conclude using Corollary \ref{cor: minimal general type}.  
\end{proof}

Choosing a more sophisticated, e.g. polynomial, scaling of the data by an ample divisor, one can find more asymptotics for the slope, but nothing too far away from $[5,6]$ -- this is not the region of surfaces of general type that attracts the most interest. The reason is that a certain balance has to be maintained in order to be admissible. We illustrate this with the following result.

\begin{prop}
  Consider the bundle $\ke'_m$, nearly split corresponding to $(\kl, C_1', C_2, C_3)$, where $C_1'\in |C_1 +mA|$ is sufficiently general. 
  Then for $m\gg 0 $ the bundle $\ke'_m$ is not admissible. 
\end{prop}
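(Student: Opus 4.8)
The plan is to show that making only $C_1$ more positive destroys admissibility, because the bundle $\ke'_m$ becomes so lopsided that every potential defining section $\eta$ is forced to be divisible by the fibre coordinate $u$ attached to $C_1'$, hence cannot cut out an irreducible cover. The natural tool is Proposition \ref{prop: necessary condition irreducibility}: if $\ke'_m$ were admissible, there would be an irreducible (indeed smooth) triple cover, and so in particular, taking $\{i,j,k\} = \{1,2,3\}$ with $i=1$, we would need
\[ H^0\bigl(S, \kl(2(C_2+C_3) - C_1')\tensor \ki^3_{C_2\cap C_3}\bigr)\neq 0. \]
So the whole proof reduces to checking that this group vanishes for $m\gg 0$.

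**Key steps.** First I would observe that $\ki^3_{C_2\cap C_3}\subset \ko_S$, so there is an inclusion
\[ \kl(2(C_2+C_3) - C_1')\tensor \ki^3_{C_2\cap C_3}\hookrightarrow \kl(2C_2 + 2C_3 - C_1 - mA), \]
and hence it suffices to show $H^0\bigl(S, \kl(2C_2 + 2C_3 - C_1 - mA)\bigr) = 0$ for $m$ large. Since $A$ is ample, the line bundle $\kl(2C_2 + 2C_3 - C_1) \tensor \ko_S(-mA)$ has strictly negative degree against $A$ once $m A.A$ exceeds the fixed intersection number $\bigl(\kl + 2C_2 + 2C_3 - C_1\bigr).A$; a line bundle of negative degree with respect to an ample class on a projective surface has no nonzero global sections (it would otherwise contain $\ko_S$, contradicting negativity after intersecting with $A$). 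This forces the $H^0$ above to vanish for all $m \gg 0$, and with it the group in Proposition \ref{prop: necessary condition irreducibility}. Therefore no irreducible triple cover with Tschirnhausen bundle $\ke'_m$ exists, and in particular $\ke'_m$ is not admissible. I would also note in passing that the connectedness hypothesis $H^0(S,(\ke'_m)^\vee)=0$ is unproblematic: by Lemma \ref{lem: condition connectedness} it is implied for $m\gg 0$ by the vanishing of $H^0(S, \kl^\vee(-C_2-C_3))$ and $H^1(S,\kl^\vee(-C_1'-C_2-C_3))$, the first being independent of $m$ and the second holding by Serre duality and Serre vanishing once $C_1'$ is sufficiently positive — but strictly speaking one does not even need this, since a disconnected cover is a fortiori not the kind of cover our definition of admissibility allows, so the conclusion "not admissible" holds regardless.

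**Main obstacle.** There is no serious analytic difficulty here; the content is entirely the bookkeeping that the coefficient $f_{abc}$ which is forced to vanish really is the obstruction to irreducibility, which is exactly what Proposition \ref{prop: necessary condition irreducibility} packages for us. The one point that requires a little care is to make sure the correct permutation is used: one wants the index $i$ (the curve whose coefficient sits in the \emph{most negative} bundle) to be $1$, since it is $C_1$ that has been enlarged; the asymmetry $c_1$ large, $c_2,c_3$ fixed is precisely what drives $2(C_2+C_3) - C_1'$ negative. Once that is set up, invoking ampleness of $A$ to kill the relevant $H^0$ is routine.
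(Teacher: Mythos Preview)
Your proposal is correct and follows exactly the paper's approach: apply Proposition~\ref{prop: necessary condition irreducibility} with $i=1$ and observe that the relevant $H^0$ vanishes for $m\gg 0$ because the twisted ideal sheaf embeds in a line bundle whose $A$-degree becomes negative. Your write-up is in fact more detailed than the paper's one-line proof, and the connectedness remark, while unnecessary, is not wrong.
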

\begin{proof}
 We observe that $H^0(S, \kl(2C_2 + 2C_3 - (C_1+mA)\tensor \ki_{C_2\cap C_3})=0$ for $m\gg 0$, so the result follows from Proposition \ref{prop: necessary condition irreducibility}.
\end{proof}

\section{Special triple planes}\label{sect: triple planes}
 We consider in detail the case $S = \IP^2$, that is special triple planes. Let $\ke$ be an nearly split bundle associated to $(\kl, C_1, C_2, C_3)$ and let $d = \deg \kl$, $c_i = \deg C_i$ with $c_1\geq c_2\geq c_3\geq0$. 
 
 \begin{rem}\label{rem: split triple plane}
The above includes the case $c_3 = 0$, where the bundle splits as
\[ \ke \isom \ko_{\IP^2}(a_1) \oplus \ko_{\IP^2}(a_2)\]
with $a_i = d+c_i$. 

From Example \ref{exam: split triple planes}, we know that in this case the bundle is admissible if and only if it is gg-admissible if and only if 
\[ 0<a_2\leq a_1\leq 2 a_2.\]

Some things work slightly differently in this case, so we mostly  stick to the invariants $(a_1, a_2)$ in the split case instead of referring to it as the nearly split case with $d = c_3 = 0$. 
\end{rem}

 \begin{rem}
  By  \cite[Theorem~6.3.55]{LazarsfeldII} we see that on the projective plane an admissible 
$\ke$ is necessarily ample. The previous remark shows that the converse is not true, as $\ke = \ko(3)\oplus \ko(1)$ is ample but not admissible. Morally, the positivity of the bundle has to maintain a certain balance for admissibility.
 \end{rem}

The collected results from Section \ref{sect: suffcient and necessary conditions} give us a complete overview over the admissibility of $\ke$ also if all three curves are non-empty.  
 \begin{thm}\label{thm: admissible triple planes}
 Let $\ke$ be an nearly split bundle on $\IP^2$ as above with $c_3>0$.
 \begin{enumerate}
  \item The following are equivalent:
  \begin{enumerate}
   \item $\ke$ is trivially admissible;
   \item $\ke$ is gg-admissible;
   \item $d-c_1-c_2+2c_3\geq 0$.
  \end{enumerate}
\item The following are equivalent:
\begin{enumerate}
 \item $\ke$ is admissible 
 \item $\ke$ is gci-admissible;
  \item One of the following conditions holds:
  \begin{itemize}
   \item $d+c_3\geq c_1$, or 
   \item 
   $C_3$ is smooth at every point of $C_2 \cap C_3$, $c_1> d+c_3 \geq c_2$ and $d -c_1 +2c_2 - c_3 \geq 0$. 
   \end{itemize}
\end{enumerate}
 \end{enumerate}
 \end{thm}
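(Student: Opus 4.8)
The plan is to reduce everything to results already in Sections~\ref{sect: VB} and~\ref{sect: suffcient and necessary conditions}, using two elementary features of the projective plane: the line bundle $\ko_{\IP^2}(m)$ is globally generated exactly when $m\geq 0$ and has no non-zero sections when $m<0$, and $H^1(\IP^2,\ko_{\IP^2}(m))=0$ for every $m\in\IZ$.

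For part $(1)$, I would get $(a)\Leftrightarrow(c)$ directly from Lemma~\ref{lem: consequences of splitting}$(1)$: on $\IP^2$ the bundle $\kl\big(\sum_i(a_i-1)C_i\big)$ is $\ko_{\IP^2}\big(d+\sum_i(a_i-1)c_i\big)$, so trivial admissibility amounts to $d+\sum_i(a_i-1)c_i\geq 0$ for all $a_1+a_2+a_3=3$ with $a_i\geq 0$, and since $c_1\geq c_2\geq c_3$ the minimum of $\sum_i(a_i-1)c_i=\sum_ia_ic_i-(c_1+c_2+c_3)$ over this simplex is attained at $(0,0,3)$ with value $2c_3-c_1-c_2$; thus the condition is exactly $d-c_1-c_2+2c_3\geq 0$. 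The implication $(a)\Rightarrow(b)$ is part of Proposition~\ref{prop: implications admissibility}. For $(b)\Rightarrow(c)$, assuming $\Sym^3\ke\tensor\det\ke^\vee$ globally generated, I would restrict it to the (non-empty, since $c_3>0$) curve $C_3$; using the description of $\Sym^3\ke\tensor\det\ke^\vee|_{C_3}$ obtained by restricting \eqref{eq: split resolution of E} to $C_3$ — exactly the computation in the proof of Lemma~\ref{lem: nec for easy existence} — one of its line bundle summands on $C_3$ is $\ko_{\IP^2}(d-c_1-c_2+2c_3)|_{C_3}$, and since a line bundle of negative degree on a curve is never globally generated this forces $d-c_1-c_2+2c_3\geq 0$.

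For part $(2)$, the key point I would establish first is that on $\IP^2$ admissibility and gci-admissibility coincide: since $H^1(\IP^2,\ko_{\IP^2}(m))=0$ for all $m$, Lemma~\ref{lem: consequences of splitting}$(2)$ shows every triple cover with Tschirnhausen bundle $\ke$ is automatically a global complete intersection, so a smooth connected cover exists if and only if a smooth connected gci-cover exists; combined with Proposition~\ref{prop: implications admissibility} this is $(a)\Leftrightarrow(b)$. It then only remains to prove $(b)\Leftrightarrow(c)$, which is Theorem~\ref{thm: gci existence} applied with $S=\IP^2$, $A$ a line (so $|A|$ is base-point free), $\kl=\ko_{\IP^2}(d)$ and $C_i\in|c_iA|$: cases $(i)$ and $(ii)$ there together cover the range $d+c_3\geq c_1$, case $(iii)$ is literally the second alternative of $(c)$, and case $(iv)$ is the complementary range in which $\ke$ is not gci-admissible, hence not admissible.

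The heavy lifting is thus entirely in Theorem~\ref{thm: gci existence}, which is already proved; the remaining steps are routine, and there is no real obstacle beyond careful bookkeeping. The one point I would watch is the connectedness condition $h^0(\ke^\vee)=0$ implicit in ``admissible'': in every case where admissibility or gg-admissibility holds the inequalities just derived force $d\geq 0$, and then $h^0(\ke^\vee)=0$ follows at once from the dual of \eqref{eq: split resolution of E} on $\IP^2$, so it requires no separate argument — but one should note this so that the cited lemmas and Theorem~\ref{thm: gci existence} apply cleanly, and so that matching the four cases of that theorem with the two alternatives of $(c)$ is unambiguous.
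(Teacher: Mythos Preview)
Your proposal is correct and follows essentially the same route as the paper's proof: both derive part~$(i)$ from Lemma~\ref{lem: consequences of splitting}$(i)$, Proposition~\ref{prop: implications admissibility}, and the restriction argument of Lemma~\ref{lem: nec for easy existence} applied to $C_3$, and both derive part~$(ii)$ by combining Lemma~\ref{lem: consequences of splitting}$(ii)$ (using $H^1(\IP^2,\ko(m))=0$) with Theorem~\ref{thm: gci existence}. Your additional remark on the connectedness condition is a welcome clarification that the paper leaves implicit.
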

 \begin{proof}
We start with $(i)$. The equivalence $(a)\iff (c)$ is the first item of Lemma \ref{lem: consequences of splitting} and $(a)\implies (b)$ has been proved in Proposition \ref{prop: implications admissibility}.   Assume that $(b)$ holds. 
  Then Lemma \ref{lem: nec for easy existence} implies that for $a+b = 3$ the line bundles 
    \[\det \ke ^\vee \tensor \kl^3(a(C_1 +C_2) + bC_3)|_{C_3} = \ko_{C_3}(d + (a-1)(c_1+c_2) + (b-1) c_3)  \]
  are globally generated, which for a plane curve just means that they have non-negative degree. In particular $(a,b)=(0,3)$ gives us condition $(c)$ and we have proved the equivalence.
  
Now we come to $(ii)$.   Since $H^1(\IP^2, \ko(d))=0$ for any $d$, the equivalence of $(a)\iff(b)$ follows from Lemma \ref{lem: consequences of splitting}. The equivalence $(b)\iff (c)$ comes from Theorem \ref{thm: gci existence}.
 \end{proof}

 For convenient reference we also specialise the formulas for the invariants to the case at hand.
 \begin{lem}\label{lem: invariants triple plane}
Assume $\ke$ is admissible with triple plane $T$. Then
\begin{align*}
   K_T^2
   & = 27-12(2d+\tsum_i c_i)+(5d^2+5\tsum_i dc_i+2\tsum_i c_i^2+\tsum_{i<j}c_ic_j), \\
   \chi(T) 
   &=3+d^2-3d+\tsum_i d c_i+\frac12 \tsum_i(c_i^2-3c_i), \\
   p_g(T) & =  - \binom {d-1}{2}+\sum_i \binom{d+c_i-1 }{2}\\
   q(T) &  = h^1(S, \ke\tensor \omega_S) =  1+ p_g(T) - \chi(T)
  \end{align*}
  In the split case  (Remark \ref{rem: split triple plane})   the same formulas hold with $d = c_3 = 0$ and $c_1 = a_1$, $c_2 = a_2$. 
 \end{lem}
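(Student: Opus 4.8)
The plan is to derive everything by substituting the plane data $K_S = \ko_{\IP^2}(-3)$, $K_S^2 = 9$, $\chi(S) = 1$, $\kl = \ko(d)$, $C_i \in |c_i H|$ (so $C_i^2 = c_i^2$, $K_S.C_i = -3c_i$, $C_i.C_j = c_ic_j$, $\kl^2 = d^2$, $\kl.K_S = -3d$, $\kl.C_i = dc_i$) into the general formulas of Corollary \ref{cor: invariants}. The formulas for $K_T^2$ and $\chi(T)$ are then immediate bookkeeping: plug in and collect terms. For instance $K_T^2 = 3\cdot 9 + 4(-3)(2d + \sum c_i) + (5d^2 + 5\sum dc_i + 2\sum c_i^2 + \sum_{i<j}c_ic_j)$, which is exactly the claimed expression, and similarly for $\chi(T)$.

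For $p_g(T)$, Corollary \ref{cor: invariants} gives $p_g(T) = h^0(S, \ke \tensor \omega_S) + p_g(S)$, and $p_g(\IP^2) = 0$. The plan is to compute $h^0(\IP^2, \ke(-3))$ from the resolution \eqref{eq: split resolution of E} twisted by $\ko(-3)$:
\[
 0 \to \ko(d-3) \to \bigoplus_{i=1}^3 \ko(d+c_i-3) \to \ke(-3) \to 0.
\]
Since $H^1(\IP^2, \ko(k)) = 0$ for all $k$, taking cohomology yields $h^0(\ke(-3)) = \sum_i h^0(\ko(d+c_i-3)) - h^0(\ko(d-3)) = \sum_i \binom{d+c_i-1}{2} - \binom{d-1}{2}$, using the convention $\binom{m}{2} = \frac{m(m-1)}{2}$, which vanishes correctly for $m \leq 1$; this matches the stated formula. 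Finally, for $q(T)$ one uses $q(T) = h^1(S,\ke\tensor\omega_S) + q(S)$ with $q(\IP^2) = 0$, together with the identity $\chi(T) = 1 - q(T) + p_g(T)$ valid for any smooth projective surface (equivalently $\chi(\ko_T) = h^0(\ko_T) - h^1(\ko_T) + h^2(\ko_T)$, with $h^2(\ko_T) = p_g(T)$ by Serre duality), giving $q(T) = 1 + p_g(T) - \chi(T)$.

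The split case amounts to observing that when $c_3 = 0$ the same computations go through verbatim with $C_3$ empty, i.e. formally setting $d = c_3 = 0$ and renaming $c_1 = a_1$, $c_2 = a_2$; the resolution \eqref{eq: split resolution of E} degenerates but the cohomology count is unchanged, and the general formulas of Corollary \ref{cor: invariants} were derived without assuming $C_3 \neq \emptyset$. There is essentially no obstacle here: the only mild subtlety is making sure the binomial convention handles negative arguments consistently, and that the Noether-type identity relating $\chi$, $p_g$ and $q$ is the correct route to $q(T)$ rather than attempting to compute $h^1(\ke(-3))$ directly (which would require Riemann--Roch on $\IP^2$ anyway and reproduces the same answer).
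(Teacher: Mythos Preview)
Your approach is correct and matches the paper's exactly: specialise Corollary \ref{cor: invariants} to $S=\IP^2$, and for $p_g$ compute $h^0(\ke(-3))$ from the twisted resolution \eqref{eq: split resolution of E} using $H^1(\IP^2,\ko(k))=0$. One small slip to fix: the polynomial expression $m(m-1)/2$ does \emph{not} vanish for $m<0$ (e.g.\ $m=-1$ gives $1$), so in the case $d=0$ the term $\binom{d-1}{2}$ must be read with the combinatorial convention $\binom{m}{2}=0$ for $m<2$ to match $h^0(\IP^2,\ko(d-3))=0$; you already flag this subtlety at the end, so simply state the correct convention there rather than asserting the polynomial form handles it.
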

\begin{proof}
 This is Corollary \ref{cor: invariants}, where the expression for $p_g$ is computed via the resolution  \eqref{eq: split resolution of E} and standard cohomology computations in the plane. 
\end{proof}

We now want to give an overview of the kinds of surfaces we can construct as special triple planes. Let us start with two simple examples.

\begin{exam}[The split case with $a_1 = a_2$]\label{exam: split a_1=a_2}
 If $\ke$ is split and $a = a_1 = a_2$, then we are considering hypersurfaces $T$ of bidegree $(a,3)$ in $\IP^2\times \IP^1$. By adjunction, the canonical divisor $K_T$ has bidegree $(a-3, 1)$ and is ample for $a\geq 4$. The general fibre of the projection $T\to \IP^1$ is a smooth  curve of genus $a$, so we get ruled surfaces for $a = 1,2$. For $a=3$ we get an elliptic fibration which coincides with the canonical map, thus a properly elliptic surface. 
\end{exam}

 \begin{exam}[The case $c_1 = c_2 = c_3$]\label{exam: all ci equal}
If   $c = c_1 = c_2 = c_3\geq 1$ and $d\geq 0$, then $\ke$ is trivially admissible by Theorem \ref{thm: admissible triple planes}. This case can easily be treated differently as follows: The projective bundle $\tilde \IP\isom \IP^2 \times \IP^2$ and the two defining equations \eqref{eq: eq for IP} and \eqref{eq: etatilde} for a triple cover $T$  are then bihomogenous of bidegree $(c,1)$ and $(d,3)$, that is, $T$ is a complete intersection in $\IP^2\times \IP^2$, from which its geometry is easily understood. Let $\pi'\colon \tilde \IP \to \IP^2$ be the projection to the second factor.

We discuss  some cases for a general choice of an equation.
\begin{description}
 \item[Cases $(d, c) = (0,1)$ and $(d, c) = (0,2)$] In the first case, the equation of degree $(0,3)$ defines  a product $\IP^2 \times E$ for an elliptic curve $E$. 
 
 Then an equation of bidegree $(1,1)$ cuts out a line in every fibre of the projection to $E$, so that $T\to E$ is a minimal ruled surface. 
 
 An equation of bidegree $(2, 1)$ instead makes $T\to E$ into a conic bundle, so $T$ is the fourfold blow up of a ruled surface over $E$. 
 \item[Case $(d,c) = (1,1)$] In this case $T$ is a complete intersection of bidegree $(1,1), (1,3)$, so the projection to the second factor actually has degree $1$, i.e., we have a birational map $\pi'\colon T \to \IP^2$. 
 Since any birational map is a composition of blow ups, a comparison of $K_T^2$ shows that $\pi'$ is the composition of $15$ blow ups. 
  \item[Case $(d,c) = (0,3)$] Then again the equation of bidegree $(0,3)$ shows that $T$ is contained in the a product $\IP^2\times E$ for an elliptic curve $E$. Since by adjunction
  $\omega_T \isom \pi'^*\ko_{\IP^2}(1)$, and $\pi'\colon T \to E$ is an elliptic fibration we see that $T$ is a minimal elliptic fibration of Kodaira dimension $1$.  
   \item[Cases $(d, c) = (1,2)$ and $(d, c) = (2,1)$]
   In both cases the map $\pi'\colon T \to \IP^2$ has degree $2$ and the canonical bundle is as in the previous case  $\omega_T \isom \pi'^*\ko_{\IP^2}(1)$. In particular $\omega_T$ is big and nef and $T$ is a minimal surface of general type. 
   
   These are Horikawa surfaces and, indeed, the canonical model of any minimal surface of general type with $\chi(T)=4$ and $K^2_T = 2$ is, via the canonical map, a double cover of the plane branched over an octic. 
   \item[The cases $d+c\geq 4$] In all the remaining cases we have $d+c\geq4$, and by adjunction we have that $\omega_T = \pi^*\ko_{\IP^2}(c+d-3)\tensor \pi'^*\ko_{\IP^2}(1)|_T$ is ample, so $T$ is minimal of general type. 
   \end{description}
   \end{exam}

We begin with a rough estimate to be refined later. 
\begin{lem}\label{lem: rough estimate for min gen type triple planes}
Assume $\ke$ is admissible. Then any  smooth triple cover $f\colon T \to \IP^2$ with Tschirnhausen bundle $\ke$ is a minimal surface of general type if one of the following holds:
\begin{enumerate}
 \item $c_1\geq 1$ and $d+c_3\geq 4$, 
 \item $\ke$ is split and $a_1\geq 4$.
\end{enumerate}
\end{lem}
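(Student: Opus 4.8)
The goal is to show that under either hypothesis $T$ is minimal of general type, and the natural tool is Proposition~\ref{prop: general criterion min gen type}: it suffices to exhibit a globally generated ample line bundle $\kl$ on $\IP^2$ such that $\ke\tensor\omega_{\IP^2}\tensor\kl^\vee$ is globally generated. On $\IP^2$ every line bundle of non-negative degree is globally generated, and the natural choice is $\kl=\ko_{\IP^2}(1)$, which is ample and globally generated. So the whole statement reduces to checking that $\ke(-3-1)=\ke(-4)$ is globally generated under each of the two numerical hypotheses.

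\textbf{Step 1: the split case.} If $\ke$ is split, then $\ke(-4)\isom\ko(a_1-4)\oplus\ko(a_2-4)$, and since $a_1\geq a_2$, admissibility gives (by Remark~\ref{rem: split triple plane}, i.e. Example~\ref{exam: split triple planes}) the bound $a_2\geq a_1/2$; combined with $a_1\geq 4$ this yields $a_2\geq 2$, hence in fact we only get $a_2-4\geq -2$, which is \emph{not} enough. So instead I would not use $\kl=\ko(1)$ uniformly here; rather I use the sequence \eqref{eq: split resolution of E} / the splitting directly together with a finer argument, or note that we must test global generation of $\ke\tensor\omega_S$ after twisting down by a \emph{globally generated} ample bundle, and on $\IP^2$ one can take $\kl=\ko(1)$ only when $a_2\geq 4$. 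The honest route in the split case: $\ke\tensor\omega_{\IP^2}=\ko(a_1-3)\oplus\ko(a_2-3)$, and by the admissibility inequality $2a_2\geq a_1$ we get $a_2-3\geq a_1/2 - 3$; with $a_1\geq 4$... still delicate. The correct and clean argument is: $\ke\tensor\omega_{\IP^2}$ is ample iff both summands are, i.e. iff $a_2\geq 4$ — but that is false for $\ke=\ko(5)\oplus\ko(3)$ with $a_1\geq 4$. Hence in the split case one shows instead that $\omega_T$ is big and nef directly via Example~\ref{exam: split a_1=a_2} when $a_1=a_2$, and for $a_1>a_2$ one uses that $\omega_T=\ko_\IP(1)\tensor\pi^*\omega_{\IP^2}|_T$ and that $\ko_\IP(1)$ restricted to $T$ is already semiample because $\Sym^k(\ke\tensor\omega_{\IP^2})$ becomes globally generated for large $k$ (Proposition~\ref{prop: split case globally generated} applied to all the summands $\ko(k(a_i-3)+\cdots)$, all of which have non-negative degree once $a_1,a_2\geq 3$, which holds since $a_1\geq 4$ and $2a_2\geq a_1\geq 4$ forces $a_2\geq 2$, so... again $a_2=2$ is a problem).

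\textbf{Step 1, corrected.} I realise the clean statement is that $\ke\tensor\omega_{\IP^2}$ is \emph{nef} and \emph{big}, not necessarily ample, and that this suffices for $T$ minimal of general type because $\omega_T$ will then be nef and big and, being a triple-cover canonical bundle with no $(-1)$-curves contracted, actually yields minimality via Corollary~\ref{cor: minimal general type}. Concretely: in the split case with $a_1\geq 4$, the admissibility bound $2a_2\geq a_1$ gives $a_2\geq 2$, and one checks that all the line bundles $\kl_1^{k-1}\tensor\kl_2^{3-k-1}\tensor\omega_{\IP^2}^{\otimes m}$ appearing in $\Sym^m(\ke)\tensor\omega_{\IP^2}^{\otimes m}$ have degree $\geq 0$ for $m$ large (degree is $m(\cdot)+\cdots$ linear in $m$ with positive leading coefficient once $a_1,a_2,a_1+a_2\geq 3$ appropriately — here $a_1\geq 4$, $a_2\geq 2$, $a_1\geq a_2$ suffice to make every such degree grow), hence $P_m(T)\to\infty$ and $\omega_T$ is semiample-ish; combined with $K_T^2>0$ (direct from Lemma~\ref{lem: invariants triple plane}: $K_T^2=27-12(a_1+a_2)+2(a_1^2+a_2^2)+a_1a_2 \geq 0$ for $a_1\geq 4$, $a_2\geq 2$, $2a_2\geq a_1$, which one verifies by a short case check or convexity) and $P_2(T)=h^0(\Sym^2\ke\tensor\omega_{\IP^2}^2)>0$ and $h^1(\Sym^2\ke\tensor\omega_{\IP^2}^2)=0$ (all summands have degree $\geq -1$... again the $a_2=2$ edge needs care), Corollary~\ref{cor: minimal general type} finishes.

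\textbf{Step 2: the non-split case $(i)$.} When $c_1\geq 1$ and $d+c_3\geq 4$: I use that $\ke$ sits in \eqref{eq: split resolution of E}, so $\ke\tensor\omega_{\IP^2}\tensor\ko(-1) = \ke(-4)$ is a quotient of $\tilde\ke(-4)=\bigoplus_i\ko(d+c_i-4)$; since $c_1\geq c_2\geq c_3$ and $d+c_3\geq 4$, each $d+c_i-4\geq d+c_3-4\geq 0$, so $\tilde\ke(-4)$ is a direct sum of globally generated line bundles, hence globally generated, hence so is its quotient $\ke(-4)$. Now apply Proposition~\ref{prop: general criterion min gen type} with $\kl=\ko_{\IP^2}(1)$: $T$ is minimal of general type. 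This case is immediate and is the model I should have used; the split case is genuinely the awkward one because the resolution \eqref{eq: split resolution of E} has the twisting line bundle $\kl=\ko(0)$ and the quotient map trick loses positivity. \textbf{The main obstacle} is therefore Step~1 (the split case): there the clean "twist down by $\ko(1)$" argument fails for the extremal bundles like $\ko(5)\oplus\ko(3)$ or $\ko(4)\oplus\ko(2)$, and one must fall back on Corollary~\ref{cor: minimal general type} and verify $K_T^2>0$, $P_2>0$, $h^1(\omega_T^2)=0$ by explicit (but short) degree bookkeeping using $2a_2\geq a_1\geq 4$; this is routine but fiddly, especially at the boundary value $a_2=2$, $a_1=4$.
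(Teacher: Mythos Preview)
Your Step~2 for case~$(i)$ is exactly the paper's argument: $\tilde\ke\tensor\omega_{\IP^2}(-1)=\bigoplus_i\ko(d+c_i-4)$ is globally generated because $d+c_3\geq4$, hence so is its quotient $\ke(-4)$, and Proposition~\ref{prop: general criterion min gen type} applies.

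For case~$(ii)$ you eventually land on the right strategy (Corollary~\ref{cor: minimal general type}), which is also what the paper does, but you are missing the one observation that makes it clean rather than ``fiddly'': on $\IP^2$ one has $H^1(\IP^2,\ko(m))=0$ for \emph{every} integer $m$. Since $\Sym^2\ke\tensor\omega_{\IP^2}^2$ is a direct sum of line bundles when $\ke$ is split, the vanishing $h^1(T,\omega_T^2)=h^1(\Sym^2\ke\tensor\omega_{\IP^2}^2)=0$ is automatic --- there is no boundary case to worry about at $a_2=2$. With that in hand, the remaining two conditions are immediate: $P_2(T)=h^0(\Sym^2\ke(-6))>0$ because the summand $\ko(2a_1-6)$ has degree $\geq2$; and $K_T^2>0$ follows from Lemma~\ref{lem: invariants triple plane} once you use $a_1\geq4$ and $a_2\geq a_1/2\geq2$ (a one-line check of the quadratic $27-12(a_1+a_2)+2a_1^2+2a_2^2+a_1a_2$ on this region). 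The paper in fact remarks that this computation shows $a_1\geq4$ is both necessary and sufficient.

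So your proof is correct in substance and matches the paper's route; the several false starts in Step~1 and the anxiety about edge cases all stem from overlooking the blanket $H^1$-vanishing for line bundles on $\IP^2$.
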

\begin{proof}
 Under the first condition the bundle $\tilde \ke \tensor \omega_S(-1)$ is globally generated, hence Proposition \ref{prop: general criterion min gen type} applies. 
 
 In the other case, since all symmetric powers of a split bundle on $\IP^2$ have vanishing $H^1$, we apply the criterion of Corollary \ref{cor: minimal general type} combined with the formulas from Lemma \ref{lem: invariants triple plane}. We find then that $a_1\geq 4$ is both a necessary and sufficient condition. 
\end{proof}

\begin{thm}\label{prop: triple planes not general type}
Let $f\colon T \to S$ be a special triple plane with Tschirnhausen bundle $\ke$. If $T$ is not of general type, then the invariants are as follows: 
\begin{center}
  \begin{tabular}{cccc|ccc}
 \toprule
 $p_g$ & $q$ & $K_T^2$ & $\kappa$ & $d$ & $( c_1, c_2, c_3)$ & admissible\\
 \midrule
 $0$ & $0$ & $8$ & $-\infty$ & $0$ & $(1,1,0)$ & gg (split)\\
 $0$ & $0$ & $3$ & $-\infty$ & $0$ & $(2,1,0)$ & gg (split)\\
 $0$ & $0$ & $-1$ & $-\infty$ & $0$ & $(2,2,0)$& gg (split)\\
  $0$ & $1$ &  $0$ & $-\infty$ & $0$ & $(1,1,1)$ & gg\\
 $0$ & $1$ &  $-9$& $-\infty$ & $0$ & $(2,2,2)$ & gg\\
 $0$ & $0$ &  $-4$ &$-\infty$ & $1$ & $(1,1,1)$ & gg\\
 \midrule
 $1$ & $0$ & $-1$ & $0$ & $0$ & $(3,2,0)$ &gg (split)\\
  $1$ & $0$ &  $-3$ & $0$ & $1$ & $(2,1,1)$ & gg\\
\midrule 
 $2$ & $0$ & $0$ & $1$ & $0$ & $(3,3,0)$ & gg (split)\\
 $2$ & $0$ &  $-1$ & $1$ & $1$ & $(2,2,1)$ & \checkmark \\
 $3$ & $1$ &  $0$ & $1$ & $0$ & $(3,3,3)$ & gg\\
    \bottomrule
 \end{tabular}
\end{center}
\end{thm}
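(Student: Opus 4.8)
The plan is to reduce the statement to finitely many candidate building data, compute the numerical invariants, and then pin down the Kodaira dimension.

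\emph{Step 1: reduction to a finite list.} If $T$ is not of general type then it is in particular not minimal of general type, so the sufficient criterion of Lemma~\ref{lem: rough estimate for min gen type triple planes} must fail for $\ke$. Since $c_1\geq c_3\geq 1$ whenever $\ke$ is genuinely nearly split, this leaves only two possibilities: $\ke$ is split with $a_1\leq 3$, or $\ke$ is nearly split with $d+c_3\leq 3$. Imposing admissibility on top of this --- via Example~\ref{exam: split triple planes} (equivalently Remark~\ref{rem: split triple plane}) in the split case and Theorem~\ref{thm: admissible triple planes} in the nearly split case --- cuts the possibilities down to an explicit finite list of tuples $(d;c_1,c_2,c_3)$, which I would enumerate; the ``admissible'' column of the table is then read off from those same two results.

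\emph{Step 2: discarding the general-type candidates.} For each tuple on the list I would compute $K_T^2$ and $p_g(T)$ from Lemma~\ref{lem: invariants triple plane}; concretely, $p_g(T)=h^0(\IP^2,\ke\otimes\omega_{\IP^2})$ and all the other invariants are obtained by running the resolution~\eqref{eq: split resolution of E} through cohomology of line bundles on $\IP^2$, where $H^1$ vanishes identically. One checks that every tuple \emph{not} occurring in the table satisfies $K_T^2>0$ and $p_g(T)>0$; as $p_g(T)>0$ gives $P_2(T)\geq 1$, the proof of Corollary~\ref{cor: minimal general type} shows that such a $T$ is of general type, contrary to hypothesis. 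Hence the building data of $T$ is one of the eleven tuples in the table, and then $K_T^2$, $\chi(T)$, $p_g(T)$ and $q(T)=1+p_g(T)-\chi(T)$ are exactly the tabulated values.

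\emph{Step 3: the Kodaira dimension.} It remains to verify $\kappa(T)$. The split families with $a_1=a_2\in\{1,2,3\}$ and the families with $c_1=c_2=c_3$ (together with $(1;1,1,1)$) are already identified in Example~\ref{exam: split a_1=a_2} and Example~\ref{exam: all ci equal}, giving $\kappa=-\infty$ or $1$ as listed. For the split family $(0;3,2,0)$ one computes from Corollary~\ref{cor: invariants} that $P_m(T)=h^0(\IP^2,\Sym^m\ke\otimes\omega_{\IP^2}^{\otimes m})=1$ for all $m\geq 1$ --- the equality in the plurigenus formula being valid here because $\ke$ is split --- so $\kappa(T)=0$. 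For the three families with $\chi(T)=3$, namely $(0;3,3,0)$, $(0;3,3,3)$ and $(1;2,2,1)$, we have $\kappa(T)\geq 0$ from $p_g(T)>0$, and since no surface of Kodaira dimension $0$ has $\chi(\ko)=3$, the classification of surfaces forces $\kappa(T)=1$. The only remaining tuple, $(1;2,1,1)$ (with $\chi(T)=2$, $p_g(T)=1$, $q(T)=0$), is settled through its explicit geometric description later in the section, where $T$ is exhibited as a blow-up of a K3 surface; in particular $\kappa(T)=0$.

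\emph{Main obstacle.} The delicate point is that $K_T^2$ alone is uninformative: the triple covers in question need not be minimal, so $K_T^2\leq 0$ is compatible with $T$ being of general type (a blow-up of a surface with $K^2\geq 1$), and conversely it does not force $\kappa<2$. One therefore genuinely has to control the plurigenera, and the lower bound of Corollary~\ref{cor: invariants} is only an equality under an auxiliary $H^1$-vanishing that can fail for a genuinely nearly split $\ke$ on $\IP^2$ (it is governed by an $H^2$ of a very negative line bundle). This is precisely why the last family $(1;2,1,1)$ has to be treated via its geometric model rather than by a bare cohomology computation.
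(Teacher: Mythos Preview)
Your overall strategy matches the paper's, but there is one concrete omission and one subtler issue.

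\textbf{The omission.} In Step~3 you never treat the split case $(a_1,a_2)=(2,1)$, i.e.\ $(d;c_1,c_2,c_3)=(0;2,1,0)$. It is not covered by Example~\ref{exam: split a_1=a_2} (which only handles $a_1=a_2$), nor by your plurigenus computation (you only do $(3,2)$), nor by any of your remaining clauses. Here $p_g=q=0$ and $K_T^2=3$, and nothing you have said excludes $\kappa\in\{0,1\}$: there exist surfaces with $p_g=q=0$ of every Kodaira dimension. The paper closes this exactly as it closes $(3,2)$, by computing $P_m(T)=0$ for all $m$ via Corollary~\ref{cor: invariants}, which forces $\kappa=-\infty$.

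\textbf{The subtler point.} Your $\chi=3$ argument for $(1;2,2,1)$ only rules out $\kappa=0$; it does not rule out $\kappa=2$. Read literally, the theorem assumes $T$ is not of general type, so your deduction $\kappa=1$ is formally valid --- but then you have not shown that this row of the table is actually populated, i.e.\ that the building datum $(1;2,2,1)$ really produces a surface with $\kappa<2$. The paper settles this via the explicit elliptic pencil in Example~\ref{exam: triple plane kappa = 1}. A shorter route in the spirit of your argument: compute $P_2(T)=h^0(\Sym^2\ke\otimes\omega_{\IP^2}^2)=3$ from \eqref{eq: symmetric power of nearly split sequence}; were $T$ of general type, its minimal model would satisfy $K_{\min}^2=P_2-\chi=0$, contradicting $K_{\min}^2\geq1$.

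\textbf{Minor.} Your displayed plurigenus formula $P_m(T)=h^0(\Sym^m\ke\otimes\omega_{\IP^2}^{\otimes m})$ for $(0;3,2,0)$ drops the subtracted term $h^0(\Sym^{m-3}\ke\otimes\det\ke\otimes\omega_{\IP^2}^m)$ from Corollary~\ref{cor: invariants}; it happens to vanish here, but ``because $\ke$ is split'' only justifies the equality in the inequality, not the vanishing of that second term.
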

\begin{proof}
We start with the split case ($c_3=d = 0$). From Remark \ref{rem: split triple plane} and Lemma \ref{lem: rough estimate for min gen type triple planes} we have very few cases to consider, namely $1\leq a_2\leq a_1 \leq \min\{3, 2a_2\}$. The invariants, except the Kodaira dimension, are computed from Lemma \ref{lem: invariants triple plane} and the cases with $a_1 = a_2$ have been treated in Example \ref{exam: split a_1=a_2}.
To determine the Kodaira dimension in the remaining two cases we use the splitting of the bundle and Corollary \ref{cor: invariants} to show that  all $P_m(T)=0$ for $(a_1, a_2) = (2,1)$ and  all $P_m(T)=1$ for $(a_1, a_2) = (3,2)$. The latter case  is the resolution of the linear projection from an internal point of a quartic K3 surface in $\IP^3$, see Example \ref{exam: quartic}.

Now let us turn to the case where $c_3>0$. 
 An easy estimation using the formulas from Lemma \ref{lem: invariants triple plane} shows that if $d\geq 2$, then $K_T^2>0$ and $p_g>0$, so $T$ is of general type by Corollary \ref{cor: minimal general type}. Note that by Theorem  \ref{thm: admissible triple planes} the case $d=0$ can only occur if $c_1 = c_2 = c_3$ and these have been treated in Example \ref{exam: all ci equal}. 
 
So the remaining cases with $\kappa<2$ have  $d=1$, not all $c_i$ equal and,  by Lemma \ref{lem: rough estimate for min gen type triple planes},  $c_3\leq 2$, which by the inequalities in Theorem \ref{thm: admissible triple planes} allows for $  (c_1, c_2, c_3) $ to be one of
\begin{equation}\label{eq: cases d = 1}
 (2,1,1), (2,2,1), (3,2,1), (3,2,2),  (3,3,2), (4,2,1), (4,3,2), (5,3,2).
 \end{equation}
 Computing the invariants in each case we can exclude the cases where both $K_T^2>0$ and $p_g>0$, which leaves us with the two cases in the table. 
 
 A more detailed descriptions of the cases including a justification of the Kodaira dimension given can be found in Example \ref{exam: triple plane kappa = 1} and Section \ref{sect: K3 family }.
\end{proof}

The remaining special triple planes are all of general type. We conclude the general treatment by pointing out which of them are not minimal or give surfaces with reasonably small invariants.
\begin{prop}\label{prop: triple planes non min of gen type}
Let $f\colon T \to S$ be a special triple plane of general type with Tschirn\-hausen bundle $\ke$.
If $T$ is not minimal or $p_g(T)\leq 7$, then the invariants are as follows: 
\begin{center}
 \begin{tabular}{ccccc|ccc}
 \toprule
 $p_g$ & $q$ & $K_T^2$ & $\kappa$ & minimal  & $d$ & $( c_1, c_2, c_3)$ & admissible\\
 \midrule
   $3$ & $0$ &  $3$ & $2$& \checkmark & $0$ & $(4,2,0)$ & gg (split)\\
  $3$ & $0$ &  $2$ & $2$& \checkmark & $1$ & $(2,2,2)$ & gg\\
 $3$ & $0$ &  $2$ & $2$ & \checkmark & $2$ & $(1,1,1)$ & gg\\
 \midrule
    $4$ & $0$ &  $5$ & $2$& \checkmark & $0$ & $(4,3,0)$ & gg (split)\\
 $4$ & $0$ &  $5$ & $2$ & no  & $1$ & $(3,2,1)$ & \checkmark\\
 \midrule
 $5$ & $0$ &  $9$ & $2$& \checkmark & $1$ & $(3,2,2)$ & gg\\
 $5$ & $0$ &  $8$ & $2$& \checkmark & $2$ & $(2,1,1)$ & gg\\
 \midrule
 $6$ & $0$ &  $9$ & $2$& \checkmark & $0$ & $(4,4,0)$ & gg (split)\\
 \midrule
 $7$ & $0$ &  $14$ & $2$& \checkmark & $0$ & $(5,3,0)$ & gg (split)\\
 $7$ & $0$ &  $15$ & $2$& no & $1$ & $(4,2,1)$ & \checkmark\\
 $7$ & $0$ &  $17$ & $2$& \checkmark & $1$ & $(3,3,2)$ & \checkmark \\
 $7$ & $0$ &  $15$ & $2$ & \checkmark& $2$ & $(2,2,1)$ & gg\\
    \bottomrule
 \end{tabular}
\end{center}
\end{prop}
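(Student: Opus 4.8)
The plan is to enumerate, exhaustively, all admissible nearly split bundles on $\IP^2$ and pick out those which are either non-minimal or have $p_g(T)\leq 7$. This is a finite computation once we bound the relevant parameters, and the tools for carrying it out are already in place: Theorem \ref{thm: admissible triple planes} describes exactly which $(d;c_1,c_2,c_3)$ are admissible, Lemma \ref{lem: invariants triple plane} computes $p_g,q,K_T^2,\chi$, and Corollary \ref{cor: minimal general type} together with Lemma \ref{lem: rough estimate for min gen type triple planes} detects minimality. I would treat the split case ($c_3=d=0$) and the genuinely nearly split case ($c_3>0$) separately, as in the proof of Theorem \ref{prop: triple planes not general type}.

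First, in the split case, Remark \ref{rem: split triple plane} restricts to $0<a_2\leq a_1\leq 2a_2$, and Lemma \ref{lem: rough estimate for min gen type triple planes}(ii) says $T$ is minimal of general type as soon as $a_1\geq 4$ — here one must also note that in the split case, all symmetric powers having vanishing $H^1$ forces $T$ to be minimal, so ``not minimal'' never occurs for split $\ke$. The bound $p_g(T)\leq 7$ then gives a short finite list of pairs $(a_1,a_2)$ with $a_1\geq 4$ (namely $(4,2),(4,3),(4,4),(5,3)$, after discarding those already of smaller Kodaira dimension handled in Theorem \ref{prop: triple planes not general type}); compute invariants from Lemma \ref{lem: invariants triple plane} and record them. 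Second, in the case $c_3>0$: Lemma \ref{lem: invariants triple plane} shows that for $d$ large the surface has $K_T^2>0$ and $p_g$ large, so only finitely many $(d;c_1,c_2,c_3)$ survive the constraint $p_g\leq 7$; combined with admissibility (Theorem \ref{thm: admissible triple planes}, which among other things forces $d\leq$ a small bound once $p_g$ is bounded) and with Lemma \ref{lem: rough estimate for min gen type triple planes}(i) (minimality of general type once $d+c_3\geq 4$), this cuts the search to a handful of tuples. The non-minimal cases must then be identified by hand — these are exactly the ones where $d+c_3\leq 3$ yet $T$ is still of general type, for which one checks $K_T^2$ against $\chi(T)$ and uses Corollary \ref{cor: minimal general type} via the non-vanishing/vanishing of the relevant cohomology groups to see that $h^1(\Sym^2\ke\tensor\omega_S^2)\neq 0$; the tuples $(1;3,2,1)$ and $(1;4,2,1)$ are the non-minimal ones.

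The main obstacle is making the enumeration genuinely exhaustive without drowning in cases: one needs clean a priori inequalities bounding $c_1$ (hence all $c_i$) and $d$ in terms of the assumption $p_g(T)\leq 7$, using the expression $p_g(T)=-\binom{d-1}{2}+\sum_i\binom{d+c_i-1}{2}$ from Lemma \ref{lem: invariants triple plane}, intersected with the admissibility inequalities $d+c_3\geq c_1$ or $c_1>d+c_3\geq c_2$, $d-c_1+2c_2-c_3\geq 0$. A secondary delicate point is the non-minimality verdict: Lemma \ref{lem: rough estimate for min gen type triple planes} only gives a sufficient condition for minimality, so in the borderline tuples with $d+c_3=3$ one genuinely has to compute $h^1(S,\Sym^2\ke\tensor\omega_S^2)$ — via the sequence \eqref{eq: symmetric power of nearly split sequence} for $\Sym^2$, which reduces it to explicit cohomology of line bundles on $\IP^2$ plus the cohomology of $\ke\tensor\kl\tensor\omega_S^2$ obtained from \eqref{eq: split resolution of E} — and show it is nonzero, equivalently that $K_T^2$ exceeds the value predicted for the minimal model. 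Beyond that, everything is bookkeeping: assemble the surviving tuples into the table, record whether admissibility is ``gg'', ``gg (split)'', or merely ``\checkmark'' (gci but not gg), and defer the finer geometric descriptions and the minimality/Kodaira-dimension justifications in the two borderline K3-related or non-minimal families to the subsequent examples and Section \ref{sect: K3 family }.
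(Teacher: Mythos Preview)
Your approach is correct and matches the paper's: separate split from non-split, use Lemma \ref{lem: invariants triple plane} for the invariants, and decide minimality via Corollary \ref{cor: minimal general type} by computing $h^1(\Sym^2\ke(-6))$ from the sequence \eqref{eq: symmetric power of nearly split sequence}. The paper streamlines one step you leave case-by-case: applying Serre duality to that cohomology sequence shows $H^1(\Sym^2\ke(-6))\hookrightarrow H^0(\tilde\ke^\vee(3-d))^\vee$, which already vanishes once $3-2d-c_3<0$, so minimality is automatic for $d\geq 2$ and only the tuples with $d=c_3=1$ need individual inspection --- note also that the first term in that sequence is $\kl\otimes\tilde\ke$, a sum of line bundles, so your second appeal to \eqref{eq: split resolution of E} is unnecessary.
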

\begin{proof}
The split cases all satisfy the criterion given in Lemma \ref{lem: rough estimate for min gen type triple planes} and these are thus easily listed using Remark \ref{rem: split triple plane}.

For the cases with $c_3>0$ we want to use the criterion of Corollary \ref{cor: minimal general type} and determine, when $h^1(T, \omega_T^{\tensor2}) = h^1(\Sym ^2 \ke(-6)) = 0$. Partially applying Serre duality to the cohomology sequence of \eqref{eq: symmetric power of nearly split sequence} we get
 \[ 
\begin{tikzcd}
0\arrow{r} & H^1(\Sym^2\ke(-6))\arrow{r}&H^0(\tilde\ke^\vee(3-d))^\vee\arrow{r}&H^0(\Sym^2\tilde\ke^\vee(3))^\vee
\end{tikzcd}
\]
and by the splitting of $\tilde \ke$ the group in the middle is non-zero if and only if $3 - 2d - c_3\geq 0$, so $T$ is minimal of general type as soon as $d\geq 2$.

Again, the case where all $c_i$ are equal, which includes all cases with $d =0$,  was completely treated in Example \ref{exam: all ci equal}. 
It remains to look at the cases with $d = c_3 = 1$ and not all $c_i$ equal, which have already been listed in \eqref{eq: cases d = 1}. In both cases the sequence gives  $h^1(T, \omega_T^{\tensor2}) = 1$. 

Listing the remaining cases with $p_g\leq 7$ is an easy task once we note that Lemma \ref{lem: invariants triple plane} gives that $p_g\geq 8 $ if $d\geq 3$, so only the cases with $d=2$ have to be considered. 
\end{proof}
Some of the cases in the tables are discussed  in the examples below. 

\begin{exam}[The split case with $(a_1, a_2) = (3,2)$]\label{exam: quartic}
Let $u,v$ be the coordinates on the fibre and, as usual, $x,y,z$ the coordinates on the base of
\[ \pi\colon \IP = \IP(\ko_{\IP^2}(3) \oplus \ko_{\IP^2}(2)) \to S.\]
Let  $\km:= \ko_\IP(1) \tensor \pi^*\ko_{\IP^2}(-2)$. Then $\pi_*\km \isom \ko_{\IP^2}(1)\oplus \ko_{\IP^2}$, thus has a base of sections $xu$, $yu,$ $zu,$ $v$ and defines a morphism
$ \phi \colon \IP\to \IP^3$,
which is the blow-up at the coordinate point $ p = (0:0:0:1)$ and contracts the divisor $E = \{u=0\}$. Note that $\ko_{\IP}(E) = \ko_{\IP}(1)\tensor \pi^*\ko_{\IP^2}(-3)$. The triple cover $f\colon T\to \IP^2$ with Tschirnhausen bundle $\ko_{\IP^2}(3)\oplus\ko_{\IP^2}(2)$ is defined by a section of $ \ko_\IP(3)\tensor \pi^*\ko_{\IP^2}(-6) = \km^{\tensor 4} (-E)$. In other words, $\phi|_T\colon T \to \phi(T)= \bar T$ is the blow up of a quartic K3 surface $\bar T$ passing through $p$ and the triple cover is the resolution of the linear  projection $\bar T \dashrightarrow \IP^2$ from the internal point $p$.
\end{exam}

\begin{exam}[The case $d = 1$, $(c_1, c_2, c_3) = (2,2,1)$]\label{exam: triple plane kappa = 1}
Let us consider a triple cover $T \subset \tilde \IP$ defined by the equations \eqref{eq: eq for IP} and \eqref{eq: etatilde}, where $\tilde \eta$ does not contain the monomial $w^3$. In these coordinates we compute
\[ H^0(T, \omega_T) \isom H^0(\tilde \IP, \ko_{\tilde \IP}(1) \tensor \tilde \pi^*\omega_{\IP^2}) \isom H^0(\IP^2, \tilde \ke(-3)) = \langle u,v\rangle.\]

Let us compute the general element in the canonical pencil, $C_\lambda = \{ v - \lambda u=0\}\subset T$. Let $\gamma'_\lambda:=\gamma_1+\lambda\gamma_2$ and $C'_{\lambda}:=Z(\gamma'_{\lambda})$. We note first that the surface 
\[S_\lambda:=\{v-\lambda u=0, \ \gamma_1u+\gamma_2v+\gamma_3w=0\}\subset\tilde\IP\]
is precisely the blow up of $\IP^2$ at $Z_\lambda:=C_3\cap C'_{\lambda}$, defined via the natural map
\[ \begin{tikzcd}
 \ke  \rar[->>] &\kl (C'_{\lambda}+C_3)\ki_{Z_\lambda}
\end{tikzcd}\]
which then induces
\[ S_\lambda=\Proj_S \Sym^\bullet \ki_{Z_\lambda} \into \Proj_S\Sym^\bullet\ke(-4)= \IP.\]
Then we have $C_\lambda=S_\lambda\cap T$, given by the equations: 
\[ v-\lambda u=0, \ \gamma'_\lambda  u + \gamma_3 w = 0 \text{ and }   u(g_2 u^2 + g_1uw + g_0 w^2)=0,\]
with  $g_i\in H^0(\IP^2,\ko(i))$. Putting $B= \{u = v = \gamma_3 = 0\}\subset T$, which is sent by $f$ biholomorphically to $C_3$,  we have $C_\lambda = B + E_\lambda$, where $E_\lambda$ is defined by the equations 
\[ v - \lambda u =  \gamma'_\lambda  u + \gamma_3 w= g_2 u^2 + g_1uw + g_0 w^2=0.\]

Over the locus where $\gamma_3\neq 0$, these become:
\[v = \lambda u, \;  w = -\frac{\gamma'_\lambda}{\gamma_3} u \text{ and }  g_2  - \frac{\gamma'_\lambda g_1}{\gamma_3} + \frac{{\gamma'_\lambda}^2g_0}{\gamma_3^2} = 0.\]
In other words, $E_\lambda\subset S_\lambda$ is the strict transform of the plane quartic
 \[  Q_\lambda:=Z({\gamma_3^2} g_2  - {\gamma'_\lambda}{\gamma_3}g_1 + {\gamma'_\lambda}^2g_0)\subset S.\]
For generic $g_i$ and $\lambda$, $Q_\lambda$ has singularities precisely at $Z_\lambda$, which are two double points. Since moreover $E_\lambda$ intersects the exceptional divisor of $S_\lambda$
\[E=\{ \gamma'_\lambda=\gamma_3=0\}\subset S_\lambda\]
in two points over each $P\in Z_\lambda$, it follows that $E_\lambda$ is smooth and is the normalisation of $Q_\lambda$. Since $g(Q_\lambda)=3$, the normalisation sequence gives
$g(E_\lambda)=g(Q_\lambda)-2=1$.

Finally we see that for a general $\lambda$, $B\cap E_{\lambda}=\emptyset$, so that we get 
\[0=g(B)=1+\frac{1}{2}B(B+K_T)=1+B^2,\] 
therefore $B$ is a $-1$ one curve and $T$ is the blow-up at one point of a minimal elliptic surface. In particular, $\kappa(T)=1$.
\end{exam}

\begin{exam}[The case $d = 1$, $(c_1, c_2, c_3) = (3,2,1)$]
From Proposition \ref{prop: triple planes non min of gen type} we see that $T$ is of general type with  $K_T^2 = 5$ and $p_g(T) = 4$, but not minimal, and from its proof we see that it contains a unique $(-1)$-curve. Thus its minimal model $T_\text{min}$ satisfies $K_{T_\text{min}} ^2 = 6$ and $p_g(T_\text{min}) = 6$ and such surfaces were classified by Horikawa in \cite{Horikawa78}.

Let us analyse the canonical map of $T$. 
In the usual notation we have $H^0(\omega_T ) = \langle xu, yu, zu, v\rangle$ and the curve $B = \{u = v = \gamma_3 = 0\}\subset T \subset \tilde \IP$ is the fixed part of the linear system. So, as in the previous example, $B$ is the unique $(-1)$-curve. 

Over the complement $U$ of the line $C_3$, we can invert the equation $\gamma_3$ and use \eqref{eq: eq for IP} to eliminate $w$, so that $u,v$ are global homogeneous coordinates on the fibres of $\IP|_U$. Then we get a birational map over $\IP^2$
\[ 
\begin{tikzcd}
 \IP|_{U}\arrow{rr}{(xu: yu: zu: v)}\arrow {dr}[swap]{\pi} && \IP^3\arrow[dashed]{dl}{\text{projection from $(0:0:0:1)$}}\\
 & \IP^2
\end{tikzcd}
\]
where every fibre of $\pi$ over a point in $U$ maps isomorphically  to a line through $(0:0:0:1)$ in $\IP^3$. 
In particular, the canonical map of $T$ maps birationally onto a surface in $\IP^3$ -- it is of type $Ia$ according to Horikawa's notation. 

One can actually obtain an explicit equation for the image in the following way: substituting $w = - \frac{ \gamma_1 u + \gamma_2 v}{z}$ into the defining equation \eqref{eq: etatilde} 
results in an equation
\[ \sum _{a+b =3} \frac{h_{ab}}{z^2}u^a v^b, \qquad \deg (h_{ab}) = 3+a\]
defining the surface in $\IP_U$. Multiplying by $z^2 u^3$ this becomes an equation of degree six  in the generators of the canonical linear system. 

Thus the image is  the corresponding sextic surface, which has a triple point at $(0:0:0:1)$. 
\end{exam}

 \subsection{K3 surfaces of genus 4 from special triple planes}\label{sect: K3 family }

  Let us consider the nearly split bundle $\ke$ on $S = \IP^2$ defined by $d = 1$ and the three curves
 \[ C_1 = Z( z^2 + q'(x,y) ),\; C_2 = Z( x), \; C_3 = Z(y).\]
 Let $p = C_2 \cap C_3$ and $L$ a line in the plane so that the exact sequence \eqref{eq: Serre seq for E} becomes
 \begin{equation}\label{eq: E for K3 triple plane}
 0 \to \ko_S(L+C_1) \to \ke \to \ki_p(L+C_2+C_3) \to 0.
  \end{equation}

By Theorem \ref{prop: triple planes not general type} the bundle is trivially admissible and corresponding triple covers  $f\colon T \to S$ satisfy $p_g (T)= 1$, $q (T)= 0 $ and $K_T^2 = -3$; they will turn out to be blown-up K3 surfaces.

We start by studying a particular linear system on the projective bundle $\tilde\IP= \IP(\ke)$, where we consider the line bundle 
\[\tilde \km:=\ko_{\tilde\IP}(1)(-2\pi^*L).\]
Note that 
\[\km:=\tilde \km|_{\IP} = \ko_{\IP}(1)(-2\pi^*L), \ \   \km|_T = \omega_T(f^*L).\]

\begin{lem}\label{lem:  IP is blow up in K3 case}
The linear system $|\tilde\km|$ has no base points. The corresponding morphism $\tilde\phi: \tilde\IP\rightarrow \IP^4$ is the blow-up of a line $l\in\IP^4$ with the exceptional divisor $\IP(\ko_S(L+C_2)\oplus\ko_S(L+C_3))\subset\tilde\IP$. Moreover, there is a quadric $Q\subset\IP^4$ containing $l$, such that the morphism $\phi:\IP\rightarrow\IP^4$  corresponding to $|\km|$ is the blow-up of $Q$ at $l$ with the exceptional divisor
$$Y:=\Proj_S \Sym^\bullet \ki_p \into \IP$$
and  and fits into a diagram 
\[ 
 \begin{tikzcd}
  \IP \arrow{rr}{\phi}\arrow{dr}[swap]{\pi} && Q \rar[hookrightarrow] \arrow[dashed]{dl}& \IP^4 \arrow[dashed]{dll}{\text{projection from $l$}}\\
  & \IP^2
 \end{tikzcd}.
\]
\end{lem}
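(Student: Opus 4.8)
The plan is to analyze the two linear systems $|\tilde\km|$ on $\tilde\IP$ and $|\km|$ on $\IP$ explicitly in coordinates, using the structure of $\ke$ and $\tilde\ke$ coming from the building data $d=1$, $C_1 = Z(z^2 + q'(x,y))$, $C_2 = Z(x)$, $C_3 = Z(y)$. First I would compute $\tilde\pi_*\tilde\km$. Since $\tilde\ke = \ko_S(L+C_1)\oplus\ko_S(L+C_2)\oplus\ko_S(L+C_3) = \ko_S(3L)\oplus\ko_S(2L)\oplus\ko_S(2L)$ (using $\deg C_1 = 2$, $\deg C_2 = \deg C_3 = 1$, $d = 1$), the projection formula gives $\tilde\pi_*\tilde\km = \tilde\pi_*(\ko_{\tilde\IP}(1))\tensor\ko_S(-2L) = \tilde\ke\tensor\ko_S(-2L) = \ko_S(L)\oplus\ko_S\oplus\ko_S$. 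Thus $h^0(\tilde\IP,\tilde\km) = 3 + 1 + 1 = 5$, with a basis $xu,\, yu,\, zu,\, v,\, w$ in the notation of Remark~\ref{rem: IP and tilde IP}. This linear system is base-point free because at every point of a fibre either $u\neq 0$ (so one of $xu,yu,zu$ survives, as $x,y,z$ have no common zero) or $v\neq 0$ or $w\neq 0$. I would then identify the resulting morphism $\tilde\phi\colon\tilde\IP\to\IP^4$: on the open set $\{u\neq 0\}$ it is an isomorphism onto the complement of a line, and it contracts the divisor $E = \{u = 0\} = \IP(\ko_S(L+C_2)\oplus\ko_S(L+C_3))\subset\tilde\IP$ onto the line $l = \{X_0 = X_1 = X_2 = 0\}\subset\IP^4$ (where $X_0,X_1,X_2,X_3,X_4$ are dual to $xu,yu,zu,v,w$). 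A Chern-class or direct fibre-degree computation identifies $\tilde\phi$ as the blow-up of $\IP^4$ along $l$ — e.g.\ one checks $\tilde\km^4 = 1$ and that $\tilde\phi$ is birational with the stated exceptional locus, which pins it down.

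Next I would restrict to $\IP = \IP(\ke)\subset\tilde\IP$, cut out by the equation \eqref{eq: eq for IP}, namely $\gamma_1 u + \gamma_2 v + \gamma_3 w = \gamma_1 u + x v + y w = 0$ (here $\gamma_2 = x$, $\gamma_3 = y$, and $\gamma_1$ is a section of $\ko_S(L)$ after a twist — more precisely the equation is a section of $\ko_{\tilde\IP}(1)\tensor\tilde\pi^*\kl^\vee$). Pushing forward, $\km|_\IP = \km$ with $\pi_*\km = \ke\tensor\ko_S(-2L)$, so from \eqref{eq: E for K3 triple plane} twisted by $-2L$ one gets $h^0(\IP,\km) = h^0(\ko_S(L)) + h^0(\ki_p\tensor\ko_S) = 3 + 0 = 3$? — no: one must twist \eqref{eq: E for K3 triple plane} by $\ko_S(-2L)$ giving $0\to\ko_S(L)\to\ke(-2L)\to\ki_p\to 0$, so $h^0 = 3 + h^0(\ki_p) = 3 + 0$; but we expect $4$ sections to see a blow-up of a quadric $Q\subset\IP^4$, so I would recheck the twist — in fact $h^0(\ki_p\tensor\ko_S) = h^0(\ki_p)$ is zero only if we forgot that $\ki_p$ sits inside $\ko_S$, so $h^0(\ki_p) = 0$ indeed, forcing $h^0(\km|_\IP)$ to be at most $3$; this discrepancy tells me the correct statement uses $\km = \tilde\km|_\IP$ and the restriction sequence $0\to\tilde\km(-\IP)\to\tilde\km\to\km\to 0$ where $\tilde\km(-\IP) = \ko_{\tilde\IP}(1)\tensor\tilde\pi^*\kl^\vee(-2L)\tensor\ko_{\tilde\IP}(-1)$ — wait, $\IP$ is cut by a section of $\ko_{\tilde\IP}(1)\tensor\tilde\pi^*\kl^\vee$, so $\tilde\km(-\IP) = \tilde\pi^*(\kl)(-2L) = \ko_S(L - 2L) = \ko_S(-L)$, which has $h^0 = h^1 = 0$, hence $h^0(\IP,\km) = h^0(\tilde\IP,\tilde\km) = 5$. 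Good — so $|\km|$ has $5$ sections and the base locus is $\Bs|\tilde\km|\cap\IP$, which is empty, giving $\phi\colon\IP\to\IP^4$.

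The image $\phi(\IP)$ is a quadric $Q$: since $\dim\IP = 3$ and $\phi$ has positive-dimensional fibres (the whole divisor $Y = \{$ certain locus over $p\}$ gets contracted), $\phi(\IP)$ is a hypersurface, and its degree equals $\km^3$ (pulled back), which I would compute using $\det\ke = \kl^2(C_1+C_2+C_3) = \ko_S(2L + 2L + L + L) = \ko_S(6L)$ — wait, $\kl = \ko_S(L)$ here with $d = 1$, $c_1 = 2$, $c_2 = c_3 = 1$, so $\det\ke = \ko_S((2\cdot 1 + 2 + 1 + 1)L) = \ko_S(6L)$; then $\km^3 = (\ko_\IP(1)(-2\pi^*L))^3$, expanded via $u^2 = u\pi^*c_1(\ke) - \pi^*c_2(\ke)$ as in the proof of Corollary~\ref{cor: invariants}, with $c_1(\ke) = 6L$, $c_2(\ke) = c_2(\ke)$ computed from Proposition~\ref{prop: Cherncl} — the arithmetic should give $\km^3 = 2$, confirming $Q$ is a quadric. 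One then checks $l\subset Q$ because $\tilde\phi(E)\subset\phi(\IP)$, and identifies $\phi$ as the blow-up of $Q$ along $l$ with exceptional divisor $Y$: the fibre of $\phi$ over a point of $l$ is $E\cap\IP$ over the corresponding point, which lies over $p = C_2\cap C_3\in S$, and $E\cap\IP = \IP(\ko_S(L+C_2)\oplus\ko_S(L+C_3))\cap\{\gamma_1 u + xv + yw = 0\}$; over $p$ we have $x = y = 0$ so the equation degenerates and the fibre jumps, realizing $Y = \Proj_S\Sym^\bullet\ki_p$ via the surjection $\ke\onto\ko_S(L+C_2+C_3)\ki_p$ from \eqref{eq: E for K3 triple plane} twisted appropriately. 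The diagram with the projection from $l$ then commutes by construction since $xu, yu, zu$ are the pullbacks of the coordinates on $\IP^2 = \IP(\ko_S(L))$ up to the twist.

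The main obstacle I expect is the bookkeeping with twists: keeping straight exactly which power of $\ko_{\tilde\IP}(1)$ and which pullback from $S$ appear in $\tilde\km$, in the equation of $\IP\subset\tilde\IP$, and in the exceptional divisors, so that the identifications of $\tilde\phi$ as the blow-up of $\IP^4$ along $l$ and of $\phi$ as the blow-up of $Q$ along $l$ come out cleanly. A secondary point requiring care is verifying that $\phi$ is genuinely the blow-up (not just a small modification or a finite-to-one map onto $Q$) — this needs checking that the exceptional divisor $Y$ is irreducible of the right dimension and that $\phi$ restricted to its complement is an isomorphism onto $Q\setminus l$, which follows from the corresponding statement for $\tilde\phi$ together with the fact that $\IP$ meets $E$ transversally away from the fibre over $p$. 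I would handle the latter by the explicit coordinate description on $\{u\neq 0\}$, where \eqref{eq: eq for IP} solves for one of $v,w$ in terms of the others and shows $\phi$ is an immersion there.
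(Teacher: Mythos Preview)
Your approach is largely the same as the paper's: compute $H^0(\tilde\IP,\tilde\km)$ via the splitting of $\tilde\ke$, write down the basis $xu,yu,zu,v,w$, identify $\tilde\phi$ as the blow-up of $\IP^4$ along $l=\{x_0=x_1=x_2=0\}$ with exceptional divisor $A=\{u=0\}$, and then use the restriction sequence $0\to\tilde\pi^*\ko_S(-L)\to\tilde\km\to\km\to 0$ (after your detour through the wrong sequence) to see that $H^0(\tilde\IP,\tilde\km)\cong H^0(\IP,\km)$, so $\phi=\tilde\phi|_\IP$.

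The one genuine difference is how you show $\phi(\IP)$ is a quadric. The paper does this explicitly: it multiplies the equation $\gamma_1 u+\gamma_2 v+\gamma_3 w=0$ of $\IP\subset\tilde\IP$ by $u$, obtaining a section of $\tilde\km^{\otimes 2}$ that reads $\hat z^2+q'(\hat x,\hat y)+\hat x\, v+\hat y\, w$ in the basis, hence a quadratic relation among the five sections on $\IP$. This immediately gives the equation of $Q$ and shows $l\subset Q$. You instead propose the intersection-number computation $\km^3=2$. That works too (and your Chern-class setup is correct: with $c_1(\ke)=6L$, $c_2(\ke)=10$ one indeed gets $\km^3=2$), but it requires first knowing that $\phi$ is birational onto its image so that $\deg\phi(\IP)=\km^3$; your sentence ``since $\phi$ has positive-dimensional fibres \ldots\ $\phi(\IP)$ is a hypersurface'' is not the right justification---positive-dimensional fibres over a proper closed set say nothing about the generic fibre. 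You should instead argue that $\tilde\phi$ is birational and $\IP$ is not contained in the exceptional divisor $A$, so $\phi=\tilde\phi|_\IP$ is birational onto a $3$-dimensional image. The paper's explicit-equation route sidesteps this and also hands you the quadric $Q$ concretely, which is used downstream (e.g.\ to see $Q$ is smooth in Proposition~\ref{prop: K3 construction}). Your identification of the exceptional divisor $A\cap\IP=\{u=0\}\cap\IP=Y=\Proj_S\Sym^\bullet\ki_p$ via the surjection $\ke\twoheadrightarrow\ki_p(L+C_2+C_3)$ matches the paper.
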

\begin{proof}
As before, we let $u$, $v$ and $w$ be the fiberwise coordinates on $\tilde\IP$ defined as in Remark~\ref{rem: IP and tilde IP}, and we note that we have the isomorphism
\[ 
 \begin{tikzcd}
 H^0(S,\ko_S(1))\oplus H^0(S,\ko_S)\oplus H^0(S,\ko_S)\arrow{r}{(u\tilde\pi^*,v,w)} & H^0(\tilde\IP,\tilde\km)
 \end{tikzcd}.
\]
Denoting by $\hat s=u\cdot\tilde\pi^*s$ for any $s\in H^0(S,\ko_S(1))$, it follows that $\hat x, \hat y, \hat z, v, w$ form a basis for $H^0(\tilde\IP,\tilde\km)$, and the corresponding rational morphism
 \[ \tilde\phi = (\hat x: \hat y : \hat z: v: w)\colon \tilde \IP \dashrightarrow \IP^4\]
 is  regular, since $\ko_S(1)$ is base point free and $\{u=v=w=0\}=\emptyset$. One notes in fact that $\tilde\IP$ is the blow-up of $\IP^4$ at $l:=\{x_0=x_1=x_2=0\}\subset\IP^4$, $\tilde\phi$ is the blow-up map and the exceptional divisor is given by 
 \[\tilde\IP\supset A:=\{u=0\}=\IP(\ko_S(L+C_2)\oplus \ko_S(L+C_3))\cong S\times \IP^1.\]
 Moreover, away from $l$, $\pi\circ\phi^{-1}$ is precisely the linear projection from $l$.

 Now, from the exact sequence
 \[ 0\rightarrow \ko_{\tilde\IP}(-\pi^*L)\rightarrow \tilde \km\rightarrow\km\rightarrow 0,\]
 one infers that the restriction induces an isomorphism $H^0(\tilde\IP,\tilde\km)\cong H^0(\IP,\km)$, so that the  rational map $\phi$ induced by $\km$ is precisely $\tilde\phi|_\IP$. Since $\IP\subset\tilde\IP$ is defined by the equation $\gamma_1u+\gamma_2v+\gamma_3w=0$, this gives a relation between the sections of $\km$ on $\IP$:
 \[\hat z^2+q'(\hat x,\hat y)+\hat x v+\hat y w=0\]
which implies that the image of $\phi=\tilde\phi|_\IP$ is contained in $Q:=Z(q)$ for 
$$q:=x_2^2-q'(x_0,x_1)+x_0x_3+x_1x_4.$$ 
In fact, since $\phi$ is generically a submersion and $Q$ is irreducible and $3$-dimensional, it follows that the image of $\phi$ is precisely $Q$. Finally,  as $\IP$ is irreducible, we conclude that $\IP$ is the strict transform of $Q$ under $\tilde\phi$, and since $Q$ contains $l$,  $\IP$ must be the blow-up of $Q$ at $l$, with the exceptional divisor
\[A\cap\IP=\{u=0, \ \gamma_1u+\gamma_2v+\gamma_3w=0\}=Y.\qedhere\] 
\end{proof}

 \begin{lem}\label{lem: comparing linear systems in K3 case}
  The map $\phi$ induces a surjection and an isomorphism
  \[ 
  \begin{tikzcd}
H^0(\IP^4, \ko_{\IP^4}(3))\rar[->>] & H^0(Q, \ko_Q(3)) \rar{\phi^*}[swap]{\isom}& H^0(\IP, \ko_{\IP}(3) \tensor \pi^*\det \ke^\vee).   
  \end{tikzcd}
\]
In particular, every triple cover $f\colon T \to S$ with Tschirnhausen bundle $\ke$ is the pullback via $\phi$ of a cubic section of $Q$ not containing the line $l$. 

  \end{lem}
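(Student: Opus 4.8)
The statement compares global sections of $\ko(3)$ on three spaces: $\IP^4$, the quadric threefold $Q$, and the $\IP^1$-bundle $\IP$. The plan is to treat the two maps in the displayed composition separately. The surjectivity $H^0(\IP^4,\ko_{\IP^4}(3))\onto H^0(Q,\ko_Q(3))$ is standard: since $Q$ is a quadric hypersurface, the restriction sequence $0\to\ko_{\IP^4}(1)\to\ko_{\IP^4}(3)\to\ko_Q(3)\to 0$ (twisting the ideal sequence $0\to\ko_{\IP^4}(-2)\to\ko_{\IP^4}\to\ko_Q\to 0$ by $\ko(3)$) gives surjectivity on $H^0$ because $H^1(\IP^4,\ko_{\IP^4}(1))=0$. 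So the real content is the isomorphism $\phi^*\colon H^0(Q,\ko_Q(3))\isom H^0(\IP,\ko_\IP(3)\tensor\pi^*\det\ke^\vee)$.

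For that isomorphism I would first identify the line bundle on the left that $\phi^*$ actually pulls back. By Lemma~\ref{lem:  IP is blow up in K3 case}, $\phi\colon\IP\to Q$ is the blow-up of $Q$ along the line $l$, with exceptional divisor $Y=\Proj_S\Sym^\bullet\ki_p$, and $\km=\ko_\IP(1)(-2\pi^*L)=\phi^*\ko_Q(1)$. Hence $\phi^*\ko_Q(3)=\km^{\tensor 3}=\ko_\IP(3)(-6\pi^*L)$. Using $\det\ke=\kl^2(C_1+C_2+C_3)$ from Proposition~\ref{prop: Cherncl} with $\kl=\ko_S(L)$ and $C_1+C_2+C_3\in|4L|$, we get $\det\ke=\ko_S(6L)$, so $\pi^*\det\ke^\vee=\ko_\IP(-6\pi^*L)$ and therefore $\ko_\IP(3)\tensor\pi^*\det\ke^\vee=\km^{\tensor 3}$ exactly. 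So the claim is that $\phi^*\colon H^0(Q,\ko_Q(3))\to H^0(\IP,\km^{\tensor 3})$ is an isomorphism.

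For the blow-up $\phi\colon\IP\to Q$ of a line $l$ in a smooth threefold, one has $\phi_*\ko_\IP=\ko_Q$ and $R^i\phi_*\ko_\IP=0$ for $i>0$, so by the projection formula $\phi_*\km^{\tensor 3}=\phi_*\phi^*\ko_Q(3)=\ko_Q(3)$ and $R^i\phi_*\km^{\tensor 3}=0$ for $i>0$; the Leray spectral sequence then yields $H^j(\IP,\km^{\tensor 3})\isom H^j(Q,\ko_Q(3))$ for all $j$, in particular the desired isomorphism on $H^0$. Alternatively, and perhaps more in the spirit of the paper, one can argue directly on $\IP$: push forward $\km^{\tensor 3}=\ko_\IP(3)(-6\pi^*L)$ by $\pi$, using that $R^1\pi_*\ko_\IP(3)=0$ and $\pi_*\ko_\IP(3)=\Sym^3\ke$, to get $\pi_*\km^{\tensor 3}=\Sym^3\ke\tensor\det\ke^\vee$, and compare with the analogous pushforward computation for $\ko_Q(3)$ along the composite $\IP\to Q\dashrightarrow\IP^2$; but the Leray argument over $\phi$ is cleaner.

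The main obstacle is verifying $R^i\phi_*\km^{\tensor 3}=0$ for $i>0$ cleanly, i.e. that no higher cohomology is introduced along the exceptional divisor $Y$; this is where one uses that $\km^{\tensor 3}=\phi^*(\text{something})$ has no negative twist along the fibres of $Y\to l$, so the usual vanishing for blow-ups of smooth centres (or a direct computation with the normal bundle of $l$ in $Q$) applies. Once the isomorphism is in place, the final sentence of the statement is immediate: a triple cover $T\to S$ with Tschirnhausen bundle $\ke$ is cut out in $\IP$ by a section $\eta\in H^0(\IP,\ko_\IP(3)\tensor\pi^*\det\ke^\vee)=H^0(\IP,\km^{\tensor 3})$, which is therefore $\phi^*$ of a unique cubic section of $Q$; and this cubic cannot contain $l$, for otherwise $\phi^*$ of it would vanish identically on the exceptional divisor $Y$, contradicting that $\eta$ has the right codimension (it must be non-zero on the generic fibre of $\pi$, and $Y$ dominates $\IP^2$).
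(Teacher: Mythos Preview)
Your argument for the isomorphism $H^0(Q,\ko_Q(3))\isom H^0(\IP,\km^{\tensor 3})$ is correct and takes a genuinely different route from the paper. The paper proceeds via a commutative diagram that compares three short exact sequences: the ideal sequence of $Q\subset\IP^4$, the ideal sequence of $\IP\subset\tilde\IP$ (both twisted appropriately), and the twist of the symmetric-power sequence \eqref{eq: symmetric power of nearly split sequence} on $S$, identifying the columns via $\tilde\phi$ and via pushforward to $S$. Your approach instead identifies $\phi^*\ko_Q(1)=\km$ and invokes $R\phi_*\ko_\IP=\ko_Q$ for the blow-up together with the projection formula and Leray. This is cleaner and more conceptual; the paper's route has the advantage of staying entirely within the explicit bundle calculus already set up and of not needing the blow-up vanishing as a black box, but either works.

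There is, however, a gap in your last sentence. Your argument that the cubic cannot contain $l$ reads: if it did, $\phi^*$ of it would vanish on $Y$, ``contradicting that $\eta$ has the right codimension (it must be non-zero on the generic fibre of $\pi$, and $Y$ dominates $\IP^2$)''. But $Y$ meets the \emph{generic} fibre of $\pi$ only in a single point, so $\eta$ vanishing on $Y$ is perfectly compatible with $\eta$ being non-zero on the generic fibre; no contradiction arises from the generic fibre. The correct obstruction is over the special point $p=C_2\cap C_3$: there the equation $\gamma_1 u+\gamma_2 v+\gamma_3 w=\gamma_1 u$ of $\IP\subset\tilde\IP$ reduces to $u=0$, so the entire fibre $\pi^{-1}(p)$ lies inside $Y$. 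Hence if $\eta$ vanishes on $Y$, it vanishes on all of $\pi^{-1}(p)$, violating the right-codimension condition at $p$. This is exactly how the paper argues the point.
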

\begin{proof}
 The first part follows from the diagram
 \begin{small}\[ 
  \begin{tikzcd}
H^0(\IP^4,\ko_{\IP^4}(1)) \dar[hookrightarrow]\rar{\isom} &H^0(\tilde\IP,\ko_{\tilde\IP}(-5\tilde\pi^*L)) \rar{\isom}\dar[hookrightarrow]& H^0(S,\Sym^2\tilde \ke (-5L)   ) \dar[hookrightarrow]\\
   H^0(\IP^4,\ko_{\IP^4}(3)) \dar[->>] \rar{\isom}[swap]{\tilde\phi^*} & H^0(\tilde\IP,3\tilde\km) \dar \rar{\isom} & H^0(S,\Sym^3 \tilde \ke( -6L))\dar[->>] \\
   H^0(Q, \ko_Q(3)) \rar{\phi^*} & H^0(\IP,3\km)  \rar{\isom} & H^0(\Sym^3\ke(-6L)) 
  \end{tikzcd}, \]
  \end{small}
where the last column comes from the  appropriate twist of \eqref{eq: symmetric power of nearly split sequence}.

Now the group on the right hand side contains the sections defining triple covers $T$ with Tschirnhausen bundle $\ke$. If the corresponding cubic section contained the line $l$, then its pullback $T$ would contain the divisor $Y$ and with that the full fibre over $p$, which cannot happen. 
\end{proof}

 \begin{prop}\label{prop: K3 construction}
  Let $f\colon T\to \IP^2$ be a special triple plane with Tschirnhausen bundle $\ke$ as above. Then there is a commutative diagram
  \[
   \begin{tikzcd}
{} & T_0 \arrow[hookrightarrow]{r} & Q \arrow[hookrightarrow]{r} & \IP^4\arrow[dashed]{ddll}{\pi_l}   \\
T \arrow{ur}{\phi_T}
\rar[hookrightarrow]\arrow{dr}[swap]{f} & \IP(\ke)\dar{\pi} \arrow{ur}{\phi}
\\
& \IP^2
   \end{tikzcd}
  \]
where
\begin{enumerate}
\item $Q$ is a smooth quadric threefold;
 \item $\pi_l$ is the projection  from a line $l\subset Q$;
 \item $T_0= Q\cap R$ is the complete intersection of $Q$ with a cubic hypersurface $R$ not containing $l$;
 \item $\phi$ is the blow up of $l$ in $Q$;
 \item $\phi_T$ is the blow up at $l\cap R = l \cap T_0$, i.e., at three points if $R$ is sufficiently general.
\end{enumerate}
If $T$, respectively $R$, is sufficiently general, then $T_0$ is a smooth K3 surface with a polarisation of degree $6$, and $T$ is its blow up in three points. 
\end{prop}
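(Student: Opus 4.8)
The strategy is to assemble the diagram from the two preceding lemmas and then extract the geometric statements one by one. First I would invoke Lemma \ref{lem: IP is blow up in K3 case} to obtain the quadric threefold $Q\subset\IP^4$, the blow-up map $\phi\colon\IP\to Q$ of the line $l$, and the identification of $\pi_l\circ\phi$ with $\pi$ away from the exceptional locus; in particular the diagram commutes by construction. From the explicit equation $q=x_2^2-q'(x_0,x_1)+x_0x_3+x_1x_4$ one checks directly that $Q$ is smooth for generic $q'$ — the partials are $-\partial q',\ -\partial q',\ 2x_2,\ x_0,\ x_1$, which have no common zero — so $(i)$ holds, and $(ii)$, $(iv)$ are then immediate restatements of the lemma. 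For $(iii)$, Lemma \ref{lem: comparing linear systems in K3 case} says that the section $\eta$ defining $T$ is the pullback under $\phi$ of a cubic hypersurface $R$ on $Q$ not containing $l$; since $\phi$ is an isomorphism over $Q\setminus l$ and $T$ is the strict transform of $T_0:=Q\cap R$, and since $T_0$ is a $(2,3)$ complete intersection in $\IP^4$ we get $T_0$ a K3-type surface.

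Next I would identify $\phi_T=\phi|_T$ as a blow-up. Over $Q\setminus l$ the map $\phi$ is an isomorphism, so $\phi_T$ is an isomorphism over $T_0\setminus(l\cap R)$; over a point of $l$ lying on $T_0$, the fibre of $\phi$ is a $\IP^1$ (an $A$-fibre in the notation of Lemma \ref{lem: IP is blow up in K3 case}), and since $T$ does not contain the exceptional divisor $Y$ — this is exactly the content of the last line of Lemma \ref{lem: comparing linear systems in K3 case} — $T$ meets each such fibre in a finite scheme, generically one reduced point, so $\phi_T$ contracts no curve over $Q\setminus(l\cap R)$. Over a point $p_i\in l\cap R$, a local computation shows $T$ meets the $\IP^1$-fibre $A_{p_i}$ in a single reduced point when $R$ is general, so each such fibre is not contained in $T$; rather, the exceptional curve of $\phi_T$ over $p_i$ is the strict transform in $T$ of the fibre of $\phi$, which is a $(-1)$-curve by adjunction since $\omega_T=\km(-f^*L)|_T$ is the pullback of $\ko_{T_0}$ twisted appropriately. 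The count $\#(l\cap R)=3$ follows from Bézout: $R$ is a cubic and $l\cong\IP^1$, so for $R$ general the intersection is three reduced points. This gives $(v)$.

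Finally, to see that $T_0$ is a smooth K3 surface with a degree $6$ polarisation: smoothness of $T_0=Q\cap R$ for general $R$ follows from Bertini applied to the linear system $|\ko_Q(3)|$, which is base-point free on the smooth threefold $Q$; alternatively, by Lemma \ref{lem: comparing linear systems in K3 case}, smoothness of $T$ (which holds for general $\eta$ since $\ke$ is trivially admissible by Theorem \ref{prop: triple planes not general type}) pushes down to smoothness of $T_0$ away from $l$, and a direct check at the three points $l\cap R$ completes it. By adjunction on $\IP^4$, $\omega_{T_0}=\omega_{\IP^4}(Q+R)|_{T_0}=\ko_{\IP^4}(-5+2+3)|_{T_0}=\ko_{T_0}$, and $H^1(\ko_{T_0})=0$ follows from the Koszul resolution of $T_0$ in $\IP^4$ together with Bott vanishing, so $T_0$ is a K3 surface; the polarisation $\ko_{T_0}(1)$ has degree $\ko_{\IP^4}(1)^2\cdot Q\cdot R=2\cdot3=6$. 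Since $\phi_T\colon T\to T_0$ is the blow-up at the three points $l\cap R$ as just shown, $T$ is the blow-up of $T_0$ in three points, and the invariants $p_g(T)=1$, $q(T)=0$, $K_T^2=0-3=-3$ are consistent with Theorem \ref{prop: triple planes not general type}.

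The main obstacle I anticipate is the local analysis at the three points of $l\cap R$: one must verify carefully that $T$ meets each exceptional $\IP^1$-fibre $A_{p_i}$ of $\phi$ transversally in a single point for $R$ general — so that $\phi_T$ is genuinely a blow-up there and not, say, a small contraction or a non-reduced phenomenon — and that the resulting exceptional curves in $T$ are disjoint $(-1)$-curves. This requires choosing local coordinates on $\tilde\IP$ near a point of $A$ over $p$, writing out the restriction of the cubic section, and checking the genericity conditions on $R$; everything else is a formal consequence of the two lemmas and standard adjunction.
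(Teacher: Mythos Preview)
Your strategy matches the paper's exactly: the paper's proof is a single sentence deferring to Lemmas~\ref{lem: IP is blow up in K3 case} and~\ref{lem: comparing linear systems in K3 case}, the properties of blow-ups, and the standard fact that a $(2,3)$-complete intersection in $\IP^4$ is a K3 of degree $6$. Your expansion of items $(i)$--$(iv)$ and of the K3 conclusion is fine.

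The passage analysing $\phi_T$ near the points of $l\cap R$, however, is wrong. You assert that $T$ meets each exceptional $\IP^1$-fibre of $\phi$ over $p_i\in l\cap R$ in a single reduced point, and you deduce this from the fact that $T$ does not contain $Y$. Neither the claim nor the deduction is correct. The isomorphism in Lemma~\ref{lem: comparing linear systems in K3 case} is pullback by $\phi$ (since $\km=\phi^*\ko_Q(1)$), so $\eta=\phi^*s$ and hence $T=Z(\eta)=\phi^{-1}(T_0)$ is the \emph{total} transform. In particular $T\cap Y=(\phi|_Y)^{-1}(l\cap R)$ consists of the three full $\IP^1$-fibres, and these fibres are \emph{contained in} $T$; they are the exceptional curves of $\phi_T$. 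That $T\not\supset Y$ only says $s|_l\not\equiv0$, i.e.\ $R\not\supset l$; it does not prevent $T$ from containing finitely many fibres of $Y\to l$. Your phrase ``strict transform in $T$ of the fibre of $\phi$'' does not make sense, as the fibre already lives in $\IP$.

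The correct local argument is simpler than the one you anticipate as an obstacle. For general $R$ the surface $T_0$ meets $l$ transversally in three points; in local coordinates on $Q$ with $l=\{x=y=0\}$ and $T_0=\{z=0\}$, the total transform $\phi^{-1}(T_0)=\{z=0\}$ in the blow-up is visibly smooth and is the blow-up of $T_0$ at the origin. This is the ``properties of the blow up'' the paper invokes, and no finer local computation is needed.
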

\begin{proof}
 All statements follow immediately from Lemma \ref{lem:  IP is blow up in K3 case}, Lemma \ref{lem: comparing linear systems in K3 case} and the properties of the blow up once we note that a general  complete intersection of a quadric and a cubic in $\IP^4$ is indeed a K3 surface with a very ample polarisation of degree $6$. 
\end{proof}

 \begin{rem}
Let $T_0$ be a K3 surface containing a very ample line divisor $D$ with $D^2 = 6$. Possibly changing $D$ in the linear system, we may assume that $D$ is a smooth curve of genus $4$. 

It is well known that $|D|$ embeds $T_0 = Q\cap R$ as the complete intersection of a quadric and a cubic in $\IP^4$ and that $D$ is embedded into a hyperplane $\IP^3\subset \IP^4$ via its canonical embedding. 

We have seen above that  a blow up of $T_0$ is a special triple plane if and only if the quadric $Q$ is smooth. 
By \cite[p.~206]{ACGH} this depends on the geometry of $D$ in the following way: recall that a $g_3^1$ on $D$ is a linear system of degree $3$ with at least two sections. Since $D$ is canonically embedded and thus not hyperelliptic, any $g_3^1$ on $D$ is base point free and makes $D$ into a trigonal curve. In the case of a curve of genus $4$ each $g_3^1$ comes from he projection from a line  the quadric $D \subset Q \cap \IP^3$. So either we have two distinct $g^1_3$'s if $Q\cap \IP^3$ is smooth or we have a unique $g^1_3$ if $Q\cap \IP^3$ is a quadric cone.

Summarising we have found that $T_0$ arises as in Proposition \ref{prop: K3 construction} if and only if the general element  $|D|$ carries two distinct $g^1_3$'s and the map $\phi_T \colon T \to T_0$ blows up one of the $g^1_3$'s, considered as points on $T_0$. 
 \end{rem}

 \begin{rem}
  If $Q$ is singular, then the projection from a line $l\subset Q\subset \IP^4$ still defines, after blow up, a triple cover
  \[ 
   \begin{tikzcd}
    T \rar{\phi_T} \arrow{dr}[swap]{f} & T_0\dar[dashed]\\
     & \IP^2
   \end{tikzcd}
.
  \]
  However, in this case the Tschirnhausen bundle is not nearly split.
 \end{rem}


 \end{document}